\documentclass[11pt]{article}
\usepackage{geometry}
\geometry{left=2.5cm,right=2.5cm,top=2.5cm,bottom=2.5cm}
\usepackage{amssymb}
\usepackage{latexsym}
\usepackage{amsmath}
\usepackage{indentfirst}
\usepackage{graphicx}
\usepackage[colorlinks=true]{hyperref}
\usepackage{mathrsfs}
\usepackage{color}
\usepackage{extarrows}

\usepackage{tikz}


\newtheorem{Thm}{Theorem}[section]

\newtheorem{Lem}{Lemma}[section]
\newtheorem{Pro}{Proposition}[section]

\newcommand{\R}{\mathbb{R}}

\numberwithin{equation}{section} \numberwithin{figure}{section}

\newenvironment{proof}{\medskip\par\noindent{\bf Proof\/}.\quad}{\qquad
\raisebox{-0.5mm}{\rule{2.5mm}{2.5mm}}\vspace{7pt}}

\begin{document}
\title{Nonlocal problems with Hardy-Littlewood-Sobolev critical exponent and Hardy potential}

\author{\quad Guangze Gu$^{1,2}$,\quad Aleks Jevnikar$^{1}$ \\
\footnotesize{\em
1.  Department of Mathematics, Computer Science and Physics, University of Udine, Via delle }\\
\footnotesize{\em
 Scienze 206, 33100 Udine, Italy.}\\
\footnotesize{\em
2. Department  of  Mathematics, Yunnan  Normal  University, Kunming 650500, China. }\\
\footnotesize{Email: guangzegu@163.com, aleks.jevnikar@uniud.it}
}

\date{}

\date{} \maketitle
\begin{abstract}
We are concerned with a Brezis-Nirenberg type problem for a
critical Choquard equation, in the sense of Hardy-Littlewood-Sobolev inequality, and with the Hardy potential in a smooth bounded domain. By exploiting variational methods we obtain existence results, which extend to different perturbation terms. Some estimates of independent interest about a nonlocal minimization problem are also derived.
\end{abstract}

\vspace{6mm} \noindent{\bf Keywords:} Choquad equation; Hardy potential; Critical exponent; Variational method.

\vspace{6mm} \noindent
{\bf 2010 Mathematics Subject Classification.} 35J20, 35J25, 35J60.

\section{Introduction}

In this paper, we consider the following nonlocal problem
\begin{equation}\label{eq1.1}
\left\{\begin{array}{ll}
-\Delta u -\mu\frac{u}{|x|^2}=
\Big( \displaystyle{\int_{\Omega}} \frac{|u(y)|^{2_{\alpha}^*} }{|x-y|^{\alpha}}dy\Big) |u|^{{2_{\alpha}^*}-1}+\lambda f(u),     \,  &x\in  \Omega, \\
u =0,    \, &  x\in \partial \Omega,
\end{array} \right.
\end{equation}
where $\Omega\subset\mathbb{R}^N(N\geq3)$ is a smooth bounded domain with smooth boundary $\partial \Omega$, $0\in \Omega$, $0<\alpha<N-(N-4)_+$, $0<\mu<\bar{\mu}=\frac{(N-2)^2}{4}$, $2_{\alpha}^*:=\frac{2N-\alpha}{N-2}$ is the upper critical exponent in the sense of the Hardy-Littlewood-Sobolev inequality. Moreover, the problem features the  so-called Hardy potential.
This potential  arises in many physical contexts such as  nonrelativistic quantum mechanics,  molecular physics and  quantum cosmology (see \cite{Felli-Marchini-Terracini2007JFA}, \cite{Frank-Land-Spector1971RMP}, \cite{Landau-Lifshitz1985Book} ). Since the Hardy potential does not belong to Kato class (\cite{Reed-Simon1978Book}), equation \eqref{eq1.1} can  be seen as doubly critical problem, which brings some new difficulties and analytical challenges, especially with respect ot the lack of compactness.

When $f(u)=\lambda u$ and $\alpha \to N$, equation \eqref{eq1.1} reduces to the following elliptic problem
\begin{equation}\label{eq1.3}
\left\{\begin{array}{ll}
-\Delta u -\mu\frac{u}{|x|^2}=
|u|^{2^*-1}+\lambda u,     \,  &x\in  \Omega, \\
u(x) =0,    \, &  x\in \partial \Omega,
\end{array} \right.
\end{equation}
where $2^*:=\frac{2N}{N-2}$ is the critical Sobolev exponent.
The existence of solution of equation \eqref{eq1.3} for $\mu<\bar{\mu}$ depends  on the spectrum $\sigma_\mu$ of the operator $-\Delta u -\frac{\mu}{|x|^2}$ with Dirichlet boundary conditions.
Jannelli \cite{Jannelli1999JDE} proved that equation \eqref{eq1.3} possesses  at least one positive solution provided that $\mu\in(0 ,\bar{\mu}-1]$ and $\lambda\in (0,\lambda_1)$ or $\Omega=B(0,1)$ is a ball, $\mu\in (\bar{\mu}-1, \bar{\mu}) $  and $\lambda\in (\lambda^*, \lambda_1)$, where $\lambda^*\in (0, \lambda_1)$  is a suitable constant depending on $\mu$ and  $\lambda_1$ is the first eigenvalue of the operator $-\Delta u -\frac{\mu}{|x|^2}$ with zero Dirichlet boundary condition.
Ferrero and Gazzola \cite{Ferrero-Gazzola2001JDE} showed that equation \eqref{eq1.3} has at least one positive solution for  $N\geq4$, $\lambda>0$ and $\lambda\notin \sigma_\mu$; if $\mu \geq0$,  $\mu\in (\bar{\mu}-1, \bar{\mu})$ there exists $\lambda_k\in \sigma_\mu$ such that $\lambda\in (\lambda_k -S_\mu|\Omega|^{-2/N},\lambda_k)$ then equation \eqref{eq1.3} has $\nu_k$  pairs of nontrivial solutions, where  $\nu_k$ denotes the multiplicity of $\lambda_k$ and
$$S_{\mu}:=\inf_{u\in D^{1,2}(\mathbb{R}^N)\backslash \{0\} } \frac{\int_{\mathbb{R}^N}|\nabla u|^2dx -\mu\int_{\mathbb{R}^N}\frac{u^2}{x^2} dx
}{
\Big(\int_{\mathbb{R}^N}|u|^{2^*}dx\Big)^{2/2^*}}.$$
Cao and Han \cite{Cao-Han2004JDE} proved that for $N\geq5$, $\mu\in[0 ,\bar{\mu}-(\frac{N+2}{N})^2]$ and $\lambda\in (0,\lambda_1)$, equation  \eqref{eq1.3} has  a nontrivial solution.
Cao and Yan \cite{Cao-Yan2010CVPDE} showed the existence of infinitely many solutions for $N\geq5$, $\mu\in[0 ,\bar{\mu}-4]$ and $\lambda\in (0,\lambda_1)$.
If $N\geq7$ and  $\mu\in[0 ,\bar{\mu}-4]$,
Cao and Peng \cite{Cao-Peng2003JDE} proved that  equation \eqref{eq1.3} admits at least a pair of sign-changing solutions for $\lambda\in (0,\lambda_1)$.
Chen and Zou \cite{Chen-Zou2012JDE} also considered equation \eqref{eq1.3} and showed  that for $N\geq7$ and  $\mu\in[0 ,\bar{\mu}-4]$, equation \eqref{eq1.3} has infinitely many sign-changing solutions for $\lambda >0$. Furthermore, if $\lambda\in[\lambda_n,\lambda_{n+1})$ for some $n\geq1$, equation \eqref{eq1.3} has a ground state solution.
Other results can be found in \cite{Cao-Peng2003PAMS}, \cite{Catrina-Wang2001}, \cite{Chen2003JDE}, \cite{Chou-Chu1993JLMS}, \cite{Egnell1989IUMJ}, \cite{Ekeland-Ghoussoub2002BAMS}, \cite{Ferrero-Gazzola2001JDE}, \cite{Garcia-Peral1998JDE}, \cite{Ruiz-Willem2003JDE}  and the references quoted therein. We also refer the interested reader to \cite{Brezis-Nirenberg1983CPAM}, \cite{Cerami-Solimini-Struwe1986JFA}, \cite{Cerami-Zhong-Zou2015CVPDE}, \cite{Ghoussoub-Yuan2000}, \cite{Ho-Perera-Sim2023} and the discussion in the sequel concerning the results of \eqref{eq1.3} in the case of $\mu=0$ where equation \eqref{eq1.1} reduces to the  well-known Br\'{e}zis-Nirenberg problem
\begin{equation}\label{eq1.2}
\left\{\begin{array}{ll}
-\Delta u=|u|^{2^*-1}+\lambda u,    \,  &x\in  \Omega, \\
u=0,    \, &  x\in \partial \Omega,
\end{array} \right.
\end{equation}
which has been extensively studied by many researchers after,  particularly,  the  celebrated papers \cite{Brezis-Nirenberg1983CPAM}  carried out by Br\'{e}zis and Nirenberg.

The nonlocal elliptic equation \eqref{eq1.1} is related to the following nonlinear Choquard equation
\begin{equation}\label{eq1.4}
-\Delta u+V(x)u=\Big( \int_{\mathbb{R}^N}\frac{F(u(y))}{|x-y|^\alpha}dy \Big)f (u(x)),
\end{equation}
where $0<\alpha<N$ and $F$ is the primitive of $f$.
Equation \eqref{eq1.4}  arises from the study of Bose-Einstein condensation and can be used to depict the finite range many-body interactions between particles.
When $N=3$, $\alpha=1$ and $f(u)=u$, equation \eqref{eq1.4}  was proposed by  Choquard \cite{Lieb1967SAM} as an approximation to Hartree-Fock theory for a one component plasma and was also introduced by Pekar \cite{Pekar1954} to describe the the quantum theory of a polaron at rest. See also \cite{Bahrami2014}, \cite{Choquard-Stubbe-Vuffray2008DIE}, \cite{Ghimenti-Liu-Tang2023}, \cite{Ghimenti-Pagliardini2019CVPDE}, \cite{Li-Yang-Zhou2022SCM}, \cite{Moroz-Van-Schaftingen2013JFA}, \cite{Tod-Moroz1999N}, and the references therein for more mathematical and physical background as well as the existence of solutions of equation \eqref{eq1.4}.

The appearance of the nonlocal term $\Big(|x|^{-\alpha} *F(u)\Big)f(u)$ gives rise to some mathematical difficulties for the study of Choquard equation, which has received increasing attention from many authors.
Recently, Gao and Yang in  \cite{Gao-Yang2018SCM} considered equation \eqref{eq1.1} with $\mu=0$ and $f(u)=\lambda u$,  that is, the following  Br\'{e}zis-Nirenberg type problem of the nonlinear Choquard equation
\begin{equation}\label{eq1.5}
\left\{\begin{array}{ll}
-\Delta u =
\Big( \displaystyle{\int_{\Omega}} \frac{|u(y)|^{2_{\alpha}^*} }{|x-y|^{\alpha}}dy\Big) |u|^{{2_{\alpha}^*}-1}+\lambda u,     \,  &x\in  \Omega, \\
u =0,    \, &  x\in \partial \Omega,
\end{array} \right.
\end{equation}
and obtained some existence results corresponding to the well-known results in \cite{Brezis-Nirenberg1983CPAM} by exploiting the concentration
properties of the Talenti-Aubin functions.
Later, they \cite{Gao-Yang2017JMAA} also established some existence and multiplicity results for \eqref{eq1.1} with $\mu=0$. In \cite{He2022JMAA} the author proved that \eqref{eq1.5} has infinitely many solutions by applying the truncation method. Yang, Ye and Zhao \cite{Yang-Ye-Zhao2023JDE} obtain the existence and asymptotic behavior of the solutions of \eqref{eq1.5} by using the Lyapunov-Schmidt reduction procedure. Pan, Wen and Yang \cite{Pan-Wen-Yang} consider the qualitative analysis
the blow-up solutions of  \eqref{eq1.5}.

Guo and Tang \cite{Guo-Tang2025arXiv} consider the following nonlocal problem with Hardy potential
\begin{equation}\label{eq1.6}
-\Delta u -\mu\frac{u}{|x|^2}=\Big(
\int_{\mathbb{R}^N} \frac{|u(y)|^{2_{\alpha}^*} }{|x-y|^{\alpha}}dy \Big) |u|^{2_{\alpha}^*-1}, ~~x\in\mathbb{R}^N.
\end{equation}
They proved  the existence and symmetry of solutions  by developing a suitable version of concentration-compactness lemma and the moving plane method. Up to a constant,  the  solutions of  equation \eqref{eq1.6} are the minimizers of the problem:
\begin{equation*}\label{eq2.3}
S_{H,\alpha}:=\inf_{u\in D^{1,2}(\mathbb{R}^N)\backslash \{0\} } \frac{\int_{\mathbb{R}^N}|\nabla u|^2dx -\mu\int_{\mathbb{R}^N}\frac{u^2}{|x|^2} dx
}{
\Big(\int_{\mathbb{R}^N}\int_{\mathbb{R}^N} \frac{|u|^{2_{\alpha}^*}(x)|u(y)|^{2_{\alpha}^*} }{|x-y|^{\alpha}}dxdy \Big)^{1/2_{\alpha}^*}},
\end{equation*}
where $D^{1,2}(\mathbb{R}^N)=\big\{u\in L^{2^*}(\mathbb{R}^N): \nabla u\in  L^{2}(\mathbb{R}^N) \big\}$. Assume that $u_{\mu}$ is a positive solutions of equation \eqref{eq1.6}, define
$$ u_{\mu,\varepsilon} =\varepsilon^{\frac{2-N}{2} }u_{\mu}\big(\frac{x}{\varepsilon}\big),$$
then $u_{\mu,\varepsilon}$ is also the positive solution of  equation \eqref{eq1.6}. In particular, due to the homogeneity of equation \eqref{eq1.6}, $u_{\mu,\varepsilon}$  are still the minimizers for $S_{H,\alpha}$ with
\begin{equation}\label{eq1.7}
\int_{\mathbb{R}^N}|\nabla u_{\mu,\varepsilon}|^2dx -\mu\int_{\mathbb{R}^N}\frac{|u_{\mu,\varepsilon}|^2}{|x|^2} dx=
\int_{\mathbb{R}^N}\int_{\mathbb{R}^N} \frac{|u_{\mu,\varepsilon}(x)|^{2_{\mu}^*}|u_{\mu,\varepsilon}(y)|^{2_{\mu}^*} }{|x-y|^{\alpha}}dxdy= S_{H,\alpha}^{ \frac{2N-\alpha}{N-\alpha+2}}.
\end{equation}
Furthermore, Guo and Tang \cite{Guo-Tang2025arXiv} obtained the asymptotic behavior of the extremal function  $u_\mu$.
More precisely, there exists positive constants $C_1$ and $C_2$, such that
\begin{equation}\label{eq1.8}
\begin{aligned}
\frac{C_1}{\big(|x|^{\gamma'/\sqrt{\bar{\mu}}} +|x|^{\gamma/\sqrt{\bar{\mu}}}\big)^{\sqrt{\bar{\mu}}}}\leq u_\mu(x)\leq \frac{C_2}{\big(|x|^{\gamma'/\sqrt{\bar{\mu}}} +|x|^{\gamma/\sqrt{\bar{\mu}}}\big)^{\sqrt{\bar{\mu}}}},
\end{aligned}
\end{equation}
where $N\geq3$, $0<\alpha<N-(N-4)_+$, $\mu<\bar{\mu}=\frac{(N-2)^2}{4}$, $\gamma=\sqrt{\bar{\mu}} +\sqrt{\bar{\mu}-\mu}$ and $\gamma'=\sqrt{\bar{\mu}} -\sqrt{\bar{\mu}-\mu}$.

\

Our first result is about $S_{H,\alpha}$ and its counterpart on bounded domains, for which we get the following estimates.
\begin{Thm}\label{Thm1.1}
\begin{itemize}
\item[$(1)$] Let $N\geq3$, $0<\alpha<N-(N-4)_+$, $0<\mu<\bar{\mu}=\frac{(N-2)^2}{4}$, then $S_{H,\alpha}$ satisfies
$$ \frac{1}{C(N,\alpha)^{1/2_{\alpha}^*}} S_{\mu}< S_{H,\alpha}<\frac{1}{C(N,\alpha)^{1/2_{\alpha}^*}} S,$$
where
$$S:=\inf_{u\in D^{1,2}(\mathbb{R}^N)\backslash \{0\} } \frac{\int_{\mathbb{R}^N}|\nabla u|^2dx }{\Big(\int_{\mathbb{R}^N} |u|^{2^*}dx\Big)^{2/2^*}} ~~\text{and}~~S_{\mu}:=\inf_{u\in D^{1,2}(\mathbb{R}^N)\backslash \{0\} } \frac{\int_{\mathbb{R}^N}|\nabla u|^2dx -\mu\int_{\mathbb{R}^N}\frac{u^2}{x^2} dx
}{
\Big(\int_{\mathbb{R}^N}|u|^{2^*}dx\Big)^{2/2^*}}.$$
\item[$(2)$]  Let $N\geq 4$, $0<\alpha<N-(N-4)_+$. If $\mu<0$, then $S_{H,\alpha}(\Omega)=\frac{1}{C(N,\alpha)^{1/2_{\alpha}^*}}S$ and is not attained for any $\Omega$,
where
$$S_{H,\alpha}(\Omega):=\inf_{u\in D_0^{1,2}(\Omega)\backslash \{0\} } \frac{\int_{\Omega}|\nabla u|^2dx -\mu\int_{\Omega}\frac{u^2}{x^2} dx
}{
\Big(\int_{\Omega}\int_{\Omega} \frac{|u|^{2_{\alpha}^*}(x)|u(y)|^{2_{\alpha}^*} }{|x-y|^{\alpha}}dxdy \Big)^{1/2_{\alpha}^*}}.$$
\end{itemize}
\end{Thm}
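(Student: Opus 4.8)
The plan is to reduce both statements to the sharp Hardy--Littlewood--Sobolev (HLS) inequality. Write $D(u):=\int_{\mathbb{R}^N}\int_{\mathbb{R}^N}\frac{|u(x)|^{2_{\alpha}^*}|u(y)|^{2_{\alpha}^*}}{|x-y|^{\alpha}}\,dx\,dy$ for the double-integral term appearing in the denominators of $S_{H,\alpha}$ and $S_{H,\alpha}(\Omega)$ (for $u\in D_0^{1,2}(\Omega)$ this is the same whether integrated over $\mathbb{R}^N$ or over $\Omega$). The sharp HLS inequality reads
\[
D(u)\ \le\ C(N,\alpha)\Big(\int_{\mathbb{R}^N}|u|^{2^*}\Big)^{2\cdot 2_{\alpha}^*/2^*}\qquad\text{for all }u\in D^{1,2}(\mathbb{R}^N),
\]
where $C(N,\alpha)$ is the sharp constant and equality holds \emph{only} for the Talenti--Aubin bubbles $c\,(t^{2}+|x-x_0|^{2})^{-(N-2)/2}$; raising to the power $1/2_{\alpha}^*$ this gives $D(u)^{1/2_{\alpha}^*}\le C(N,\alpha)^{1/2_{\alpha}^*}\big(\int_{\mathbb{R}^N}|u|^{2^*}\big)^{2/2^*}$. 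Since $0<\mu<\bar{\mu}$ makes the numerator $\int|\nabla u|^{2}-\mu\int u^{2}/|x|^{2}>0$ (Hardy), inserting this upper bound for the denominator of the Rayleigh quotient of $S_{H,\alpha}$ and recalling the definition of $S_\mu$ yields, for every $u\in D^{1,2}(\mathbb{R}^N)\setminus\{0\}$,
\[
\frac{\int|\nabla u|^{2}-\mu\int u^{2}/|x|^{2}}{D(u)^{1/2_{\alpha}^*}}\ \ge\ \frac{1}{C(N,\alpha)^{1/2_{\alpha}^*}}\cdot\frac{\int|\nabla u|^{2}-\mu\int u^{2}/|x|^{2}}{\big(\int|u|^{2^*}\big)^{2/2^*}}\ \ge\ \frac{S_\mu}{C(N,\alpha)^{1/2_{\alpha}^*}},
\]
hence $S_{H,\alpha}\ge C(N,\alpha)^{-1/2_{\alpha}^*}S_\mu$. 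Running the same computation on the Talenti bubble $U_0$ --- the extremal of $S$, for which HLS is an equality --- and using $\mu>0$ together with $0<\int U_0^{2}/|x|^{2}<\infty$, we get the \emph{strict} estimate
\[
S_{H,\alpha}\ \le\ \frac{\int|\nabla U_0|^{2}-\mu\int U_0^{2}/|x|^{2}}{C(N,\alpha)^{1/2_{\alpha}^*}\|U_0\|_{2^*}^{2}}\ <\ \frac{1}{C(N,\alpha)^{1/2_{\alpha}^*}}\cdot\frac{\|\nabla U_0\|_2^{2}}{\|U_0\|_{2^*}^{2}}\ =\ \frac{S}{C(N,\alpha)^{1/2_{\alpha}^*}},
\]
which is the upper bound in part $(1)$.

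For the strict lower bound in part $(1)$ I would use the fact recalled before the statement that $S_{H,\alpha}$ is attained, by a constant multiple of the solution $u_\mu$ of \eqref{eq1.6}. By the asymptotics \eqref{eq1.8}, $u_\mu(x)\sim|x|^{-\gamma'}$ as $x\to0$ with $\gamma'=\sqrt{\bar{\mu}}-\sqrt{\bar{\mu}-\mu}>0$, so $u_\mu$ is singular at the origin and hence is \emph{not} a Talenti bubble; therefore HLS applied to $u_\mu$ is a \emph{strict} inequality, i.e. $D(u_\mu)^{1/2_{\alpha}^*}<C(N,\alpha)^{1/2_{\alpha}^*}\|u_\mu\|_{2^*}^{2}$. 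Since the numerator at $u_\mu$ is positive and $S_\mu\le\big(\int|\nabla u_\mu|^{2}-\mu\int u_\mu^{2}/|x|^{2}\big)/\|u_\mu\|_{2^*}^{2}$, this gives
\[
S_{H,\alpha}=\frac{\int|\nabla u_\mu|^{2}-\mu\int u_\mu^{2}/|x|^{2}}{D(u_\mu)^{1/2_{\alpha}^*}}\ >\ \frac{1}{C(N,\alpha)^{1/2_{\alpha}^*}}\cdot\frac{\int|\nabla u_\mu|^{2}-\mu\int u_\mu^{2}/|x|^{2}}{\|u_\mu\|_{2^*}^{2}}\ \ge\ \frac{S_\mu}{C(N,\alpha)^{1/2_{\alpha}^*}},
\]
as claimed. (Estimate \eqref{eq1.8} with $0<\gamma'<\sqrt{\bar{\mu}}<\gamma$ also guarantees $u_\mu\in L^{2^*}(\mathbb{R}^N)$, $\nabla u_\mu\in L^{2}(\mathbb{R}^N)$, $u_\mu/|x|\in L^{2}(\mathbb{R}^N)$, so $u_\mu$ is an admissible competitor in both Rayleigh quotients.)

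For part $(2)$, when $\mu<0$ one has $\int|\nabla u|^{2}-\mu\int u^{2}/|x|^{2}\ge\int|\nabla u|^{2}$, so the same chain as above combined with the Sobolev inequality $\int_\Omega|\nabla u|^{2}\ge S\big(\int_\Omega|u|^{2^*}\big)^{2/2^*}$ on $\Omega$ gives $S_{H,\alpha}(\Omega)\ge C(N,\alpha)^{-1/2_{\alpha}^*}S$. For the matching upper bound I would use the standard concentration argument: fix $p\in\Omega\setminus\{0\}$, take a cut-off $\eta$ supported in a small ball around $p$, and set $v_\varepsilon=\eta\,\varepsilon^{(2-N)/2}U_0\big((x-p)/\varepsilon\big)$. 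Because $D(\cdot)$ is invariant under the bubble's scaling, dominated convergence gives $D(v_\varepsilon)\to D(U_0)=C(N,\alpha)\|U_0\|_{2^*}^{2\cdot 2_{\alpha}^*}$, while $\int|\nabla v_\varepsilon|^{2}\to\|\nabla U_0\|_2^{2}=S^{N/2}$ and $|\mu|\int v_\varepsilon^{2}/|x|^{2}\le c\int v_\varepsilon^{2}\to0$ since $|x|^{-2}$ is bounded on $\mathrm{supp}\,v_\varepsilon$ and $\int v_\varepsilon^{2}=O(\varepsilon^{2})$ for $N\ge5$ (resp. $O(\varepsilon^{2}|\log\varepsilon|)$ for $N=4$). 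Hence $S_{H,\alpha}(\Omega)\le C(N,\alpha)^{-1/2_{\alpha}^*}S$, so $S_{H,\alpha}(\Omega)=C(N,\alpha)^{-1/2_{\alpha}^*}S$. Finally, if this value were attained at some $u^{*}\in D_0^{1,2}(\Omega)\setminus\{0\}$, then in the chain
\[
\frac{S}{C(N,\alpha)^{1/2_{\alpha}^*}}=\frac{\int|\nabla u^{*}|^{2}-\mu\int (u^{*})^{2}/|x|^{2}}{D(u^{*})^{1/2_{\alpha}^*}}\ \ge\ \frac{\int|\nabla u^{*}|^{2}}{D(u^{*})^{1/2_{\alpha}^*}}\ \ge\ \frac{S}{C(N,\alpha)^{1/2_{\alpha}^*}}
\]
all inequalities are equalities, forcing $-\mu\int (u^{*})^{2}/|x|^{2}=0$; since $\mu<0$ and $0\in\Omega$ this means $u^{*}\equiv0$, a contradiction. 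Thus $S_{H,\alpha}(\Omega)$ is not attained.

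The only genuinely non-routine point is the strictness in part $(1)$: the bound $S_{H,\alpha}\ge C(N,\alpha)^{-1/2_{\alpha}^*}S_\mu$ is soft, and upgrading it to a strict inequality rests on the qualitative fact that the extremal $u_\mu$ cannot also be an HLS optimizer --- which one reads off from the singular behaviour of $u_\mu$ at the origin in \eqref{eq1.8}. Everything else follows from the HLS and Hardy inequalities, the identification of the extremal of $S$ with the Talenti bubble, and the textbook cut-off estimates for that bubble.
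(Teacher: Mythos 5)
Your argument follows the same overall strategy as the paper: soft lower bound via the sharp Hardy--Littlewood--Sobolev inequality combined with the definition of $S_\mu$, strict upper bound by testing the Rayleigh quotient with the Talenti bubble $\bar U$ (for which HLS is an equality) and using $\mu>0$, and for part $(2)$ a concentrating-bubble construction away from the origin together with $\mu<0$. However, you go genuinely further on one point, and it is worth flagging. Theorem~\ref{Thm1.1}$(1)$ claims the \emph{strict} inequality $C(N,\alpha)^{-1/2_\alpha^*}S_\mu<S_{H,\alpha}$, but the paper's Lemma~\ref{Lem2.2} is stated with $\leq$ and its proof only yields $\leq$; the strictness is never established in the text. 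Your supplementary argument closes that gap cleanly: since $S_{H,\alpha}$ is attained by $u_\mu$ (the minimizer from \cite{Guo-Tang2025arXiv}), and by \eqref{eq1.8} one has $u_\mu(x)\gtrsim|x|^{-\gamma'}$ with $\gamma'>0$ near the origin, $u_\mu$ is unbounded at $0$ and hence cannot be a Talenti bubble $A(a+|x-x_0|^2)^{(2-N)/2}$; as those are the only HLS extremizers, the HLS inequality is strict at $u_\mu$, which together with the definition of $S_\mu$ upgrades $\leq$ to $<$. This is correct and is an actual improvement over the written proof. On part $(2)$, concentrating the bubble at a fixed $p\in\Omega\setminus\{0\}$ (so that $|x|^{-2}$ is bounded on the support) is equivalent to the paper's device of translating $u_\varepsilon$ by $2\xi$; and your non-attainment argument, which forces $\int_\Omega (u^*)^2/|x|^2=0$ by saturating the chain of inequalities, is more explicit than the paper's one-line appeal to the non-attainability of $S$ on a proper domain. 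Two very minor remarks: the assertion that the cut-off does not affect the leading term of $D(v_\varepsilon)$ should be justified by the explicit tail estimates (as the paper does via \cite{Gao-Yang2018SCM}) rather than "dominated convergence," since the scaling of $D$ is not literally an $L^1$ convergence statement; and you should note that $a\neq0$ is forced in the HLS equality case by integrability of $|u|^{2^*}$, which is what makes the extremizers bounded and incompatible with the singularity of $u_\mu$.
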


\medskip

We next turn to the existence of nontrivial solution to  the following  equation,
\begin{equation}\label{eq1.9}
\left\{\begin{array}{ll}
-\Delta u -\mu\frac{u}{|x|^2}=
\Big( \displaystyle{\int_{\Omega}} \frac{|u(y)|^{2_{\alpha}^*} }{|x-y|^{\alpha}}dy\Big) |u(x)|^{{2_{\alpha}^*}-1}+\lambda u,     \,  &x\in  \Omega, \\
u(x) =0,    \, &  x\in \partial \Omega,
\end{array} \right.
\end{equation}
and we obtain the following conclusions.
\begin{Thm}\label{Thm1.2}
\begin{itemize}
\item[$(1)$] Let $N\geq3$, $0<\alpha<N$ and $0<\mu\leq\bar{\mu}$, Then, equation  \eqref{eq1.9} has a solution for all $\lambda\in (0, \lambda_1)$.
\item[$(2)$] Let $N\geq3$, $0<\alpha<N-(N-4)_+$ and $0<\mu\leq\bar{\mu}-1$, Then, equation  \eqref{eq1.9} has at least one nontrivial solution for all $\lambda\in (0, \lambda_1)$.
\end{itemize}
\end{Thm}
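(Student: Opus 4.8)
The plan is to run a Br\'{e}zis--Nirenberg type variational scheme adapted to the nonlocal Hardy setting. On $D_0^{1,2}(\Omega)=H_0^1(\Omega)$ consider the functional
\[
I_\lambda(u)=\frac12\int_\Omega\Big(|\nabla u|^2-\mu\frac{u^2}{|x|^2}\Big)dx-\frac{\lambda}{2}\int_\Omega u^2\,dx-\frac{1}{2\cdot 2_\alpha^*}\int_\Omega\!\int_\Omega\frac{|u(x)|^{2_\alpha^*}|u(y)|^{2_\alpha^*}}{|x-y|^\alpha}\,dx\,dy,
\]
which, by the Hardy--Littlewood--Sobolev inequality, is of class $C^1$ and whose critical points are exactly the weak solutions of \eqref{eq1.9}. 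Since $\lambda\in(0,\lambda_1)$, the quadratic form $u\mapsto\int_\Omega(|\nabla u|^2-\mu u^2/|x|^2)dx-\lambda\int_\Omega u^2\,dx$ is positive (and, for $\mu<\bar\mu$, an equivalent norm on $H_0^1(\Omega)$; the endpoint $\mu=\bar\mu$ in part~(1) is treated separately at the end). Because $2\cdot 2_\alpha^*>2$, one checks the mountain--pass geometry: $I_\lambda(0)=0$, $I_\lambda\ge\beta>0$ on a small sphere, and $I_\lambda(t\varphi)\to-\infty$ as $t\to+\infty$ for a fixed $0\le\varphi\in H_0^1(\Omega)$. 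Put $c_\lambda=\inf_{\gamma\in\Gamma}\max_{[0,1]}I_\lambda(\gamma)\ge\beta>0$.

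The first step is a local compactness statement: any Palais--Smale sequence for $I_\lambda$ at a level
\[
c<c^*:=\frac{N-\alpha+2}{2(2N-\alpha)}\,S_{H,\alpha}^{\frac{2N-\alpha}{N-\alpha+2}}
\]
admits a strongly convergent subsequence. I would obtain this from a Br\'{e}zis--Lieb type splitting for the nonlocal nonlinearity combined with a concentration--compactness argument in the spirit of Guo--Tang \cite{Guo-Tang2025arXiv} for problem \eqref{eq1.6}; on a bounded domain the only new feature is that the Hardy potential can force concentration exclusively at the single point $0\in\Omega$, which is absorbed by the very definition of $S_{H,\alpha}$. Together with the mountain--pass theorem this reduces the entire statement to proving $c_\lambda<c^*$.

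The core of the proof — and the main obstacle — is the strict estimate $c_\lambda<c^*$. Fix $\eta\in C_c^\infty(\Omega)$ with $\eta\equiv1$ near $0$, let $u_{\mu,\varepsilon}$ be the rescaled extremals of $S_{H,\alpha}$ satisfying \eqref{eq1.7}, and set $u_\varepsilon=\eta\,u_{\mu,\varepsilon}$. Using \eqref{eq1.7} and the two--sided bounds \eqref{eq1.8}, and recalling that $u_\mu(z)$ decays at infinity like $|z|^{-\gamma}$ with $\gamma=\sqrt{\bar\mu}+\sqrt{\bar\mu-\mu}>\sqrt{\bar\mu}=\tfrac{N-2}{2}$, the truncation error in $\|u_\varepsilon\|_\mu^2$ is $O(\varepsilon^{N-2})$, that in the nonlocal term is $O(\varepsilon^{\theta})$ for a suitable $\theta=\theta(N,\alpha,\mu)>0$, while the gain from the linear term satisfies
\[
\int_\Omega u_\varepsilon^2\,dx\ \gtrsim\ \begin{cases}\varepsilon^{\,2\sqrt{\bar\mu-\mu}},&\bar\mu-1<\mu<\bar\mu,\\[1mm]\varepsilon^{2}|\log\varepsilon|,&\mu=\bar\mu-1,\\[1mm]\varepsilon^{2},&0<\mu<\bar\mu-1,\end{cases}
\]
the trichotomy being dictated by whether $2\gamma>N$, $2\gamma=N$ or $2\gamma<N$. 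Optimizing $t\mapsto I_\lambda(tu_\varepsilon)$ yields $c_\lambda\le\max_{t\ge0}I_\lambda(tu_\varepsilon)\le c^*-c\,\lambda\int_\Omega u_\varepsilon^2\,dx+O(\varepsilon^{N-2}+\varepsilon^{\theta})$, so $c_\lambda<c^*$ as soon as the exponent $\sigma$ of $\int_\Omega u_\varepsilon^2$ satisfies $\sigma<\min\{N-2,\theta\}$. In part~(2) the hypothesis $0<\mu\le\bar\mu-1$ (nonempty only for $N\ge5$) gives $\sigma\le2<N-2$, and one verifies, using $0<\alpha<N-(N-4)_+$ — precisely the range in which \eqref{eq1.8} is available and $\theta$ is large enough — that also $\sigma<\theta$. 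In part~(1) one has, for every $0<\mu<\bar\mu$, $\sigma=\min\{2,2\sqrt{\bar\mu-\mu}\}$ and $2\sqrt{\bar\mu-\mu}<2\sqrt{\bar\mu}=N-2$, so the gain strictly beats the $O(\varepsilon^{N-2})$ error for all $N\ge3$ and all $0<\alpha<N$; the endpoint $\mu=\bar\mu$ is then reached by a limiting argument, solving the problems with $\mu_n\uparrow\bar\mu$, exploiting the uniform control of the mountain--pass levels together with the improved Hardy inequality on $\Omega$, and passing to a weak limit which is a nontrivial solution because its energy remains bounded away from $0$.

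In summary, after the standard mountain--pass set--up the two substantial difficulties are: (i) the concentration--compactness analysis identifying $c^*$ as the compactness threshold, where the non--Kato Hardy potential, singular exactly at the concentration point, must be handled carefully; and (ii) the sharp bookkeeping of the truncation errors $\varepsilon^{N-2}$ and $\varepsilon^{\theta}$ against the linear gain $\varepsilon^{\sigma}$ — the inequalities $\sigma<N-2$ and $\sigma<\theta$ are exactly what pin down the hypotheses $\mu\le\bar\mu-1$, $0<\alpha<N-(N-4)_+$ in part~(2) and permit the wider ranges in part~(1).
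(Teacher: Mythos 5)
Your overall strategy — mountain--pass set--up, Brezis--Lieb/concentration--compactness to identify the compactness threshold $c^*$, then a truncated extremal $u_\varepsilon=\eta\,u_{\mu,\varepsilon}$ to push the minimax level below $c^*$ — is exactly the paper's, and your trichotomy for $\int_\Omega u_\varepsilon^2$ (governed by whether $\mu<\bar\mu-1$, $=\bar\mu-1$, or $>\bar\mu-1$) agrees with the paper's Lemma~\ref{Lem3.4} and (3.11), noting $(N-2)a=2\sqrt{\bar\mu-\mu}$.

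However, there is a genuine error that kills your treatment of part~$(1)$. You assert that the truncation error in $\|u_\varepsilon\|_\mu^2$ is $O(\varepsilon^{N-2})$. It is not: using the two-sided bound \eqref{eq1.8}, the tail of $u_\mu$ decays like $|z|^{-\gamma}$ with $\gamma=\sqrt{\bar\mu}+\sqrt{\bar\mu-\mu}<N-2$ for $\mu>0$, and the cut-off error $\int|\nabla\eta|^2u_{\mu,\varepsilon}^2$ is of order $\varepsilon^{2\gamma-(N-2)}=\varepsilon^{(N-2)a}=\varepsilon^{2\sqrt{\bar\mu-\mu}}$, exactly as in the paper's (3.6). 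In the range $\bar\mu-1<\mu<\bar\mu$ your linear gain $\int_\Omega u_\varepsilon^2\gtrsim\varepsilon^{2\sqrt{\bar\mu-\mu}}$ therefore has \emph{the same} exponent as the error, so it does not ``strictly beat'' it; this is precisely the obstruction that forces the hypothesis $\mu\le\bar\mu-1$ in part~$(2)$. Your claim that $\sigma=\min\{2,2\sqrt{\bar\mu-\mu}\}<N-2$ suffices is based on the wrong error exponent and does not close the estimate in part~$(1)$.

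Relatedly, you have misread part~$(1)$: it asserts only that the equation ``has a solution'' (not necessarily nontrivial), for the wider ranges $0<\alpha<N$ and $0<\mu\le\bar\mu$. In the paper this is essentially Lemma~\ref{Lem3.2}: a PS sequence produced by the mountain--pass geometry is bounded, its weak limit is a weak solution, and no nontriviality is claimed. The additional hypotheses $\alpha<N-(N-4)_+$ and $\mu\le\bar\mu-1$ in part~$(2)$ are exactly what is needed to run the energy estimate of Lemma~\ref{Lem3.4} and guarantee the solution is nontrivial (a range that is nonempty only for $N\ge5$, as you noted). The proposed limiting argument to reach $\mu=\bar\mu$ is not used in the paper and is unnecessary once part~$(1)$ is understood as the weaker statement; moreover at $\mu=\bar\mu$ the equivalence of $\|\cdot\|_\mu$ with $\|\cdot\|$ degenerates, so any such limiting argument would need independent justification.
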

\par

\medskip

For the following critical Choquard equation with superlinear perturbation,
\begin{equation}\label{eq1.10}
\left\{\begin{array}{ll}
-\Delta u -\mu\frac{u}{|x|^2}=
\Big( \displaystyle{\int_{\Omega}} \frac{|u(y)|^{2_{\alpha}^*} }{|x-y|^{\alpha}}dy\Big) |u|^{{2_{\alpha}^*}-1}+\lambda u^q,     \,  &x\in  \Omega, \\
u(x) =0,    \, &  x\in \partial \Omega,
\end{array} \right.
\end{equation}
our third main result is the following:
\begin{Thm}\label{Thm1.3}
Let $N\geq3$, $0<\alpha<N-(N-4)_+$, $0<\mu<\bar{\mu}$ and $1<q<2^*-1$, Then, equation  \eqref{eq1.10} has at least one nontrivial solution provided that
\begin{itemize}
  \item $N>\frac{2(2a+q+1)}{2a+q-1}$ and  $\lambda>0$, or
  \item $N<\frac{2(2a+q+1)}{2a+q-1}$ and  $\lambda>\lambda_0$, for $\lambda_0$ large enough.
\end{itemize}
\end{Thm}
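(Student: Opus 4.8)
The plan is to produce the solution as a mountain-pass critical point of
\[
I_\lambda(u)=\frac12\int_\Omega\Big(|\nabla u|^2-\mu\frac{u^2}{|x|^2}\Big)dx-\frac{1}{2\cdot 2_{\alpha}^*}\int_\Omega\!\!\int_\Omega\frac{|u(x)|^{2_{\alpha}^*}|u(y)|^{2_{\alpha}^*}}{|x-y|^\alpha}\,dx\,dy-\frac{\lambda}{q+1}\int_\Omega (u^+)^{q+1}\,dx
\]
on $D_0^{1,2}(\Omega)=H_0^1(\Omega)$; we replace the nonlinearities by their positive truncations so that, testing $I_\lambda'(u)=0$ with $u^-$, every nonzero critical point satisfies $u\ge0$, is positive by the maximum principle, and solves \eqref{eq1.10}. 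Since $0<\mu<\bar\mu$, Hardy's inequality makes $\|u\|_\mu^2:=\int_\Omega(|\nabla u|^2-\mu u^2/|x|^2)\,dx$ a norm equivalent to the Dirichlet norm, and since $2<q+1<2^*<2\cdot 2_{\alpha}^*$ the functional has the mountain-pass geometry: $I_\lambda(0)=0$, there are $r_\lambda,\rho_\lambda>0$ with $I_\lambda\ge\rho_\lambda$ on $\{\|u\|_\mu=r_\lambda\}$, and $I_\lambda(t\varphi)\to-\infty$ as $t\to+\infty$ for every $\varphi\not\equiv0$. Write $c_\lambda\ge\rho_\lambda>0$ for the corresponding mountain-pass level.

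The core compactness step is a local Palais--Smale condition: $I_\lambda$ satisfies $(PS)_c$ for every
\[
c<c^\ast:=\frac{N-\alpha+2}{2(2N-\alpha)}\,S_{H,\alpha}^{\frac{2N-\alpha}{N-\alpha+2}},
\]
which, by \eqref{eq1.7}, is the least-energy level of the limiting equation \eqref{eq1.6}. To prove it, a $(PS)_c$ sequence $\{u_n\}$ is first seen to be bounded (because $q+1>2$ and $2\cdot 2_{\alpha}^*>2$), so $u_n\rightharpoonup u$ and $u$ is a weak solution of \eqref{eq1.10}. Setting $w_n=u_n-u$, the Brezis--Lieb lemma for the Choquard nonlinearity together with a concentration--compactness analysis adapted to the singular weight $|x|^{-2}$ (in the spirit of the version developed by Guo and Tang) shows that $w_n$ splits into finitely many bubbles, each concentrating at the origin (where the Hardy potential is singular) or escaping to infinity, and that each bubble contributes at least $c^\ast$ to $\lim I_\lambda(u_n)$; the subcritical term $\lambda\int(u_n^+)^{q+1}$ is irrelevant since $H_0^1(\Omega)\hookrightarrow L^{q+1}(\Omega)$ compactly. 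As $0\in\Omega$ one has $S_{H,\alpha}(\Omega)=S_{H,\alpha}$, so the threshold is exactly $c^\ast$ and $c<c^\ast$ forces $w_n\to0$, i.e. $u_n\to u$ strongly. It thus remains only to show $c_\lambda<c^\ast$.

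We prove $c_\lambda<c^\ast$ by distinguishing the two regimes. When $N>\frac{2(2a+q+1)}{2a+q-1}$ we test with a truncation of the extremal function: fix $\eta\in C_c^\infty(\Omega)$, $\eta\ge0$, $\eta\equiv1$ near $0$, set $v_\varepsilon=\eta\,u_{\mu,\varepsilon}$ with $u_{\mu,\varepsilon}(x)=\varepsilon^{(2-N)/2}u_\mu(x/\varepsilon)$, and use the two-sided estimate \eqref{eq1.8} (with the rates $\gamma,\gamma'$) by splitting each integral at the scales $|x|\sim\varepsilon$ and $|x|\sim1$. One obtains $\|v_\varepsilon\|_\mu^2=S_{H,\alpha}^{\frac{2N-\alpha}{N-\alpha+2}}+O(\varepsilon^{\,2\sqrt{\bar\mu-\mu}})$, likewise $\int_\Omega\!\int_\Omega\frac{|v_\varepsilon(x)|^{2_{\alpha}^*}|v_\varepsilon(y)|^{2_{\alpha}^*}}{|x-y|^\alpha}\,dx\,dy=S_{H,\alpha}^{\frac{2N-\alpha}{N-\alpha+2}}+O(\varepsilon^{\,2\sqrt{\bar\mu-\mu}})$, and a lower bound $\int_\Omega(v_\varepsilon^+)^{q+1}\ge c_0\,\varepsilon^{\,\theta}$ for an exponent $\theta=\theta(N,q,\mu)$ read off from $\gamma,\gamma'$; recalling that $a$ is normalized so that $\sqrt{\bar\mu}\,a=\sqrt{\bar\mu-\mu}$ (whence $2\sqrt{\bar\mu-\mu}=(N-2)a$), the restriction on $N$ is precisely the statement that $\theta<2\sqrt{\bar\mu-\mu}$, possibly after a further case distinction according to whether the relevant tails converge, which is where the threshold $\bar\mu-1$ enters. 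Maximizing $t\mapsto I_\lambda(tv_\varepsilon)$ over $t\ge0$ — the maximizer being close to the one realizing $c^\ast$, since the first two quantities are close to $S_{H,\alpha}^{(2N-\alpha)/(N-\alpha+2)}$ — then yields $\sup_{t\ge0}I_\lambda(tv_\varepsilon)\le c^\ast+C_1\varepsilon^{\,2\sqrt{\bar\mu-\mu}}-C_2\lambda\,\varepsilon^{\,\theta}<c^\ast$ for every $\lambda>0$ once $\varepsilon=\varepsilon(\lambda)$ is small, so $c_\lambda<c^\ast$. When instead $N<\frac{2(2a+q+1)}{2a+q-1}$, such a concentrating family no longer beats $c^\ast$, but one argues more cheaply: for any fixed $\varphi\in C_c^\infty(\Omega)$ with $\varphi\ge0$, $\varphi\not\equiv0$, one has $c_\lambda\le\max_{t\ge0}I_\lambda(t\varphi)\le C_\varphi\,\lambda^{-2/(q-1)}$ (this uses $q+1>2$), which is $<c^\ast$ for all $\lambda>\lambda_0$ with $\lambda_0$ large. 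In either case $0<c_\lambda<c^\ast$, the local $(PS)$ condition applies, and the mountain-pass theorem gives a critical point $u$ with $I_\lambda(u)=c_\lambda>0$; in particular $u\neq0$, and, having worked with the positive truncation, $u$ is a nontrivial solution of \eqref{eq1.10}.

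The main obstacle is the test-function analysis in the first regime. Because of the Hardy potential the extremal profile $u_\mu$ no longer has the explicit Talenti--Aubin form, and only the non-standard polynomial decay \eqref{eq1.8} is available, so every integral must be estimated by carefully matching the inner region $|x|\lesssim\varepsilon$ with the outer region $|x|\gtrsim\varepsilon$, while the Choquard term forces one to control nested convolution integrals rather than a single power. Pinning down $\theta$ and comparing it with the truncation error $\varepsilon^{2\sqrt{\bar\mu-\mu}}$ is exactly what produces the dichotomy $N\gtrless\frac{2(2a+q+1)}{2a+q-1}$, and for some ranges of $\mu$ (governed by $\mu\lessgtr\bar\mu-1$) a sharpened choice of $v_\varepsilon$ may be required. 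Establishing the local $(PS)_c$ condition, which needs a concentration--compactness lemma handling simultaneously the singular Hardy weight and the nonlocal critical term, is the other technically demanding ingredient.
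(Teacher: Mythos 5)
Your argument follows the same strategy as the paper: mountain-pass geometry for the energy functional, a local Palais--Smale condition below the threshold level $c^\ast=\frac{N-\alpha+2}{2(2N-\alpha)}S_{H,\alpha}^{\frac{2N-\alpha}{N-\alpha+2}}$, and a test-function computation with $v_\varepsilon=\eta\,u_{\mu,\varepsilon}$ built from the (non-explicit) extremal profile and its two-sided decay \eqref{eq1.8}. Your dichotomy $\theta<2\sqrt{\bar\mu-\mu}=(N-2)a$ versus $\theta\geq 2\sqrt{\bar\mu-\mu}$ is exactly the paper's distinction, since $\theta=N-\tfrac{N-2}{2}(q+1)<(N-2)a$ is equivalent to $N>\frac{2(2a+q+1)}{2a+q-1}$.

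The one place where you take a slightly different (and arguably cleaner) route is Case II. The paper keeps the concentrating family $\bar u_\varepsilon$ for a fixed small $\varepsilon$ and shows that the maximizing parameter $t_{\varepsilon,\lambda}\to0$ as $\lambda\to\infty$, so the maximum of $t\mapsto\mathcal I(t\bar u_\varepsilon)$ tends to zero. You instead take an arbitrary fixed $\varphi\in C_c^\infty(\Omega)$, drop the (nonnegative) Choquard term, and get the closed-form bound $c_\lambda\le\max_{t\ge0}I_\lambda(t\varphi)\le C_\varphi\lambda^{-2/(q-1)}$, which is cleaner and entirely avoids any asymptotic analysis of $\bar u_\varepsilon$ in this regime. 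Both arguments give the same conclusion; yours is a small simplification. One minor inaccuracy: you mention the threshold $\bar\mu-1$ as potentially entering the test-function estimates, but that restriction is specific to the linear perturbation $q=1$ (Theorem \ref{Thm1.2}); in the superlinear regime of Theorem \ref{Thm1.3} the full range $0<\mu<\bar\mu$ is allowed without further case distinction, as the dichotomy is entirely captured by $N\gtrless\frac{2(2a+q+1)}{2a+q-1}$.
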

\par

\medskip

Analogously, we have an existence result for critical Choquard equation with superlinear nonlocal perturbation.
\begin{Thm}\label{Thm1.4}
Let $N\geq3$, $0<\alpha<N-(N-4)_+$, $0<\mu<\bar{\mu}$ and $1<p<2_\alpha^*-1$, Then, equation
\begin{equation}\label{eq1.11}
\left\{\begin{array}{ll}
-\Delta u -\mu\frac{u}{|x|^2}=
\Big( \displaystyle{\int_{\Omega}} \frac{|u(y)|^{2_{\alpha}^*} }{|x-y|^{\alpha}}dy\Big) |u|^{{2_{\alpha}^*}-1}+\lambda\Big( \displaystyle{\int_{\Omega}} \frac{|u(y)|^{p} }{|x-y|^{\alpha}}dy\Big) |u|^{p-1} ,     \,  &x\in  \Omega, \\
u(x) =0,    \, &  x\in \partial \Omega,
\end{array} \right.
\end{equation}
has at least one nontrivial solution provided that
\begin{itemize}
  \item $N>\frac{2a-\alpha+2p}{a+p-2}$ and  $\lambda>0$, or
  \item $N\leq\frac{2a-\alpha+2p}{a+p-2}$ and  $\lambda$ is sufficiently large.
\end{itemize}
\end{Thm}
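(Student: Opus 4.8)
\textbf{Variational setting.} Since $0<\mu<\bar\mu$, Hardy's inequality makes $\|u\|:=\big(\int_\Omega|\nabla u|^2-\mu\int_\Omega u^2/|x|^2\big)^{1/2}$ a norm on $D_0^{1,2}(\Omega)$ equivalent to the Dirichlet one, and \eqref{eq1.11} is the Euler--Lagrange equation of the $C^1$ functional
$$I_\lambda(u)=\frac12\|u\|^2-\frac{1}{2\cdot2_\alpha^*}\int_\Omega\int_\Omega\frac{|u(x)|^{2_\alpha^*}|u(y)|^{2_\alpha^*}}{|x-y|^\alpha}-\frac{\lambda}{2p}\int_\Omega\int_\Omega\frac{|u(x)|^{p}|u(y)|^{p}}{|x-y|^\alpha},$$
the two double integrals being finite, and the last one weakly continuous (as $\tfrac{2Np}{2N-\alpha}<2^*$), by the Hardy--Littlewood--Sobolev and Sobolev inequalities together with $1<p<2_\alpha^*-1<2_\alpha^*$. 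Hence $I_\lambda(u)\ge\frac12\|u\|^2-C_1\|u\|^{2\cdot2_\alpha^*}-\lambda C_2\|u\|^{2p}$ with $2<2p<2\cdot2_\alpha^*$, so $I_\lambda\ge\beta>0$ on a small sphere $\|u\|=\rho$, while $I_\lambda(tw)\to-\infty$ as $t\to+\infty$ for fixed $w\ne0$ (the critical term has the highest homogeneity). Thus $I_\lambda$ has the mountain--pass geometry, with minimax value $c_\lambda\ge\beta$, and testing a $(PS)_c$ sequence with itself and with $\tfrac{1}{2p}u_n$ (using $p>1$) shows that $(PS)_c$ sequences are bounded.

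\textbf{Compactness below the critical level.} Let $c^*:=\frac{N+2-\alpha}{2(2N-\alpha)}S_{H,\alpha}^{\frac{2N-\alpha}{N+2-\alpha}}$; recall $S_{H,\alpha}(\Omega)\ge S_{H,\alpha}$ (extend by $0$; in fact equality holds for $0<\mu<\bar\mu$). We claim $I_\lambda$ satisfies $(PS)_c$ for every $c<c^*$. If $u_n\rightharpoonup u$ is a bounded $(PS)_c$ sequence then $u$ solves \eqref{eq1.11}; setting $v_n:=u_n-u$, the Brezis--Lieb lemma for $\|\cdot\|$, the compact subcritical embedding (which makes $\int_\Omega\int_\Omega|u_n|^p|u_n|^p|x-y|^{-\alpha}\to\int_\Omega\int_\Omega|u|^p|u|^p|x-y|^{-\alpha}$), and a nonlocal Brezis--Lieb lemma for the critical Choquard integral give
$$\tfrac12\|v_n\|^2-\tfrac{1}{2\cdot2_\alpha^*}\int_\Omega\int_\Omega\frac{|v_n(x)|^{2_\alpha^*}|v_n(y)|^{2_\alpha^*}}{|x-y|^\alpha}\to c-I_\lambda(u),\qquad \|v_n\|^2-\int_\Omega\int_\Omega\frac{|v_n(x)|^{2_\alpha^*}|v_n(y)|^{2_\alpha^*}}{|x-y|^\alpha}\to0.$$
If $\|v_n\|^2\to\ell>0$, the definition of $S_{H,\alpha}(\Omega)$ and the second limit force $\ell\ge S_{H,\alpha}^{\frac{2N-\alpha}{N+2-\alpha}}$, whence $c-I_\lambda(u)=\frac{N+2-\alpha}{2(2N-\alpha)}\ell\ge c^*$; but $\langle I_\lambda'(u),u\rangle=0$ gives $I_\lambda(u)=\big(\tfrac12-\tfrac{1}{2p}\big)\|u\|^2+\big(\tfrac{1}{2p}-\tfrac{1}{2\cdot2_\alpha^*}\big)\int_\Omega\int_\Omega\frac{|u|^{2_\alpha^*}|u|^{2_\alpha^*}}{|x-y|^\alpha}\ge0$ since $1<p<2_\alpha^*$, contradicting $c<c^*$. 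Hence $\ell=0$ and $u_n\to u$.

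\textbf{Energy estimate (the core).} It remains to find $v\ne0$ with $\max_{t\ge0}I_\lambda(tv)<c^*$. Let $u_\mu$ be the extremal of $S_{H,\alpha}$, $u_{\mu,\varepsilon}=\varepsilon^{\frac{2-N}{2}}u_\mu(\cdot/\varepsilon)$ as in \eqref{eq1.7}, fix $\eta\in C_c^\infty(\Omega)$ with $\eta\equiv1$ on a ball $B_{r_0}$ around $0$ and set $v_\varepsilon:=\eta\,u_{\mu,\varepsilon}$. Using the two-sided bounds \eqref{eq1.8} one computes, as $\varepsilon\to0$,
$$\|v_\varepsilon\|^2=S_{H,\alpha}^{\frac{2N-\alpha}{N+2-\alpha}}+O\!\big(\varepsilon^{2\sqrt{\bar\mu-\mu}}\big),\qquad \int_\Omega\int_\Omega\frac{|v_\varepsilon(x)|^{2_\alpha^*}|v_\varepsilon(y)|^{2_\alpha^*}}{|x-y|^\alpha}=S_{H,\alpha}^{\frac{2N-\alpha}{N+2-\alpha}}+o\!\big(\varepsilon^{2\sqrt{\bar\mu-\mu}}\big),$$
the $O(\varepsilon^{2\sqrt{\bar\mu-\mu}})$ coming from the cut--off zone $\{\eta\not\equiv1\}$, where $u_{\mu,\varepsilon}\sim\varepsilon^{\sqrt{\bar\mu-\mu}}|x|^{-\gamma}$. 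For the perturbation, restricting the double integral to $B_\varepsilon\times B_\varepsilon\subset B_{r_0}\times B_{r_0}$ (where $v_\varepsilon=u_{\mu,\varepsilon}\gtrsim\varepsilon^{-\sqrt{\bar\mu-\mu}}|x|^{-\gamma'}$ by the lower bound in \eqref{eq1.8}) and rescaling $x=\varepsilon\xi$, $y=\varepsilon\zeta$ yields, using $\gamma'+\sqrt{\bar\mu-\mu}=\sqrt{\bar\mu}$ and $2\sqrt{\bar\mu}=N-2$ (and $p\gamma'<N$, which keeps the limiting $(\xi,\zeta)$--integral finite),
$$\int_\Omega\int_\Omega\frac{|v_\varepsilon(x)|^{p}|v_\varepsilon(y)|^{p}}{|x-y|^\alpha}\,dx\,dy\ \ge\ c_0\,\varepsilon^{\,2N-\alpha-p(N-2)},\qquad \varepsilon\ \text{small}.$$
Maximising $g(t):=I_\lambda(tv_\varepsilon)$ over $t\ge0$ --- the maximiser $t_\varepsilon$ staying bounded away from $0$ and $\infty$ since the last term is of lower order --- and bounding the perturbation contribution from below on a fixed interval $t\in[t_0,2]$, one obtains
$$\max_{t\ge0}I_\lambda(tv_\varepsilon)\ \le\ c^*+C_1\varepsilon^{2\sqrt{\bar\mu-\mu}}-C_2\lambda\,\varepsilon^{\,2N-\alpha-p(N-2)}.$$
Now the inequality $2N-\alpha-p(N-2)<2\sqrt{\bar\mu-\mu}$ is, after clearing denominators, exactly the first alternative $N>\frac{2a-\alpha+2p}{a+p-2}$ of the statement (with $a=\sqrt{(\bar\mu-\mu)/\bar\mu}$, so that $2\sqrt{\bar\mu-\mu}=a(N-2)$): in that case the negative term dominates the error for every $\lambda>0$ and all small $\varepsilon$, so $\max_{t\ge0}I_\lambda(tv_\varepsilon)<c^*$. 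Otherwise one fixes $\varepsilon$ small and takes $\lambda$ so large that $C_2\lambda\,\varepsilon^{2N-\alpha-p(N-2)}>C_1\varepsilon^{2\sqrt{\bar\mu-\mu}}$, with the same conclusion. Either way $0<c_\lambda<c^*$, so by the second step a $(PS)_{c_\lambda}$ sequence converges to a critical point $u$ of $I_\lambda$ with $I_\lambda(u)=c_\lambda>0$; hence $u\ne0$ is a nontrivial solution of \eqref{eq1.11}.

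\textbf{Main obstacle.} The delicate point is the sharp expansion of the three quantities above, and above all the lower bound on the nonlocal perturbation term with the correct exponent: contrary to the local case, the Riesz kernel couples the behaviour of $u_\mu$ at $0$ and at $\infty$, so a careful splitting of the double integral together with both sides of \eqref{eq1.8} is required, and the dimensional threshold in the statement is precisely the borderline at which this gain overtakes the $O(\varepsilon^{2\sqrt{\bar\mu-\mu}})$ error produced by the cut--off $\eta$ (equivalently, by the Hardy scaling).
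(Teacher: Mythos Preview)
Your proof follows essentially the same route as the paper: mountain-pass geometry, local $(PS)_c$ below $c^*=\frac{N+2-\alpha}{2(2N-\alpha)}S_{H,\alpha}^{\frac{2N-\alpha}{N+2-\alpha}}$ via a nonlocal Brezis--Lieb splitting, and the energy estimate with the truncated extremal $v_\varepsilon=\eta\,u_{\mu,\varepsilon}$. Your identification $2\sqrt{\bar\mu-\mu}=(N-2)a$ and the exponent $2N-\alpha-p(N-2)$ for the subcritical convolution term are exactly the ones the paper obtains (the paper restricts that double integral to $B_\delta\times B_\delta$ and rescales, while you restrict to $B_\varepsilon\times B_\varepsilon$; both yield the same power of $\varepsilon$).

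There is one genuine, if small, gap: in the second alternative you reuse the inequality
\[
\max_{t\ge0}I_\lambda(tv_\varepsilon)\le c^*+C_1\varepsilon^{(N-2)a}-C_2\lambda\,\varepsilon^{2N-\alpha-p(N-2)}
\]
and then ``fix $\varepsilon$ small and take $\lambda$ large.'' But this inequality was obtained by using that the maximiser $t_\varepsilon$ lies in a fixed interval $[t_0,2]$, which relies on the perturbation term being of lower order. Once you fix $\varepsilon$ and send $\lambda\to\infty$, $t_\varepsilon\to0$ (indeed $\lambda t_\varepsilon^{2p-2}\to A/D$), so $\lambda t_\varepsilon^{2p}\to0$ and the displayed inequality with a $\lambda$-independent $C_2$ fails. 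The paper handles this either by coupling $\lambda=\varepsilon^{-\theta}$ with $\theta>2N-\alpha-(N-2)p-(N-2)a$ (so the perturbation term is still $o(1)$ and $t_\varepsilon$ stays bounded below, while the subtracted term still beats the error), or, as in its treatment of the local perturbation, by the direct observation that for fixed $\varepsilon$ one has $\max_t I_\lambda(tv_\varepsilon)\le \tfrac{t_\varepsilon^2}{2}\|v_\varepsilon\|^2\to0<c^*$ as $\lambda\to\infty$. Either fix is immediate; with it, your argument is complete and matches the paper's.
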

\par

\medskip

Regarding the aforementioned three results, we primarily employ variational methods.  Associated with equation \eqref{eq1.1}, we consider the following energy functional
\begin{equation*}
I:= \frac{1}{2} \int_{\Omega} |\nabla u|^2dx -\frac{\mu }{2} \int_{\Omega} \frac{|u|^2}{|x|^2} dx -\frac{1}{2{2_{\alpha}^*}}\int_{\Omega}\int_{\Omega} \frac{|u(x)|^{2_{\alpha}^*}|u(y)|^{2_{\alpha}^*} }{|x-y|^{\alpha}}dxdy- \lambda \int_{\Omega} F(u)dx ,
\end{equation*}
where $F(u)=\int_0^uf(t)dt$. It follows that the critical points of $I$ are solutions of equation \eqref{eq1.1}.  In comparison to the previous works, the investigation of (1.1) presents significantly enhanced complexity owing to the synergistic interaction between the critical Hardy term and the upper critical growth of Choquard-type nonlinear term.  It is commonly acknowledged that the key step in addressing critical problems lies in obtaining a precise critical level associated with the sharp constant $S_{H,\alpha}$  within which one can prove a local Palais-Smale ((PS) for short) condition. Due to the absence of a specific expression for the extremal function $u_\mu(x)$, more intricate strategies are demanded to reconcile the competing effects between the Hardy potential and the non-local convolution term. It is worth emphasizing that  when $\mu=0$, Gao and Yang \cite{Gao-Yang2018SCM} have derived the specific expression for the extremal function $u_\mu(x)$ of  $S_{H,\alpha}$.
Their seminal work provides a foundational reference point for subsequent investigations, yet the generalization to $\mu\neq0$  remains an active area of research requiring innovative analytical frameworks. The present study contributes to this endeavor by the variational approach that systematically accounts for the interplay between local singular effects and non-local convolution dynamics.

The paper is organized as follows. In section \ref{sec:prelim} we collect some basic results needed in the sequel, in section \ref{sec:est} we derive the estimates about the minimization problem and in the last three sections we obtain the existence results for the different perturbation terms.

\par
\section{Notations and Preliminary results} \label{sec:prelim}
Before proving our results, we consider the work space $H^1_0(\Omega)$ which is the completion of $C_0^{\infty}(\Omega)$ with the norm
$$||u||:=\left(\int_{\Omega} |\nabla u |^2 dx \right)^{\frac{1}{2}}.$$
For all $\mu\in [0, \bar{\mu})$, we endow the scalar product of the Hilbert space $H=H_0^1(\Omega)$,
$$\langle u,v\rangle_\mu=\int_{\Omega} \nabla u \nabla vdx -\mu \int_{\Omega}\frac{uv}{|x|^{2}} dx$$
and define the norm
$$||u||^2_\mu=\langle u,v\rangle=\int_{\Omega} |\nabla u |^2dx -\mu \int_{\Omega}\frac{|u |^2}{|x|^{2}} dx.$$
By the Hardy inequality \cite{Garcia-Peral1998JDE}
$$ \int_{\Omega} \frac{|u|^2}{|x|^2}\,dx\leq \frac{1}{\bar{\mu}}\int_{\Omega} |\nabla u|^2 dx,~~\forall u\in H, $$
it is not difficult to derive that the norm $||\cdot||_\mu$ is equivalent to the usual normal  $||u||=\big(\int_{\Omega} |\nabla u |^2dx \big)^{1/2}$.
It is well-known that $H^1_0(\Omega)\hookrightarrow L^p(\Omega)$ continuously for $p\in [1, 2^*]$, compactly for $p\in [1, 2^*)$.
Moreover, write $\mathcal{L}_{\mu}:= -\Delta -\frac{\mu}{|x|^{\mu}}$ with $0< \mu< \bar{\mu}:=\frac{(N-2)^2}{4}.$ Then, according to \cite{Egnell1989IUMJ,Ekeland-Ghoussoub2002BAMS}, the spectrum $\sigma_{\mu}\subset (0,+\infty)$ of $\mathcal{L}_{\mu}$ is discrete and each eigenvalue $\lambda_{i}\, (i=1,\, 2,\, \cdots)$ is isolated and has finite multiplicity. Moreover, the smallest eigenvalue $\lambda_1$ is simple and $\lambda_i\to \infty$ as $i\to \infty.$

We will need to make use of the well-known Hardy-Littlewood-Sobolev inequality \cite{Lieb-Loss2001book}:
\begin{Pro}\label{Pro2.3}
(Hardy-Littlewood-Sobolev inequality) Let $t,r>1$, and $0<\alpha< N$ with $1/t+ 1/r+ \alpha/N=2 ,~f\in L^t(\R^N)$  and $g\in L^r(\R^N)$. There exists a sharp constant  $C(t,r,N,\mu)>0$, independent of $f$ and $g$, such that
\begin{equation}\label{eq2.1}
\int_{\mathbb{R}^N}\int_{\R^N}\frac{f(x)g(y)}{|x-y|^{\alpha}}dxdy \leq C(t,r,N,\alpha)||f||_t||g||_r.
\end{equation}
\end{Pro}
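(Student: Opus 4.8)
This is the classical Hardy--Littlewood--Sobolev inequality, which the paper uses from \cite{Lieb-Loss2001book}; what must be shown is that \eqref{eq2.1} holds with some finite constant (the sharp constant is then the infimum of all admissible ones). I would argue by reducing to a convolution estimate via duality and then obtaining that estimate from the Hardy--Littlewood maximal theorem. Since replacing $f,g$ by $|f|,|g|$ only increases the left-hand side, assume $f,g\ge 0$. Writing $K(x):=|x|^{-\alpha}$ and noting $K$ is even, the left-hand side of \eqref{eq2.1} equals $\int_{\R^N}(K*f)(y)\,g(y)\,dy$, so by Hölder's inequality it is at most $\|K*f\|_{L^{r'}}\,\|g\|_{L^r}$ with $\tfrac1r+\tfrac1{r'}=1$. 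Thus it suffices to prove the convolution bound $\|K*f\|_{L^{r'}}\le C\,\|f\|_{L^t}$; here the scaling hypothesis $\tfrac1t+\tfrac1r+\tfrac\alpha N=2$ is precisely equivalent to $\tfrac1{r'}=\tfrac1t-1+\tfrac\alpha N$, the dimensionally admissible relation, and from $t,r\in(1,\infty)$, $\alpha\in(0,N)$ one checks $0<\tfrac t{r'}<1$ and $\alpha t'>N$ (the latter because $\tfrac1t+\tfrac\alpha N=2-\tfrac1r>1$), facts used below.

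The key is a pointwise bound for $K*f$ in terms of the Hardy--Littlewood maximal function $Mf$. For $R>0$, split $(K*f)(y)=\int_{|x-y|<R}+\int_{|x-y|\ge R}$. Decomposing the inner region dyadically into annuli $\{R2^{-j-1}\le|x-y|<R2^{-j}\}$ and estimating each by the maximal function gives $\int_{|x-y|<R}|x-y|^{-\alpha}f(x)\,dx\le C\,R^{N-\alpha}\,Mf(y)$, the series in $j$ converging because $N-\alpha>0$. For the outer region, Hölder's inequality yields $\int_{|x-y|\ge R}|x-y|^{-\alpha}f(x)\,dx\le\|f\|_{L^t}\bigl(\int_{|z|\ge R}|z|^{-\alpha t'}\,dz\bigr)^{1/t'}=C\,R^{N/t'-\alpha}\,\|f\|_{L^t}$, the integral being finite since $\alpha t'>N$. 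Optimizing the sum of the two bounds over $R>0$ (balancing the two terms, whose powers of $R$ have opposite signs) produces $(K*f)(y)\le C\,\|f\|_{L^t}^{\,1-t/r'}\,(Mf(y))^{t/r'}$, where the exponent $t/r'\in(0,1)$ is forced by the scaling relation.

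Taking $L^{r'}$ norms and using $r'\cdot(t/r')=t$, the pointwise bound gives $\|K*f\|_{L^{r'}}\le C\,\|f\|_{L^t}^{\,1-t/r'}\,\|Mf\|_{L^t}^{t/r'}$. Since $t>1$, the Hardy--Littlewood maximal theorem gives $\|Mf\|_{L^t}\le C_t\,\|f\|_{L^t}$, whence $\|K*f\|_{L^{r'}}\le C\,\|f\|_{L^t}$. Combined with the Hölder step above, this proves \eqref{eq2.1} with $C(t,r,N,\alpha)$ an explicit product of the constants appearing (the dyadic geometric series, the power of $R$ from the outer integral, and the maximal-function constant $C_t$). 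The main obstacle --- modulo the classical maximal theorem, which I would cite as a black box --- is making the dyadic estimate and the $R$-optimization clean and verifying that the exponent inequalities $N-\alpha>0$, $\alpha t'>N$, $0<t/r'<1$ all follow from $\tfrac1t+\tfrac1r+\tfrac\alpha N=2$ together with $t,r>1$ and $0<\alpha<N$; these are short computations. Two alternative routes that avoid the maximal theorem: (i) obtain the convolution bound $\|K*f\|_{L^{r'}}\le C\|f\|_{L^t}$ from the weak Young inequality, using that $K\in L^{N/\alpha,\infty}(\R^N)$ (since $|\{|x|^{-\alpha}>\lambda\}|=c_N\lambda^{-N/\alpha}$) and the Marcinkiewicz interpolation theorem after a $\lambda$-dependent truncation of the kernel into an $L^{t'}$ part and an $L^1$ part; or (ii) follow Lieb and Loss: reduce by the Riesz rearrangement inequality to radially symmetric nonincreasing $f,g$ (legitimate since $K$ is its own symmetric decreasing rearrangement), then estimate directly via the layer-cake representation $f=\int_0^\infty\mathbf{1}_{\{f>a\}}\,da$ and explicit bounds on $\int_{B_R}\int_{B_\rho}|x-y|^{-\alpha}\,dx\,dy$ followed by an optimization in the level parameters.
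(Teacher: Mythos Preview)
Your argument is correct: this is precisely Hedberg's proof of the Hardy--Littlewood--Sobolev inequality via the maximal function, and the exponent checks you flag ($N-\alpha>0$, $\alpha t'>N$, $0<t/r'<1$) do indeed follow from the hypotheses as you indicate. The two alternatives you mention (weak Young/Marcinkiewicz, and the Lieb--Loss rearrangement approach) are also standard and valid.

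However, the paper does not give its own proof of Proposition~\ref{Pro2.3} at all. It is stated as a preliminary fact with a direct citation to \cite{Lieb-Loss2001book} and is then immediately used (for instance in \eqref{eq2.2} and in Lemma~\ref{Lem2.2}) as a black box. So there is no ``paper's proof'' to compare against; you have supplied substantially more than the paper does. If your goal is to match the paper's treatment, a one-line citation suffices; if your goal is to make the manuscript self-contained, your Hedberg argument is a clean and appropriate choice, and in fact the Lieb--Loss route you sketch in alternative~(ii) is the one closest to the cited reference and has the advantage of also identifying the extremal functions recorded in the paper just after the statement.
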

Here, $||f||_t=\big(\int_{\mathbb{R}^N} |f|^t\big)^{1/t}$. If $t=r=\frac{2N}{2N-\alpha}$, then
$$C(t,r,N,\alpha)=C(N,\alpha)= \pi^{\frac{\alpha}{2}} \frac{\Gamma(\frac{N-\alpha}{2})} {\Gamma(\frac{2N-\alpha}{2})} \bigg(\frac{\Gamma(\frac{N}{2})} {\Gamma(N)} \bigg)^{\frac{\alpha}{N}-1}.$$
In this case, the equality in \eqref{eq2.1} holds if and only if $f\equiv C g$ and
$$g(x)=A\Big(a+|x-x_0|^2\Big)^{\frac{\alpha-2N}{2}}$$
for some $A\in \mathbb{C}, x_0\in \mathbb{R}^N$ and $0\neq a \in \mathbb{R}$.

If $f(x)=h(x)=u^p(x)$ in  \eqref{eq2.1},  the following integral
$$\int_{\R^N}\int_{\R^N}\frac{|u(x)|^p|u(y)|^p
}{|x-y|^{\alpha}}dxdy $$
is well-defined provided
$$\frac{2N-\alpha}{N}\leq p \leq \frac{2N-\alpha}{N-2}. $$ $2_{*\alpha}:=\frac{2N-\alpha}{N}$ is called the lower critical exponent and $2_{\alpha}^*:=\frac{2N-\alpha}{N-2}$  the upper critical exponent in the sense of the Hardy-Littlewood-Sobolev inequality.
\par
By the Hardy-Littlewood-Sobolev inequality, for all $u \in D^{1,2}(\mathbb{R}^N)$, we get
\begin{equation}\label{eq2.2}
\int_{\R^N}\int_{\R^N} \frac{|u(x)|^{2_{\alpha}^*}|u(y)|^{2_{\alpha}^*} }{|x-y|^{\alpha}}dxdy \leq C(N,\alpha) ||u||_{2^*}^{ 2\cdot2_{\alpha}^* }<+\infty,
\end{equation}
where $C(N,\alpha)$ is defined as in Proposition \ref{Pro2.3}.

\par
\section{Estimates} \label{sec:est}
Regarding $S_{H,\alpha}$ we get the following estimate.
\begin{Lem}\label{Lem2.2}
Let $N\geq3$, $0<\alpha<N-(N-4)_+$, $0<\mu<\bar{\mu}=\frac{(N-2)^2}{4}$, then $S^*_{H,\alpha}$ defined in \eqref{eq2.3} satisfies
$$ \frac{1}{C(N,\alpha)^{1/2_{\alpha}^*}} S_{\mu}\leq S_{H,\alpha}<\frac{1}{C(N,\alpha)^{1/2_{\alpha}^*}} S,$$
where
$$S:=\inf_{u\in D^{1,2}(\mathbb{R}^N)\backslash \{0\} } \frac{\int_{\mathbb{R}^N}|\nabla u|^2dx }{\Big(\int_{\mathbb{R}^N} |u|^{2^*}dx\Big)^{2/2^*}} ~~\text{and}~~S_{\mu}:=\inf_{u\in D^{1,2}(\mathbb{R}^N)\backslash \{0\} } \frac{\int_{\mathbb{R}^N}|\nabla u|^2dx -\mu\int_{\mathbb{R}^N}\frac{u^2}{x^2} dx
}{
\Big(\int_{\mathbb{R}^N}|u|^{2^*}dx\Big)^{2/2^*}}.$$
\end{Lem}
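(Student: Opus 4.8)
The plan is to establish the two inequalities separately, exploiting the Hardy--Littlewood--Sobolev (HLS) inequality \eqref{eq2.2} to pass between the $2_\alpha^*$-convolution norm and the $L^{2^*}$-norm, together with the known variational characterizations of $S$ and $S_\mu$.

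For the lower bound $\frac{1}{C(N,\alpha)^{1/2_\alpha^*}}S_\mu \le S_{H,\alpha}$, I would start from an arbitrary $u\in D^{1,2}(\mathbb{R}^N)\setminus\{0\}$ and apply \eqref{eq2.2}, which gives
$$\Big(\int_{\mathbb{R}^N}\int_{\mathbb{R}^N} \tfrac{|u(x)|^{2_\alpha^*}|u(y)|^{2_\alpha^*}}{|x-y|^\alpha}\,dx\,dy\Big)^{1/2_\alpha^*}\le C(N,\alpha)^{1/2_\alpha^*}\, \|u\|_{2^*}^{2}.$$
Dividing the numerator $\|u\|_\mu^2$ by both sides and recalling the definition of $S_\mu$ (which bounds $\|u\|_\mu^2$ from below by $S_\mu\|u\|_{2^*}^2$), I obtain
$$\frac{\|u\|_\mu^2}{\big(\int\!\!\int |u|^{2_\alpha^*}(x)|u|^{2_\alpha^*}(y)/|x-y|^\alpha\big)^{1/2_\alpha^*}} \ge \frac{1}{C(N,\alpha)^{1/2_\alpha^*}}\cdot\frac{\|u\|_\mu^2}{\|u\|_{2^*}^2}\ge \frac{1}{C(N,\alpha)^{1/2_\alpha^*}}\,S_\mu,$$
and taking the infimum over $u$ yields the claim. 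This direction is essentially soft, once \eqref{eq2.2} is in hand.

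For the strict upper bound $S_{H,\alpha} < \frac{1}{C(N,\alpha)^{1/2_\alpha^*}}S$, the idea is to use as test functions the Talenti--Aubin (Aubin--Talenti) instantons $U_\varepsilon$, which are the extremals for $S$. Along these functions one has $\|U_\varepsilon\|_{2^*}^{2^*}=S^{N/2}$ and $\int|\nabla U_\varepsilon|^2=S^{N/2}$, and a direct computation shows that for this specific $f=g=U_\varepsilon^{2_\alpha^*-1}\cdot U_\varepsilon$ the HLS inequality \eqref{eq2.1} is in fact an equality (the $U_\varepsilon$ are, up to scaling and translation, exactly the optimizers $A(a+|x-x_0|^2)^{(\alpha-2N)/2}$ appearing in Proposition \ref{Pro2.3} raised to the appropriate power), so that the convolution term equals $C(N,\alpha)\|U_\varepsilon\|_{2^*}^{2\cdot 2_\alpha^*}$ exactly. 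Hence the Choquard-type Rayleigh quotient evaluated at $U_\varepsilon$ reduces to
$$\frac{\int|\nabla U_\varepsilon|^2 - \mu\int U_\varepsilon^2/|x|^2}{C(N,\alpha)^{1/2_\alpha^*}\|U_\varepsilon\|_{2^*}^{2}} = \frac{1}{C(N,\alpha)^{1/2_\alpha^*}}\Big(S - \mu\,\frac{\int U_\varepsilon^2/|x|^2}{\|U_\varepsilon\|_{2^*}^2}\Big),$$
and since $\mu>0$ and $\int U_\varepsilon^2/|x|^2>0$ (finite precisely because $\mu<\bar\mu$, via the Hardy inequality applied to $U_\varepsilon\in D^{1,2}$), the right-hand side is strictly less than $\frac{1}{C(N,\alpha)^{1/2_\alpha^*}}S$. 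Taking the infimum preserves the strict inequality.

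The main obstacle is the upper bound: one must verify carefully that the HLS inequality is saturated along the Aubin--Talenti profile. I would check that $U_\varepsilon^{2_\alpha^*}(x) = (U_\varepsilon^{2^*/2}(x))^{2} $ up to normalization matches the form $(a+|x-x_0|^2)^{\alpha-2N}$ required by the equality case, i.e. that $U_\varepsilon^{2^*/2}$ is a translate-dilate of $(a+|x|^2)^{-(2N-\alpha)/2}$; this is a matter of comparing exponents: $U_\varepsilon(x)\sim (\varepsilon^2+|x|^2)^{-(N-2)/2}$, so $U_\varepsilon^{2_\alpha^*}(x)\sim(\varepsilon^2+|x|^2)^{-(2N-\alpha)/2}$, which is exactly of the prescribed form with $a=\varepsilon^2$ and exponent $(\alpha-2N)/2$. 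A secondary point to handle is the integrability/finiteness of $\int U_\varepsilon^2/|x|^2$, which follows from $\mu<\bar\mu$ and the Hardy inequality, together with the fact that this quantity is positive (so the subtraction strictly lowers the quotient) and that the ratio $\int U_\varepsilon^2/|x|^2 \big/\|U_\varepsilon\|_{2^*}^2$ is in fact independent of $\varepsilon$ by scaling, so no delicate limiting argument in $\varepsilon$ is needed. Finally I would remark that the constant $S_{H,\alpha}^*$ in \eqref{eq2.3} coincides with $S_{H,\alpha}$ as defined in Theorem \ref{Thm1.1}, so the estimate is exactly the one claimed there (with the harmless replacement of $<$ by $\le$ on the left, since attainment of $S_\mu$ is not used).
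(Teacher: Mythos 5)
Your proof is correct and follows essentially the same route as the paper: the lower bound comes from applying the Hardy--Littlewood--Sobolev inequality \eqref{eq2.2} together with the definition of $S_\mu$, and the strict upper bound comes from evaluating the Rayleigh quotient at an Aubin--Talenti extremal $\bar U$ for $S$, for which HLS is an equality and $\int_{\mathbb{R}^N}\bar U^2/|x|^2\,dx>0$ makes the inequality strict since $\mu>0$. The extra checks you flag (matching exponents to the HLS equality profile $A(a+|x-x_0|^2)^{(\alpha-2N)/2}$, finiteness via Hardy, and scale-invariance of the ratio so no $\varepsilon\to 0$ limit is needed) are all correct and simply make explicit what the paper leaves implicit.
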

\begin{proof}
On one side, by the Hardy-Littlewood-Sobolev inequality, we have
\begin{equation*}
\begin{split}
\Big(\int_{\mathbb{R}^N}\int_{\mathbb{R}^N} \frac{|u|^{2_{\alpha}^*}(x)|u(y)|^{2_{\alpha}^*} }{|x-y|^{\alpha}}dxdy \Big)^{1/2_{\alpha}^*}&\leq C(N,\alpha)^{1/2_{\alpha}^*}\|u\|^2_{2^*} \\
&\leq C(N,\alpha)^{1/2_{\alpha}^*}S_{\mu}^{-1} \Big( \int_{\mathbb{R}^N}|\nabla u|^2dx -\mu\int_{\mathbb{R}^N}\frac{|u|^2}{|x|^2} dx\Big),
\end{split}
\end{equation*}
which implies that
$$\frac{S_{\mu}}{C(N,\alpha)^{1/2_{\alpha}^*}}\leq S_{H,\alpha}.$$
On the other side, by \eqref{eq2.2}, it is easy to see that
\begin{equation*}
\int_{\R^N}\int_{\R^N} \frac{|\bar{U}(x)|^{2_{\alpha}^*}|\bar{U}(y)|^{2_{\alpha}^*} }{|x-y|^{\alpha}}dxdy = C(N,\alpha) ||\bar{U}||_{2^*}^{ 2\cdot2_{\alpha}^* }
\end{equation*}
if and only if
$$\bar{U}=A\Big(a+|x-x_0|^2\Big)^{\frac{2-N}{2}}$$
for some $A\in \mathbb{R}, x_0\in \mathbb{R}^N$ and $0\neq a \in \mathbb{R}$, where $\bar{U} $ is a minimizer for $S$.
It follows from $0<\mu < \bar{\mu}$ that
\begin{equation*}
\begin{aligned}
S_{H,\alpha}&=\inf_{u\in D^{1,2}(\mathbb{R}^N)\backslash \{0\} } \frac{\int_{\mathbb{R}^N}|\nabla u|^2dx -\mu\int_{\mathbb{R}^N}\frac{u^2}{x^2} dx
}{\Big(\int_{\mathbb{R}^N}\int_{\mathbb{R}^N} \frac{|u|^{2_{\alpha}^*}(x)|u(y)|^{2_{\alpha}^*} }{|x-y|^{\alpha}}dxdy \Big)^{1/2_{\alpha}^*}} \\
&\leq  \frac{\int_{\mathbb{R}^N}|\nabla \bar{U}|^2dx  -\mu\int_{\mathbb{R}^N}\frac{\bar{U}^2}{x^2} dx
}{\Big(\int_{\mathbb{R}^N}\int_{\mathbb{R}^N} \frac{|\bar{U}|^{2_{\alpha}^*}(x)|\bar{U}(y)|^{2_{\alpha}^*} }{|x-y|^{\alpha}}dxdy \Big)^{1/2_{\alpha}^*}}\\
&<\frac{1}{C(N,\alpha)^{1/2_{\alpha}^*}} \frac{\int_{\mathbb{R}^N}|\nabla \bar{U}|^2dx }{
\Big(\int_{\mathbb{R}^N}|\bar{U}|^{2^*}dx\Big)^{2/2^*}},
\end{aligned}
\end{equation*}
which means $S_{H,\alpha}<\frac{1}{C(N,\alpha)^{1/2_{\alpha}^*}} S,$
where
$$S=\frac{\int_{\mathbb{R}^N}|\nabla u|^2dx }{
\Big(\int_{\mathbb{R}^N}|u|^{2^*}dx\Big)^{2/2^*}}.$$
Therefore, $ \frac{1}{C(N,\alpha)^{1/2_{\alpha}^*}} S_{\mu}\leq S_{H,\alpha}<\frac{1}{C(N,\alpha)^{1/2_{\alpha}^*}} S.$
\end{proof}

Denote $S_{H,\alpha} (\Omega)$ as follows:
\begin{equation*}
S_{H,\alpha}(\Omega):=\inf_{u\in D_0^{1,2}(\Omega)\backslash \{0\} } \frac{\int_{\Omega}|\nabla u|^2dx -\mu\int_{\Omega}\frac{u^2}{x^2} dx
}{
\Big(\int_{\Omega}\int_{\Omega} \frac{|u|^{2_{\alpha}^*}(x)|u(y)|^{2_{\alpha}^*} }{|x-y|^{\alpha}}dxdy \Big)^{1/2_{\alpha}^*}}.
\end{equation*}
Regarding $S_{H,\alpha} (\Omega)$ we get the following estimate.
\begin{Lem}
Let $N\geq 4$, $0<\alpha<N-(N-4)_+$. If $\mu<0$, then $S_{H,\alpha}(\Omega)=\frac{1}{C(N,\alpha)^{1/2_{\alpha}^*}}S$ and is not attained for any $\Omega$,
where
$$S:=\inf_{u\in D^{1,2}(\mathbb{R}^N)\backslash \{0\} } \frac{\int_{\mathbb{R}^N}|\nabla u|^2dx }{\Big(\int_{\mathbb{R}^N} |u|^{2^*}dx\Big)^{2/2^*}} .$$
\end{Lem}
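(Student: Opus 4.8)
The plan is to sandwich $S_{H,\alpha}(\Omega)$ between $\frac{1}{C(N,\alpha)^{1/2_{\alpha}^*}}S$ on both sides, and then to exploit the sign of $\mu$ to rule out attainment. The lower bound is immediate: for any $u\in D_0^{1,2}(\Omega)\setminus\{0\}$, extended by zero to $\mathbb{R}^N$, the hypothesis $\mu<0$ gives $-\mu\int_\Omega\frac{u^2}{|x|^2}\,dx\geq0$, so
\[
\|u\|_\mu^2=\int_\Omega|\nabla u|^2\,dx-\mu\int_\Omega\frac{u^2}{|x|^2}\,dx\ \geq\ \int_{\mathbb{R}^N}|\nabla u|^2\,dx\ \geq\ S\,\|u\|_{2^*}^2 ,
\]
and combining with the Hardy--Littlewood--Sobolev inequality \eqref{eq2.2}, in the form $\|u\|_{2^*}^2\geq C(N,\alpha)^{-1/2_{\alpha}^*}\big(\int_\Omega\int_\Omega\frac{|u(x)|^{2_{\alpha}^*}|u(y)|^{2_{\alpha}^*}}{|x-y|^{\alpha}}\,dx\,dy\big)^{1/2_{\alpha}^*}$, I get $S_{H,\alpha}(\Omega)\geq\frac{1}{C(N,\alpha)^{1/2_{\alpha}^*}}S$ for every $\Omega$.

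For the reverse inequality the decisive idea is that concentrating test functions \emph{away from the origin} turns the Hardy term into a lower order perturbation. I would fix $x_0\in\Omega\setminus\{0\}$, a cut-off $\varphi\in C_0^\infty(\Omega)$ with $\varphi\equiv1$ near $x_0$ and $0\notin\operatorname{supp}\varphi$, let $U$ be the Talenti instanton for $S$ normalized by $-\Delta U=U^{2^*-1}$ (so $\int_{\mathbb{R}^N}|\nabla U|^2\,dx=\int_{\mathbb{R}^N}U^{2^*}\,dx=S^{N/2}$), and set $u_\varepsilon=\varphi(x)\,\varepsilon^{(2-N)/2}U\big((x-x_0)/\varepsilon\big)\in D_0^{1,2}(\Omega)$. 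The standard Brezis--Nirenberg truncation estimates give $\int_\Omega|\nabla u_\varepsilon|^2\,dx=S^{N/2}+o(1)$; since $|x|^{-2}$ is bounded on $\operatorname{supp}\varphi$ while $\|u_\varepsilon\|_{L^2(\operatorname{supp}\varphi)}\to0$ as $\varepsilon\to0$, we get $\int_\Omega\frac{u_\varepsilon^2}{|x|^2}\,dx\to0$; and --- the crucial observation --- since $\frac{N-2}{2}\cdot 2_{\alpha}^*=\frac{2N-\alpha}{2}$ and $2_{\alpha}^*\cdot\frac{2N}{2N-\alpha}=2^*$, the function $U^{2_{\alpha}^*}$ is exactly an extremal for the Hardy--Littlewood--Sobolev inequality with $t=r=\frac{2N}{2N-\alpha}$; rescaling $x=x_0+\varepsilon z$, $y=x_0+\varepsilon w$ and using dominated convergence (the double integral of $U^{2_{\alpha}^*}$ being finite) yields
\[
\int_\Omega\int_\Omega\frac{u_\varepsilon(x)^{2_{\alpha}^*}u_\varepsilon(y)^{2_{\alpha}^*}}{|x-y|^{\alpha}}\,dx\,dy\ \longrightarrow\ \int_{\mathbb{R}^N}\int_{\mathbb{R}^N}\frac{U(z)^{2_{\alpha}^*}U(w)^{2_{\alpha}^*}}{|z-w|^{\alpha}}\,dz\,dw=C(N,\alpha)\Big(\int_{\mathbb{R}^N}U^{2^*}\,dz\Big)^{\frac{2N-\alpha}{N}}=C(N,\alpha)\,S^{\frac{2N-\alpha}{2}}.
\]
Passing to the limit,
\[
S_{H,\alpha}(\Omega)\ \leq\ \frac{\|u_\varepsilon\|_\mu^2}{\Big(\int_\Omega\int_\Omega\frac{u_\varepsilon^{2_{\alpha}^*}(x)u_\varepsilon^{2_{\alpha}^*}(y)}{|x-y|^{\alpha}}dx\,dy\Big)^{1/2_{\alpha}^*}}\ \xrightarrow[\varepsilon\to0]{}\ \frac{S^{N/2}}{\big(C(N,\alpha)\,S^{\frac{2N-\alpha}{2}}\big)^{1/2_{\alpha}^*}}=\frac{1}{C(N,\alpha)^{1/2_{\alpha}^*}}\,S ,
\]
since $\frac{2N-\alpha}{2\cdot 2_{\alpha}^*}=\frac{N-2}{2}$ and $S^{N/2-(N-2)/2}=S$. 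This, with the lower bound, gives $S_{H,\alpha}(\Omega)=\frac{1}{C(N,\alpha)^{1/2_{\alpha}^*}}S$.

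For non-attainment I would argue by contradiction. If some $u\in D_0^{1,2}(\Omega)\setminus\{0\}$ realized the infimum then, replacing $u$ by $|u|$, I may assume $u\geq0$; but then every inequality in the lower bound chain above must be an equality, in particular $-\mu\int_\Omega\frac{u^2}{|x|^2}\,dx=0$. Since $\mu<0$ this forces $\int_\Omega\frac{u^2}{|x|^2}\,dx=0$, hence $u\equiv0$, a contradiction; and since this argument only uses the sign of $\mu$, it holds for every admissible $\Omega$.

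The step I expect to be the main obstacle is the nonlocal asymptotics in the upper bound: one must check that the truncation changes $\int_\Omega\int_\Omega\frac{u_\varepsilon^{2_{\alpha}^*}u_\varepsilon^{2_{\alpha}^*}}{|x-y|^{\alpha}}$ only by $o(1)$ and that the rescaled limit equals \emph{exactly} $C(N,\alpha)S^{(2N-\alpha)/2}$, which rests on the explicit form of the HLS extremals, the two exponent identities above, and the normalization $\int_{\mathbb{R}^N}U^{2^*}=S^{N/2}$. Everything else is elementary once one commits to concentrating at a point $x_0\neq0$, which is exactly the device that makes the Hardy contribution negligible in the limit.
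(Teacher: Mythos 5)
Your proof is correct and follows the same overall strategy as the paper: the Hardy term with $\mu<0$ has the favourable sign, so the lower bound comes from chaining the HLS inequality with the Sobolev inequality, and the upper bound comes from concentrating the Aubin–Talenti instanton away from the origin so that the Hardy contribution becomes negligible. Within that strategy you execute two steps differently, both to your advantage. First, for the upper bound the paper concentrates $u_\varepsilon$ at the origin (to reuse the Gao–Yang asymptotics verbatim) and then translates by $-2\xi$ with $|\xi|$ large to bound $|x|^{-2}\leq|\xi|^{-2}$ on the support; you instead concentrate directly at a point $x_0\neq0$ and compute the limit of the double convolution integral by rescaling plus dominated convergence, using that $U^{2^*_\alpha}$ is exactly an HLS extremal. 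This avoids the explicit $O(\varepsilon^{\cdot})$ bookkeeping (and, incidentally, does not actually need the $N\geq4$ split the paper makes between $O(\varepsilon^2)$ and $O(\varepsilon^2|\ln\varepsilon|)$, since all you use is $\|u_\varepsilon\|_{L^2(\operatorname{supp}\varphi)}\to0$). Second, for non-attainment the paper argues that a minimizer would force equality $\int|\nabla u|^2=S\|u\|_{2^*}^2$ and hence contradict the non-attainment of $S$ on a bounded domain, whereas you go to the more elementary equality $-\mu\int_\Omega u^2/|x|^2=0$, which with $\mu<0$ immediately gives $u\equiv0$; both come from the same equality chain, but yours is self-contained and does not invoke the classical fact about $S$. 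One negligible remark: the reduction to $u\geq0$ in your non-attainment argument is unnecessary, since the equality chain already applies to any minimizer.
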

\begin{proof}
Assume that $U(x)=\frac{[N(N-2)]^{\frac{N-2}{4}}}{(1+|x|^2)^{\frac{N-2}{2}}}$.
Let $\phi\in C_0^\infty(\Omega)$ be a negative function such that
$\phi(x)=1$, $\forall x\in B(0,\delta)$, $\phi(x)=0$, $\forall x\in \Omega\setminus B(0,2\delta)$ and $0\leq \phi(x)\leq 1$, $\forall x\in \Omega$, where $2\delta<|\xi|$ and $B(0,3|\xi|)\subset\Omega$.
For $\varepsilon>0$, we define,
$$\begin{aligned}
u_\varepsilon(x):=\psi(x)U_\varepsilon(x) ~~\text{where}~~U_\varepsilon(x) :=\varepsilon^{\frac{2-N}{2}}U\Big(\frac{x}{\varepsilon}\Big).\end{aligned}$$
It follows from \cite{Gao-Yang2018SCM} that, as $\varepsilon\to0^+$,
$$\int_\Omega|\nabla u_\varepsilon|^2dx=S^{\frac N2}+O(\varepsilon^{N-2})=C(N,\alpha)^{\frac{N(N-2)}{2(2N-\alpha)}}S_{H,L}^{\frac N2}+O(\varepsilon^{N-2})  ,$$
$$\int_\Omega|u_\varepsilon|^2dx = \begin{cases} O(\varepsilon^2|\ln\varepsilon|),&\text{ if }\quad N=4,\\O(\varepsilon^2),&\text{ if }\quad N\geqslant5,\end{cases}$$
and
$$\displaystyle{\int_{\Omega}}\displaystyle{\int_{\Omega}} \frac{|u_\varepsilon(x)|^{2_{\alpha}^*}|u_\varepsilon(y)|^{2_{\alpha}^*} }{|x-y|^{\alpha}}dxdy \geq C(N,\alpha)^{\frac{N}{2}} S_{H,L}^{\frac{2N-\alpha}{2}} -O(\varepsilon^{\frac{2N-\alpha}{2}}) = C(N,\alpha) S^{\frac{2N-\alpha}{2}} -O(\varepsilon^{\frac{2N-\alpha}{2}}).$$
Denote $v_\varepsilon(x)=u_\varepsilon(x+2\xi)$, by the invariance by translation of the integral,
$$\int_{\Omega}\frac{|v_\varepsilon(x)|^2}{|x|^2}= \int_{\Omega}\frac{|u_\varepsilon(x+2\xi)|^2}{|x|^2}\leq\frac{1}{|\xi|^2} \int_{\Omega}|u_\varepsilon(x)|^2 .$$
Combining this and $\mu<0$,
\begin{equation*}
\begin{aligned}
S_{H,\alpha}(\Omega)&\leq \frac{\int_{\Omega}|\nabla v_\varepsilon|^2dx -\mu\int_{\Omega}\frac{v_\varepsilon^2}{x^2} dx
}{\Big(\int_{\Omega}\int_{\Omega} \frac{|v_\varepsilon(x)|^{2_{\alpha}^*}|v_\varepsilon(y)|^{2_{\alpha}^*} }{|x-y|^{\alpha}}dxdy \Big)^{1/2_{\alpha}^*}}\\
&\leq \frac{\int_{\Omega}|\nabla u_\varepsilon(x)|^2dx -\frac{\mu}{|\xi|^2} \int_{\Omega}|u_\varepsilon(x)|^2dx
}{\Big(\int_{\Omega}\int_{\Omega} \frac{|u_\varepsilon(x)|^{2_{\alpha}^*}|u_\varepsilon(y)|^{2_{\alpha}^*} }{|x-y|^{\alpha}}dxdy \Big)^{1/2_{\alpha}^*}}.
\end{aligned}
\end{equation*}
Now we distinguish the following cases:
\begin{itemize}
\item[(\romannumeral1)] In the case $N=4$,
\begin{equation*}
\begin{aligned}
S_{H,\alpha}(\Omega)&\leq \frac{\int_{\Omega}|\nabla u_\varepsilon(x)|^2dx -\frac{\mu}{|\xi|^2} \int_{\Omega}|u_\varepsilon(x)|^2dx
}{\Big(\int_{\Omega}\int_{\Omega} \frac{|u_\varepsilon(x)|^{2_{\alpha}^*}|u_\varepsilon(y)|^{2_{\alpha}^*} }{|x-y|^{\alpha}}dxdy \Big)^{1/2_{\alpha}^*}}\\
&\leq\frac{ S^{2}+O(\varepsilon^{2}) +O(\varepsilon^2|\ln\varepsilon|) }{\Big(C(N,\alpha) S^{\frac{8-\alpha}{2}} -O(\varepsilon^{\frac{8-\alpha}{2}}) \Big)^{\frac{2}{8-\alpha}}}\\
&\leq \frac{1}{C(N,\alpha)^{\frac{2}{8-\alpha}}}S.
\end{aligned}
\end{equation*}

\item[(\romannumeral2)] In the case $N\geq5 $,
\begin{equation*}
\begin{aligned}
S_{H,\alpha}(\Omega)&\leq \frac{\int_{\Omega}|\nabla u_\varepsilon(x)|^2dx -\frac{\mu}{|\xi|^2} \int_{\Omega}|u_\varepsilon(x)|^2dx
}{\Big(\int_{\Omega}\int_{\Omega} \frac{|u_\varepsilon(x)|^{2_{\alpha}^*}|u_\varepsilon(y)|^{2_{\alpha}^*} }{|x-y|^{\alpha}}dxdy \Big)^{1/2_{\alpha}^*}}\\
&\leq\frac{ S^{\frac N2}+O(\varepsilon^{N-2})+O(\varepsilon^2) }{\Big(C(N,\alpha) S^{\frac{2N-\alpha}{2}} -O(\varepsilon^{\frac{2N-\alpha}{2}}) \Big)^{1/2_{\alpha}^*}}\\
&\leq \frac{1}{C(N,\alpha)^{1/2_{\alpha}^*}}S.
\end{aligned}
\end{equation*}
\end{itemize}
Thus, $S_{H,\alpha}(\Omega)\leq \frac{1}{C(N,\alpha)^{1/2_{\alpha}^*}}S$. We deduce  from \cite{Gao-Yang2018SCM} and $\mu<0$ that $\frac{1}{C(N,\alpha)^{1/2_{\alpha}^*}}S \leq S_{H,\alpha}(\Omega)$. Then $ S_{H,\alpha}(\Omega)=\frac{1}{C(N,\alpha)^{1/2_{\alpha}^*}}S$. Moreover, the infmum $S_{H,\alpha}(\Omega)$ cannot be attained because such a minimizer would contradict the fact that $S$  is never achieved except when $\Omega=\mathbb{R}^N$.
\end{proof}

\section{Linear perturbation}
In this section, we consider the existence  of the solutions to the following equation
\begin{equation}\label{eq3.1}
\left\{\begin{array}{ll}
-\Delta u(x)-\mu\frac{u}{|x|^2}=
\Big( \displaystyle{\int_{\Omega}} \frac{|u(y)|^{2_{\alpha}^*} }{|x-y|^{\alpha}}dy\Big) |u|^{{2_{\alpha}^*}-1}+\lambda u,     \,  &x\in  \Omega, \\
u(x) =0,    \, &  x\in \partial \Omega.
\end{array} \right.
\end{equation}
The energy functional corresponding to \eqref{eq3.1} is
\begin{equation*}
\mathcal{I}(u) := \frac{1}{2} \int_{\Omega} |\nabla u|^2dx -\frac{\mu }{2} \int_{\Omega} \frac{|u|^2}{|x|^2} dx-  \frac{\lambda}{2} \int_{\Omega} | u|^{2}dx -\frac{1}{2\cdot{2_{\alpha}^*}}\int_{\Omega}\int_{\Omega} \frac{|u(x)|^{2_{\alpha}^*}|u(y)|^{2_{\alpha}^*} }{|x-y|^{\alpha}}dxdy.
\end{equation*}

It is not difficult to check that the functional $\mathcal{I}$ satisfies the Mountain-Pass Geometry, that is

\par
\begin{Lem}\label{Lem3.1}
The functional $\mathcal{I}$ has the mountain pass geometry:
\begin{itemize}
\item[$(1)$]  There exist $\beta, \rho>0$ such that $\mathcal{I}(u)\geq \beta$ whenever $||u||=\rho$.
\item[$(2)$]  There is an $e\in H_0^1(\Omega)$ with $||e||\geq \rho$ such that $\mathcal{I}(e)<0$.
\end{itemize}
\end{Lem}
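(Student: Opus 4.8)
The plan is to verify the two geometric conditions directly, using only three facts already at our disposal: the variational characterization $\lambda_1=\inf_{u\in H\setminus\{0\}}\|u\|_\mu^2\big/\!\int_\Omega u^2\,dx$ of the first eigenvalue of $\mathcal{L}_\mu$, the combined Hardy--Littlewood--Sobolev and Sobolev estimate \eqref{eq2.2}, and the equivalence of $\|\cdot\|$ and $\|\cdot\|_\mu$.

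For $(1)$, I would first exploit $\lambda\in(0,\lambda_1)$ to bound the quadratic part of $\mathcal{I}$ from below: since $\int_\Omega u^2\,dx\le\lambda_1^{-1}\|u\|_\mu^2$, we get $\|u\|_\mu^2-\lambda\int_\Omega u^2\,dx\ge(1-\lambda/\lambda_1)\|u\|_\mu^2$ with $1-\lambda/\lambda_1>0$. For the nonlocal term, \eqref{eq2.2} together with $H_0^1(\Omega)\hookrightarrow L^{2^*}(\Omega)$ yields $\int_\Omega\int_\Omega\frac{|u(x)|^{2_\alpha^*}|u(y)|^{2_\alpha^*}}{|x-y|^\alpha}\,dx\,dy\le C\|u\|_\mu^{2\cdot2_\alpha^*}$ for some constant $C>0$. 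Hence $\mathcal{I}(u)\ge\frac12(1-\lambda/\lambda_1)\|u\|_\mu^2-\frac{C}{2\cdot2_\alpha^*}\|u\|_\mu^{2\cdot2_\alpha^*}$, and because $2\cdot2_\alpha^*=\frac{2(2N-\alpha)}{N-2}>2$ the right-hand side has the form $a s^2-b s^{2\cdot2_\alpha^*}$ with $a,b>0$, which stays above a positive constant on a small sphere $\{\|u\|_\mu=\rho'\}$; passing back through the norm equivalence gives the desired $\beta,\rho>0$.

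For $(2)$, I would fix any $u_0\in H_0^1(\Omega)\setminus\{0\}$ and look at $t\mapsto\mathcal{I}(tu_0)$, $t>0$. Since $f(u)=u$ here, this is a function of $t$ of the shape $A t^2-B t^{2\cdot2_\alpha^*}$ with $B=\frac{1}{2\cdot2_\alpha^*}\int_\Omega\int_\Omega\frac{|u_0(x)|^{2_\alpha^*}|u_0(y)|^{2_\alpha^*}}{|x-y|^\alpha}\,dx\,dy>0$ (the double integral being strictly positive whenever $u_0\not\equiv0$), and since $2\cdot2_\alpha^*>2$ we have $\mathcal{I}(tu_0)\to-\infty$ as $t\to+\infty$. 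Choosing $t$ so large that simultaneously $\|tu_0\|\ge\rho$ and $\mathcal{I}(tu_0)<0$ and setting $e:=tu_0$ completes the verification.

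I do not expect a genuine obstacle in this lemma: it is the standard Mountain-Pass geometry check. The only two places needing care are the strict positivity of the quadratic form $u\mapsto\|u\|_\mu^2-\lambda\int_\Omega u^2\,dx$, which is exactly where $\lambda<\lambda_1$ enters, and the superquadratic homogeneity of the convolution term ($2\cdot2_\alpha^*>2$), which makes it both negligible near the origin and dominant along rays to infinity. The real analytical work of the paper --- establishing a local Palais--Smale condition below the critical threshold built from $S_{H,\alpha}$ --- only comes afterwards.
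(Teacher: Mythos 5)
Your argument is correct and essentially identical to the paper's: for $(1)$ you combine the eigenvalue bound $\int_\Omega u^2\le\lambda_1^{-1}\|u\|_\mu^2$ with the Hardy--Littlewood--Sobolev/Sobolev estimate to obtain $\mathcal{I}(u)\ge\tfrac12(1-\lambda/\lambda_1)\|u\|_\mu^2-C\|u\|_\mu^{2\cdot2_\alpha^*}$, and for $(2)$ you send $t\to\infty$ along a ray $tu_0$ and use the dominance of the $t^{2\cdot2_\alpha^*}$ term. No gaps; this matches the paper's proof of Lemma 4.1.
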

\begin{proof}
$(1)$ By $0<\mu < \bar{\mu}$, the Sobolev embedding and the Hardy-Littlewood-Sobolev inequality, for all
$u\in H_0^1(\Omega)\setminus\{0\}$ we have
$$\begin{aligned}
\mathcal{I}(u)&=\frac{1}{2}\|u\|_\mu^2  -  \frac{\lambda}{2} \int_{\Omega} | u|^2dx -\frac{1}{2\cdot{2_{\alpha}^*}}\int_{\Omega}\int_{\Omega} \frac{|u(x)|^{2_{\alpha}^*}|u(y)|^{2_{\alpha}^*} }{|x-y|^{\alpha}}dxdy\\
&\geq \Big(\frac{1}{2}- \frac{\lambda}{2\lambda_1} \Big)\|u\|_\mu^2   -\frac{1}{2\cdot{2_{\alpha}^*}}\int_{\Omega}\int_{\Omega} \frac{|u(x)|^{2_{\alpha}^*}|u(y)|^{2_{\alpha}^*} }{|x-y|^{\alpha}}dxdy\\
&\geq \frac{1}{2} \Big( 1- \frac{\lambda}{\lambda_1}\Big) \|u\|_\mu^2    -\frac1{2\cdot2_\alpha^*}C_2\|u\|^{2(\frac{2N-\alpha}{N-2})}_\mu.
\end{aligned}$$
It follows from $2<2(\frac{2N-\alpha}{N-2})$ that  we can choose some $\alpha,\rho>0$ such that $\mathcal{I}(u)\geqslant\alpha$ for $\|u\|=\rho.$

$(2)$  For some $u_0\in H_0^1(\Omega)\setminus\{0\}$, we have
$$\begin{aligned}
\mathcal{I}(tu_0)&=\frac{t^2}{2}\|u_0\|_\mu^2-\frac{\lambda t^2}{2}\int_\Omega |u_0|^2dx-\frac{t^{2\cdot2_\alpha^*}}{2\cdot2_\alpha^*} \int_\Omega\int_\Omega\frac{|u_0(x)|^{2_\alpha^*}|u_0(y)|^{2_\alpha^*}}{|x-y|^\alpha}dxdy
\end{aligned}$$
for $t > $ large enough. Then we may take an $e:= t_*u_0$ for some $t_*0 > 0 $ and  $(2)$  follows.
\end{proof}

\begin{Lem}\label{Lem3.2}
Let $0<\mu < \bar{\mu}$, $0<\lambda<\lambda_1$ and $\{u_n\}$ is a $(PS)_c$ sequence of $\mathcal{I}$, then ${u_n}$ is bounded. Moreover, assume that $u_0\in H_0^1(\Omega)$ is the weak limit of ${u_n}$, then $u_0$ is a weak solution of equation \eqref{eq3.1}.
\end{Lem}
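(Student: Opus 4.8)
This is the standard pair of assertions attached to a critical variational problem, and I would prove it in two parts: a Pohozaev–free energy estimate for boundedness, then weak continuity of $\mathcal I'$ for the passage to the limit. For boundedness, I would use that a $(PS)_c$ sequence satisfies $\mathcal I(u_n)=c+o(1)$ and $\langle\mathcal I'(u_n),u_n\rangle=o(1)\|u_n\|$, and form the combination
\[
\mathcal I(u_n)-\frac{1}{2\cdot 2_\alpha^*}\langle\mathcal I'(u_n),u_n\rangle
=\left(\frac12-\frac{1}{2\cdot 2_\alpha^*}\right)\left(\|u_n\|_\mu^2-\lambda\int_\Omega |u_n|^2\,dx\right),
\]
in which the doubly nonlocal term cancels identically. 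Since $2_\alpha^*>1$ the prefactor is positive, and by the variational characterization $\lambda_1\int_\Omega|u_n|^2\,dx\le\|u_n\|_\mu^2$ the right–hand side is bounded below by $\big(\tfrac12-\tfrac{1}{2\cdot 2_\alpha^*}\big)\big(1-\tfrac{\lambda}{\lambda_1}\big)\|u_n\|_\mu^2$, with $1-\tfrac{\lambda}{\lambda_1}>0$ because $\lambda<\lambda_1$. Hence $\|u_n\|_\mu^2\le C(1+\|u_n\|)$, and since $\|\cdot\|_\mu$ is equivalent to $\|\cdot\|$ by the Hardy inequality, $\{u_n\}$ is bounded in $H_0^1(\Omega)$.

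For the second part, I would extract a subsequence with $u_n\rightharpoonup u_0$ in $H_0^1(\Omega)$, $u_n\to u_0$ in $L^p(\Omega)$ for every $p\in[1,2^*)$ and $u_n\to u_0$ a.e., and show that $\langle\mathcal I'(u_n),\varphi\rangle\to\langle\mathcal I'(u_0),\varphi\rangle$ for every $\varphi\in C_0^\infty(\Omega)$; since the left side tends to $0$, density then gives $\mathcal I'(u_0)=0$. The term $\int_\Omega\nabla u_n\cdot\nabla\varphi\,dx$ passes to the limit by weak convergence; $\lambda\int_\Omega u_n\varphi\,dx$ by strong $L^2$ convergence; and $\mu\int_\Omega\frac{u_n\varphi}{|x|^2}\,dx=\mu\int_\Omega\frac{u_n}{|x|}\cdot\frac{\varphi}{|x|}\,dx$ passes to the limit because $\frac{u_n}{|x|}\rightharpoonup\frac{u_0}{|x|}$ in $L^2(\Omega)$ (a bounded linear operator by Hardy's inequality) while $\frac{\varphi}{|x|}\in L^2(\Omega)$ for $N\ge3$.

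The only delicate point is the convolution term, which I would write as
\[
\int_\Omega\int_\Omega\frac{|u_n(y)|^{2_\alpha^*}\,|u_n(x)|^{2_\alpha^*-2}u_n(x)\varphi(x)}{|x-y|^\alpha}\,dx\,dy
=\int_\Omega \Big(\int_\Omega\frac{|u_n(y)|^{2_\alpha^*}}{|x-y|^\alpha}\,dy\Big)|u_n(x)|^{2_\alpha^*-2}u_n(x)\varphi(x)\,dx.
\]
Since $\|u_n\|_{2^*}$ is bounded and $2_\alpha^*\cdot\frac{2N}{2N-\alpha}=2^*$, the sequence $|u_n|^{2_\alpha^*}$ is bounded in $L^{2N/(2N-\alpha)}(\Omega)$ and, with the a.e. convergence, $|u_n|^{2_\alpha^*}\rightharpoonup|u_0|^{2_\alpha^*}$ weakly there; by the Hardy–Littlewood–Sobolev inequality the Riesz potential is a bounded linear map $L^{2N/(2N-\alpha)}(\Omega)\to L^{2N/\alpha}(\Omega)$, so $\int_\Omega|x-y|^{-\alpha}|u_n(y)|^{2_\alpha^*}dy\rightharpoonup\int_\Omega|x-y|^{-\alpha}|u_0(y)|^{2_\alpha^*}dy$ in $L^{2N/\alpha}(\Omega)$. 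On the other hand $(2_\alpha^*-1)\cdot\frac{2N}{2N-\alpha}<2^*$ (equivalent to $N>2$), hence $u_n\to u_0$ in $L^{(2_\alpha^*-1)\cdot 2N/(2N-\alpha)}(\Omega)$, and continuity of the Nemytskii map $t\mapsto|t|^{2_\alpha^*-2}t$ together with $\varphi\in L^\infty$ yield $|u_n|^{2_\alpha^*-2}u_n\varphi\to|u_0|^{2_\alpha^*-2}u_0\varphi$ strongly in $L^{2N/(2N-\alpha)}(\Omega)=\big(L^{2N/\alpha}(\Omega)\big)'$. Pairing the weakly convergent factor with the strongly convergent one gives the desired limit, so $\langle\mathcal I'(u_0),\varphi\rangle=0$ and $u_0$ solves \eqref{eq3.1} weakly.

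The main obstacle is exactly this last step: organizing the doubly nonlocal term as a product of a weakly convergent factor (the Riesz potential, controlled by the Hardy–Littlewood–Sobolev inequality) and a strongly convergent factor (controlled by the compact Sobolev embedding, available because all relevant exponents are strictly subcritical), and checking that their integrability exponents are conjugate; the boundedness step and the other terms are entirely routine.
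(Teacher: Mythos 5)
Your proposal is correct and follows essentially the same route as the paper: the same linear combination $\mathcal I(u_n)-\tfrac{1}{2\cdot 2_\alpha^*}\langle\mathcal I'(u_n),u_n\rangle$ for boundedness, and the same treatment of the gradient, Hardy, and linear terms for the passage to the limit. On the convolution term your argument is in fact a little tighter than the paper's: the paper states that both $\int_\Omega |x-y|^{-\alpha}|u_n(y)|^{2_\alpha^*}\,dy$ and $|u_n|^{2_\alpha^*-2}u_n$ converge \emph{weakly} and then passes to the limit in their product, which is not a valid general principle; you correctly observe that, since $(2_\alpha^*-1)\cdot\tfrac{2N}{2N-\alpha}<2^*$ for $N\ge3$, the factor $|u_n|^{2_\alpha^*-2}u_n\varphi$ converges \emph{strongly} in $L^{2N/(2N-\alpha)}(\Omega)$, which is the dual of $L^{2N/\alpha}(\Omega)$ where the Riesz potential converges weakly, so the pairing does pass to the limit. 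This weak-versus-strong bookkeeping is exactly what is needed to make the step rigorous, and it matches the intent of the paper's argument.
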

\begin{proof}
Suppose that $\{u_n\}$ is a $(PS)_c$ sequence of $\mathcal{I}$, that is,
\begin{equation*}\label{eq}
\mathcal{I}(u_n)\to c~~\text{ and}~~~\mathcal{I}'(u_n)\to 0~\text{in }~ (H_0^1(\Omega))^{-1}.
\end{equation*}
By $0<\mu < \bar{\mu}$, $0<\lambda<\lambda_1$  and $2<2(\frac{2N-\alpha}{N-2})$, we have that, for some $C_0>0$
$$\begin{aligned}
c+o_n(1)&\geq \mathcal{I}(u_n) -\frac{1}{2\cdot{2_{\alpha}^*}}\langle \mathcal{I}'(u_n), u_n\rangle\\
&=\Big(\frac{1}{2}-\frac{1}{2\cdot{2_{\alpha}^*}} \Big) \Big(\int_{\Omega} | u|^2dx-\mu\int_{\Omega} \frac{|u|^2}{|x|^2} dx -\lambda\int_{\Omega} | u|^2dx \Big)\\
&\geq \Big(\frac{1}{2}-\frac{1}{2\cdot{2_{\alpha}^*}} \Big)C_0 ||u||^2,
\end{aligned}$$
which means that  $\{u_n\}$ is bounded in $H_0^1(\Omega)$.
Up to
a subsequence, there exists $u_0\in H_0^1(\Omega)$ such that $u_n\rightharpoonup u_0$ in $H_0^1(\Omega)$,  $u_n\rightharpoonup  u_0$ in $L^{2}(\Omega, |x|^{-2}dx)$,  $u_n\rightharpoonup  u_0$ in $L^{2^*}(\Omega)$ and  $u_n\to u_0$ a.e. in $\Omega$ as $n\to +\infty$. Then $|u_n|^{2_{\alpha}^*}\rightharpoonup  |u_0|^{2_{\alpha}^*}$ in $L^{\frac{2N}{2N-\alpha}}(\Omega)$ as $n\to +\infty$.
Since the Riesz potential of $u$  defined by
$$I_\alpha(u)(x)=|x|^{-\alpha}\ast u=\int_{\Omega}\frac{u(y)}{|x-y|^{\alpha}}dy$$
is a linear continuous map from $L^{\frac{2N}{2N-\alpha}}(\Omega)$ to $L^{\frac{2N}{\alpha}}(\Omega)$, by the Hardy-Littlewood-Sobolev inequality, we know that
$$\int_{\Omega}\frac{|u_n(y)|^{2_\alpha^*}}{|x-y|^{\alpha}}dy \rightharpoonup \int_{\Omega}\frac{|u_0(y)|^{2_\alpha^*}}{|x-y|^{\alpha}}dy~~\text{in}~ L^{\frac{2N}{-\alpha}}(\Omega)$$
as $n\to +\infty$.   Combining  this with the following fact that, $n\to +\infty$,
$$|u_n|^{2_{\alpha}^*-2}u_n\rightharpoonup  |u_0|^{2_{\alpha}^*-2}u_0~~\text{in}~L^{\frac{2N}{2N-\alpha}}(\Omega),$$
we have that
$$\int_{\Omega}\frac{|u_n(y)|^{2_\alpha^*}}{|x-y|^{\alpha}}dy |u_n|^{2_{\alpha}^*-2}u_n\rightharpoonup \int_{\Omega}\frac{|u_0(y)|^{2_\alpha^*}}{|x-y|^{\alpha}}dy |u_0|^{2_{\alpha}^*-2}u_0~~\text{in}~L^{\frac{2N}{N+2}}(\Omega)$$
as $n\to +\infty$. Since $\mathcal{I}'(u_n)\to 0$ in  $(H_0^1(\Omega))^{-1}$, for any $\phi\in H_0^1(\Omega)$ we have
$$o_n(1)= \int_{\Omega} \nabla u_n \nabla \phi dx - \int_{\Omega} \frac{u_n \phi}{|x|^2} dx -\lambda\int_{\Omega} u_n \phi dx
-\int_{\Omega} \int_{\Omega}\frac{|u_n(y)|^{2_\alpha^*} |u_n(x)|^{2_{\alpha}^*-2}u_n(x)\phi(x) }{|x-y|^{\alpha}}dxdy.$$
Thus
$$\int_{\Omega} \nabla u_0 \phi dx - \int_{\Omega} \frac{u_0 \phi}{|x|^2} dx -\lambda\int_{\Omega} u_0 \phi dx =\int_{\Omega} \int_{\Omega}\frac{|u_0(y)|^{2_\alpha^*} |u_0(x)|^{2_{\alpha}^*-2}u_0(x)\phi(x) }{|x-y|^{\alpha}}dxdy,~~\forall \phi \in H_0^1(\Omega),$$
which yields that $u_0$ is a weak solution of the equation \eqref{eq3.1}.
\end{proof}

\par
\begin{Lem}\label{Lem3.3}
Let $0<\mu < \bar{\mu}$, $0<\lambda<\lambda_1$ and ${u_n}$ is a $(PS)_c$ sequence of $\mathcal{I}$ with
$$c< \frac{N+2-\alpha}{2(2N-\alpha)}S_{H,\mu}^{\frac{2N-\alpha}{N+2-\alpha }},$$
then $\{u_n\}$ has a convergent subsequence in  $H_0^1(\Omega)$.
\end{Lem}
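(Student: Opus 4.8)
The plan is to run the standard Struwe-type splitting of the Palais--Smale sequence below the critical threshold. First, by Lemma~\ref{Lem3.2}, $\{u_n\}$ is bounded in $H_0^1(\Omega)$, hence, up to a subsequence, $u_n\rightharpoonup u_0$ in $H_0^1(\Omega)$, $u_n\to u_0$ in $L^2(\Omega)$ (compact embedding, $\Omega$ bounded), $u_n\to u_0$ a.e.\ in $\Omega$, and $u_0$ is a weak solution of \eqref{eq3.1}. Set $v_n:=u_n-u_0$, so that $v_n\rightharpoonup 0$ in $H_0^1(\Omega)$ and $\|v_n\|_{L^2(\Omega)}\to 0$. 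By weak convergence in the Hilbert space $(H_0^1(\Omega),\langle\cdot,\cdot\rangle)$ one has $\|u_n\|^2=\|u_0\|^2+\|v_n\|^2+o_n(1)$, and by the elementary $L^2$ Brezis--Lieb identity (expand the square and use $v_n\rightharpoonup 0$ in $L^2(\Omega,|x|^{-2}dx)$) one gets $\int_\Omega |x|^{-2}|u_n|^2=\int_\Omega |x|^{-2}|u_0|^2+\int_\Omega |x|^{-2}|v_n|^2+o_n(1)$; together these give $\|u_n\|_\mu^2=\|u_0\|_\mu^2+\|v_n\|_\mu^2+o_n(1)$. For the nonlocal term I would invoke the Brezis--Lieb type lemma for the Choquard nonlinearity (see e.g.\ \cite{Moroz-Van-Schaftingen2013JFA}, \cite{Gao-Yang2018SCM}): since $|u_n|^{2_\alpha^*}$ is bounded in $L^{2N/(2N-\alpha)}(\Omega)$ and converges a.e.\ to $|u_0|^{2_\alpha^*}$,
$$\int_\Omega\!\int_\Omega \frac{|u_n(x)|^{2_\alpha^*}|u_n(y)|^{2_\alpha^*}}{|x-y|^\alpha}\,dxdy=\int_\Omega\!\int_\Omega \frac{|u_0(x)|^{2_\alpha^*}|u_0(y)|^{2_\alpha^*}}{|x-y|^\alpha}\,dxdy+\int_\Omega\!\int_\Omega \frac{|v_n(x)|^{2_\alpha^*}|v_n(y)|^{2_\alpha^*}}{|x-y|^\alpha}\,dxdy+o_n(1).$$

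Next I would test $\langle\mathcal{I}'(u_n),u_n\rangle\to 0$ and the weak formulation of \eqref{eq3.1} against $u_0$, subtract the two, and insert the three splittings above together with $\|v_n\|_{L^2(\Omega)}\to 0$; this yields
$$\|v_n\|_\mu^2-\int_\Omega\!\int_\Omega \frac{|v_n(x)|^{2_\alpha^*}|v_n(y)|^{2_\alpha^*}}{|x-y|^\alpha}\,dxdy=o_n(1).$$
Hence, up to a subsequence, $\|v_n\|_\mu^2\to\ell$ and $\int\!\int |v_n|^{2_\alpha^*}|v_n|^{2_\alpha^*}|x-y|^{-\alpha}\to\ell$ for some $\ell\geq 0$. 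Applying the definition of $S_{H,\alpha}$ to $v_n$ (extended by zero to $\mathbb{R}^N$) gives $S_{H,\alpha}\big(\int\!\int|v_n|^{2_\alpha^*}|v_n|^{2_\alpha^*}|x-y|^{-\alpha}\big)^{1/2_\alpha^*}\leq\|v_n\|_\mu^2$, so in the limit $S_{H,\alpha}\,\ell^{1/2_\alpha^*}\leq\ell$, which forces either $\ell=0$ or $\ell\geq S_{H,\alpha}^{\,2_\alpha^*/(2_\alpha^*-1)}=S_{H,\alpha}^{\,(2N-\alpha)/(N+2-\alpha)}$.

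To exclude $\ell>0$ I would compute the limit energy. Since $u_0$ is a weak solution and $0<\lambda<\lambda_1$,
$$\mathcal{I}(u_0)=\mathcal{I}(u_0)-\tfrac{1}{2\cdot2_\alpha^*}\langle\mathcal{I}'(u_0),u_0\rangle=\Big(\tfrac12-\tfrac{1}{2\cdot2_\alpha^*}\Big)\Big(\|u_0\|_\mu^2-\lambda\!\int_\Omega|u_0|^2\,dx\Big)\geq\Big(\tfrac12-\tfrac{1}{2\cdot2_\alpha^*}\Big)\Big(1-\tfrac{\lambda}{\lambda_1}\Big)\|u_0\|_\mu^2\geq 0.$$
On the other hand, inserting the three splittings into $\mathcal{I}(u_n)$ gives $\mathcal{I}(u_n)=\mathcal{I}(u_0)+\tfrac12\|v_n\|_\mu^2-\tfrac{\lambda}{2}\int_\Omega|v_n|^2\,dx-\tfrac{1}{2\cdot2_\alpha^*}\int\!\int|v_n|^{2_\alpha^*}|v_n|^{2_\alpha^*}|x-y|^{-\alpha}+o_n(1)$, and since $\int_\Omega|v_n|^2\,dx\to 0$ and $\tfrac12-\tfrac{1}{2\cdot2_\alpha^*}=\tfrac{N+2-\alpha}{2(2N-\alpha)}$, passing to the limit yields $c=\mathcal{I}(u_0)+\tfrac{N+2-\alpha}{2(2N-\alpha)}\ell\geq\tfrac{N+2-\alpha}{2(2N-\alpha)}\ell$. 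If $\ell>0$, the lower bound $\ell\geq S_{H,\alpha}^{(2N-\alpha)/(N+2-\alpha)}$ gives $c\geq\tfrac{N+2-\alpha}{2(2N-\alpha)}S_{H,\alpha}^{(2N-\alpha)/(N+2-\alpha)}$, contradicting the hypothesis. Therefore $\ell=0$, i.e.\ $\|v_n\|_\mu\to 0$, and since $\|\cdot\|_\mu$ is equivalent to $\|\cdot\|$ this means $u_n\to u_0$ strongly in $H_0^1(\Omega)$.

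I expect the main obstacle to be the rigorous justification of the Brezis--Lieb splitting for the \emph{nonlocal} Choquard term: while the two local $L^2$ splittings are elementary, the convolution term requires either citing a nonlocal Brezis--Lieb lemma or reproving it via the Hardy--Littlewood--Sobolev inequality combined with a careful almost-everywhere convergence argument for the double integral. A secondary delicate point is that the Hardy potential term is not weakly continuous on $H_0^1(\Omega)$ (the embedding into $L^2(\Omega,|x|^{-2}dx)$ is not compact), so one must rely on the $L^2$ Brezis--Lieb identity rather than on compactness there; this is exactly where the hypothesis $\mu<\bar{\mu}$ enters, via the Hardy inequality which keeps $\|\cdot\|_\mu$ an equivalent norm.
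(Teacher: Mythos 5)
Your proof is correct and follows essentially the same Struwe-type splitting as the paper: weak limit $u_0$ is a solution, Brezis--Lieb decomposition of the gradient, Hardy, and nonlocal Choquard terms for $v_n=u_n-u_0$, identification of the limit $\ell$ of both $\|v_n\|_\mu^2$ and the Choquard double integral from $\langle\mathcal{I}'(u_n),u_n\rangle\to0$, and a contradiction with the energy threshold via the best constant. The only difference is cosmetic: you explicitly derive $\mathcal{I}(u_0)\ge0$ from $\langle\mathcal{I}'(u_0),u_0\rangle=0$ and $\lambda<\lambda_1$, which the paper asserts without justification, and you write the constant as $S_{H,\alpha}$ consistently with its definition, whereas the paper switches to the notation $S_{H,\mu}$ for the same quantity.
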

\begin{proof}
Denote $v_n:=u_n- u_0$, combining  $u_n\rightharpoonup u_0$ in $H_0^1(\Omega)$ with the fact that the embedding $H_0^1(\Omega)\hookrightarrow L^q(\Omega)$ is continuous for   $q\in [1, 2^*]$ and compact for $q\in [1, 2^*)$,
$v_n\rightharpoonup  0$ in $L^{2^*}(\Omega)$, $v_n\to 0$ in $L^q(\Omega)$ for $q\in [1, 2^*)$ and  $v_n\to 0$ a.e. in $\Omega$ as $n\to +\infty$.
By the Brezis-Lieb Lemma
$$\int_{\Omega} |\nabla u_n|^2dx=\int_{\Omega} |\nabla v_n|^2dx + \int_{\Omega} |\nabla u_0|^2dx+o_n(1),$$
$$\int_{\Omega} \frac{|u_n|^2}{|x|^2}dx=\int_{\Omega}\frac{|v_n|^2}{|x|^2}dx + \int_{\Omega} \frac{|u_0|^2}{|x|^2}dx+o_n(1)$$
and
$$\int_{\Omega}\int_{\Omega}\frac{|u_n(y)|^{2_\alpha^*} |u_n(x)|^{2_{\alpha}^*} }{|x-y|^{\alpha}}dxdy=\int_{\Omega}\int_{\Omega}\frac{|v_n(y)|^{2_\alpha^*} |v_n(x)|^{2_{\alpha}^*} }{|x-y|^{\alpha}}dxdy +\int_{\Omega}\int_{\Omega}\frac{|u_0(y)|^{2_\alpha^*} |u_0(x)|^{2_{\alpha}^*} }{|x-y|^{\alpha}}dxdy +o_n(1).$$
Since $\mathcal{I}(u_0)\geq0$ and  $u_n\to u_0$ in $L^q(\Omega)$ for $q\in [1, 2^*)$, we have
\begin{equation}\label{eq3.2}
\begin{aligned}
c\leftarrow \mathcal{I}(u_n)
&= \frac{1}{2} \int_{\Omega} |\nabla v_n|^2dx +\frac{1}{2} \int_{\Omega} |\nabla u_0|^2dx -\frac{\mu }{2} \int_{\Omega} \frac{|v_n|^2}{|x|^2} dx -\frac{\mu }{2} \int_{\Omega} \frac{|u_0|^2}{|x|^2} dx -\frac{\lambda}{2} \int_{\Omega} | u_0|^2dx \\
&-\frac{1}{2\cdot{2_{\alpha}^*}}\int_{\Omega}\int_{\Omega} \frac{|v_n(x)|^{2_{\alpha}^*}|v_n(y)|^{2_{\alpha}^*} }{|x-y|^{\alpha}}dxdy -\frac{1}{2\cdot{2_{\alpha}^*}}\int_{\Omega}\int_{\Omega} \frac{|u_0(x)|^{2_{\alpha}^*}|u_0(y)|^{2_{\alpha}^*} }{|x-y|^{\alpha}}dxdy\\
&=\mathcal{I}(u_0)+\frac{1}{2} \int_{\Omega} |\nabla v_n|^2dx-\frac{\mu }{2} \int_{\Omega} \frac{|v_n|^2}{|x|^2} dx-\frac{1}{2\cdot{2_{\alpha}^*}}\int_{\Omega}\int_{\Omega} \frac{|v_n(x)|^{2_{\alpha}^*}|v_n(y)|^{2_{\alpha}^*} }{|x-y|^{\alpha}}dxdy\\
&\geq \frac{1}{2} \int_{\Omega} |\nabla v_n|^2dx-\frac{\mu }{2} \int_{\Omega} \frac{|v_n|^2}{|x|^2} dx-\frac{1}{2\cdot{2_{\alpha}^*}}\int_{\Omega}\int_{\Omega} \frac{|v_n(x)|^{2_{\alpha}^*}|v_n(y)|^{2_{\alpha}^*} }{|x-y|^{\alpha}}dxdy.
\end{aligned}
\end{equation}
It follows from $\langle \mathcal{I}'(u_0), u_0\rangle =0$ that

\begin{equation}\label{eq3.3}
\begin{aligned}
o_n(1)&=\langle \mathcal{I}'(u_n), u_n\rangle \\
&=\int_{\Omega} |\nabla v_n|^2dx +\int_{\Omega} |\nabla u_0|^2dx -\int_{\Omega} \frac{|v_n|^2}{|x|^2} dx -\int_{\Omega} \frac{|u_0|^2}{|x|^2} dx+\lambda \int_{\Omega} | u_0|^2dx\\
&-\int_{\Omega}\int_{\Omega} \frac{|v_n(x)|^{2_{\alpha}^*}|v_n(y)|^{2_{\alpha}^*} }{|x-y|^{\alpha}}dxdy -\int_{\Omega}\int_{\Omega} \frac{|u_0(x)|^{2_{\alpha}^*}|u_0(y)|^{2_{\alpha}^*} }{|x-y|^{\alpha}}dxdy\\
&=\langle \mathcal{I}'(u_0), u_0\rangle +\int_{\Omega} |\nabla v_n|^2dx -\int_{\Omega} \frac{|v_n|^2}{|x|^2} dx-\int_{\Omega}\int_{\Omega} \frac{|v_n(x)|^{2_{\alpha}^*}|v_n(y)|^{2_{\alpha}^*} }{|x-y|^{\alpha}}dxdy.
\end{aligned}
\end{equation}
From \eqref{eq3.3}, we know there exists a non-negative constant $m$ such that
$$\int_{\Omega} |\nabla v_n|^2dx -\int_{\Omega} \frac{|v_n|^2}{|x|^2} dx\to m$$
and
$$\int_{\Omega}\int_{\Omega} \frac{|v_n(x)|^{2_{\alpha}^*}|v_n(y)|^{2_{\alpha}^*} }{|x-y|^{\alpha}}dxdy \to m$$
as $n \to +\infty$. By \eqref{eq3.2} we have
\begin{equation}\label{eq3.4}
\begin{aligned}
c\geq \frac{N-\alpha+2}{2(2N-\alpha)}m.
\end{aligned}
\end{equation}
We claim that $m=0$. Otherwise, by the definition of the best constant $S_{H,\mu}$, we obtain
$$S_{H,\mu}\Big(\int_{\mathbb{R}^N}\int_{\mathbb{R}^N} \frac{|v_n|^{2_{\alpha}^*}(x)|v_n(y)|^{2_{\alpha}^*} }{|x-y|^{\alpha}}dxdy \Big)^{1/2_{\alpha}^*}\leq \int_{\mathbb{R}^N}|\nabla v_n|^2dx -\mu\int_{\mathbb{R}^N}\frac{v_n^2}{|x|^2} dx,$$
which means $m \geq S_{H,\mu}^{\frac{2N-\alpha}{N+2-\alpha}}$. Then we deduce from \eqref{eq3.4} that
$$\frac{N-\alpha+2}{2(2N-\alpha)}S_{H,\mu}^{\frac{2N-\alpha}{N+2-\alpha}}\leq \frac{N-\alpha+2}{2(2N-\alpha)}m\leq c,$$
which contradicts with the fact that
$$c< \frac{N+2-\alpha}{2(2N-\alpha)}S_{H,\mu}^{\frac{2N-\alpha}{N+2-\alpha}}. $$
Thus $m=0$ and
$$||v_n||^2=\int_{\Omega} |\nabla v_n|^2dx -\int_{\Omega} \frac{|v_n|^2}{|x|^2} dx\to 0$$
as $n\to +\infty$.  Therefore, $||u_n||\to ||u_0||$ as $n\to +\infty$.
\end{proof}

\begin{Lem}\label{Lem3.4}
Let $0<\mu \leq \bar{\mu}-1$, then  there exists $\varepsilon>0$ small enough such that
$$\max_{t\geq0}\mathcal{I}(t\bar{u}_\varepsilon)< \frac{N-\alpha+2}{2(2N-\alpha)}S_{H, \mu}^{\frac{2N-\alpha}{N+2-\alpha}}.$$
\end{Lem}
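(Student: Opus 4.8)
\emph{The plan} is to run the Br\'ezis--Nirenberg critical-level argument adapted to the Hardy potential; the test function must be built from the extremal $u_\mu$ of $S_{H,\alpha}$ (not from the Talenti bubble, which would only give the threshold with $S$ in place of $S_{H,\mu}=S_{H,\alpha}$), so I would take $\bar u_\varepsilon:=\phi\,u_{\mu,\varepsilon}$ with $u_{\mu,\varepsilon}(x)=\varepsilon^{\frac{2-N}{2}}u_\mu(x/\varepsilon)$ and $\phi\in C_0^\infty(\Omega)$, $0\le\phi\le1$, $\phi\equiv1$ on $B(0,\delta)$, $\mathrm{supp}\,\phi\subset B(0,2\delta)\subset\Omega$; unlike in the previous lemma, the bubble must be centered at the origin because of the singularity of $|x|^{-2}$. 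Writing $A_\varepsilon:=\|\bar u_\varepsilon\|_\mu^2$, $B_\varepsilon:=\int_\Omega|\bar u_\varepsilon|^2dx$ and $D_\varepsilon:=\int_\Omega\int_\Omega\frac{|\bar u_\varepsilon(x)|^{2_\alpha^*}|\bar u_\varepsilon(y)|^{2_\alpha^*}}{|x-y|^\alpha}dxdy$, one has $\mathcal{I}(t\bar u_\varepsilon)=\frac{t^2}{2}(A_\varepsilon-\lambda B_\varepsilon)-\frac{t^{2\cdot 2_\alpha^*}}{2\cdot 2_\alpha^*}D_\varepsilon$, and as soon as $A_\varepsilon\to S_{H,\alpha}^{\frac{2N-\alpha}{N+2-\alpha}}>0$ and $B_\varepsilon\to0$, for small $\varepsilon$ the map $t\mapsto\mathcal{I}(t\bar u_\varepsilon)$ has a unique positive maximum point and a direct computation yields
$$\max_{t\ge0}\mathcal{I}(t\bar u_\varepsilon)=\frac{N+2-\alpha}{2(2N-\alpha)}\cdot\frac{(A_\varepsilon-\lambda B_\varepsilon)^{\frac{2N-\alpha}{N+2-\alpha}}}{D_\varepsilon^{\frac{N-2}{N+2-\alpha}}}.$$
Since $S_{H,\mu}=S_{H,\alpha}$, the lemma thus reduces to proving $(A_\varepsilon-\lambda B_\varepsilon)\,D_\varepsilon^{-1/2_\alpha^*}<S_{H,\alpha}$ for some small $\varepsilon>0$.

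\emph{The three asymptotic estimates.} The cutoff $\phi$ modifies $\bar u_\varepsilon$ only on $\{|x|\ge\delta\}$, which after the scaling $z=x/\varepsilon$ becomes $\{|z|\ge\delta/\varepsilon\}$; there \eqref{eq1.8} gives $u_\mu(z)\lesssim|z|^{-\gamma}$ with $\gamma=\sqrt{\bar\mu}+\sqrt{\bar\mu-\mu}$, together with the matching bound $|\nabla u_\mu(z)|\lesssim|z|^{-\gamma-1}$. Since $2\gamma-(N-2)=2\sqrt{\bar\mu-\mu}>0$, the tail integrals of $|\nabla u_\mu|^2$ and of $|z|^{-2}u_\mu^2$ over $\{|z|\ge\delta/\varepsilon\}$ are finite and $O(\varepsilon^{2\sqrt{\bar\mu-\mu}})$, whence, using \eqref{eq1.7},
$$A_\varepsilon\le S_{H,\alpha}^{\frac{2N-\alpha}{N+2-\alpha}}+C\varepsilon^{2\sqrt{\bar\mu-\mu}},\qquad D_\varepsilon\ge S_{H,\alpha}^{\frac{2N-\alpha}{N+2-\alpha}}-C\varepsilon^{2_\alpha^*\sqrt{\bar\mu-\mu}};$$
for $D_\varepsilon$ I would use $0\le\bar u_\varepsilon\le u_{\mu,\varepsilon}$, the identity $\int_{\mathbb{R}^N}\int_{\mathbb{R}^N}|x-y|^{-\alpha}u_{\mu,\varepsilon}^{2_\alpha^*}(x)u_{\mu,\varepsilon}^{2_\alpha^*}(y)\,dxdy=S_{H,\alpha}^{\frac{2N-\alpha}{N+2-\alpha}}$ and a Hardy--Littlewood--Sobolev estimate of the lost mass in terms of $\int_{|x|\ge\delta}u_{\mu,\varepsilon}^{2^*}=O(\varepsilon^{2_\alpha^*\sqrt{\bar\mu-\mu}})$. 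Finally, rescaling gives $B_\varepsilon=\varepsilon^2\int_\Omega\phi(\varepsilon z)^2u_\mu(z)^2dz$, and the lower bound in \eqref{eq1.8} yields $B_\varepsilon\ge c_0\varepsilon^2$ when $\mu<\bar\mu-1$ (then $u_\mu^2\in L^1$ near infinity, since $2\gamma-N=2\sqrt{\bar\mu-\mu}-2>0$) and $B_\varepsilon\ge c_0\varepsilon^2|\ln\varepsilon|$ when $\mu=\bar\mu-1$ (the borderline decay $u_\mu^2\sim|z|^{-N}$), for some $c_0>0$; this dichotomy parallels the $N=4$ versus $N\ge5$ split of the previous lemma.

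\emph{Conclusion, where the hypothesis $\mu\le\bar\mu-1$ enters.} In the admissible range $0<\alpha<N-(N-4)_+$ one always has $2_\alpha^*=\frac{2N-\alpha}{N-2}>2$, and $\sqrt{\bar\mu-\mu}\ge1$ precisely when $\mu\le\bar\mu-1$; hence $2\sqrt{\bar\mu-\mu}\ge2$ and $2_\alpha^*\sqrt{\bar\mu-\mu}>2$, so the error terms in $A_\varepsilon$ and $D_\varepsilon$ are $o(B_\varepsilon)$ as $\varepsilon\to0^+$ (for $\mu<\bar\mu-1$ because $\varepsilon^{2\sqrt{\bar\mu-\mu}}=o(\varepsilon^2)$, for $\mu=\bar\mu-1$ because $\varepsilon^2=o(\varepsilon^2|\ln\varepsilon|)$). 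Setting $S_0:=S_{H,\alpha}^{\frac{2N-\alpha}{N+2-\alpha}}$ and noting $S_0^{\,1-1/2_\alpha^*}=S_{H,\alpha}$, a first-order expansion of the quotient gives
$$\frac{A_\varepsilon-\lambda B_\varepsilon}{D_\varepsilon^{1/2_\alpha^*}}\le S_{H,\alpha}\Big(1-\frac{\lambda B_\varepsilon}{S_0}+o(B_\varepsilon)\Big)<S_{H,\alpha}$$
for $\varepsilon>0$ small enough, which is exactly the required inequality.

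\emph{Main obstacle.} The delicate part is the second step: since $u_\mu$ has no closed form, one must extract from \eqref{eq1.8} (and the associated gradient bound) the sharp power $\varepsilon^{2\sqrt{\bar\mu-\mu}}$ of the cutoff error — a weaker power would not do — together with the two-sided control of $B_\varepsilon$, including the logarithmic borderline at $\mu=\bar\mu-1$, and one must verify $2_\alpha^*\sqrt{\bar\mu-\mu}>2$ over the entire range of $\alpha$. Granted these estimates, the remainder is the classical Br\'ezis--Nirenberg algebra, and the condition $\mu\le\bar\mu-1$ is seen to be exactly the threshold that makes the favourable term $-\lambda B_\varepsilon$ dominate the cutoff errors.
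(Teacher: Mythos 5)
Your plan is essentially the same as the paper's: cut off the rescaled extremal $u_{\mu,\varepsilon}$ at the origin, estimate $A_\varepsilon,B_\varepsilon,D_\varepsilon$ using the decay \eqref{eq1.8}, and run the Br\'ezis--Nirenberg algebra with the dichotomy at $\mu=\bar\mu-1$. Your parametrization by $\sqrt{\bar\mu-\mu}$ is related to the paper's $a$ by $(N-2)a=2\sqrt{\bar\mu-\mu}$, so all your exponents match theirs, and your verification that $2_\alpha^*\sqrt{\bar\mu-\mu}>2$ throughout the range is correct (and, incidentally, is the right dominant exponent for the lower bound on $D_\varepsilon$, coming from the cross term $\mathbb{B}$ in the paper's notation).

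Two small points. First, in your intermediate step you write $\int_{|x|\ge\delta}u_{\mu,\varepsilon}^{2^*}=O(\varepsilon^{2_\alpha^*\sqrt{\bar\mu-\mu}})$; the correct exponent for that integral is $2^*\sqrt{\bar\mu-\mu}$, and it is only after applying Hardy--Littlewood--Sobolev and raising to the power $\frac{2N-\alpha}{2N}$ that the exponent $2^*\sqrt{\bar\mu-\mu}\cdot\frac{2N-\alpha}{2N}=2_\alpha^*\sqrt{\bar\mu-\mu}$ appears; your stated conclusion $D_\varepsilon\ge S_0-C\varepsilon^{2_\alpha^*\sqrt{\bar\mu-\mu}}$ is right, but the bookkeeping should be made precise. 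Second, you invoke a pointwise gradient bound $|\nabla u_\mu(z)|\lesssim|z|^{-\gamma-1}$ which is not recorded in \eqref{eq1.8}. The paper avoids needing it: it writes $\int\eta^2|\nabla u_{\mu,\varepsilon}|^2+2\int\eta u_{\mu,\varepsilon}\nabla\eta\cdot\nabla u_{\mu,\varepsilon}=-\int\eta^2 u_{\mu,\varepsilon}\Delta u_{\mu,\varepsilon}$ and substitutes the equation \eqref{eq1.6}, which reduces the estimate of $A_\varepsilon$ to the pointwise bound on $u_\mu$ alone plus the cutoff term $\int|\nabla\eta|^2u_{\mu,\varepsilon}^2$; if you want to keep your gradient-bound route you should justify the decay of $\nabla u_\mu$ separately (e.g.\ by interior elliptic estimates on annuli), whereas the integration-by-parts trick sidesteps this entirely.
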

\begin{proof}
Let us fix $\delta>0$ small enough such that $B_{4\delta}(0)\subset \Omega $ and let $\eta\in C_0^\infty(\R^N)$ be a cut-off function such that $0\leq \eta \leq 1$ in $\R^N$, $\eta = 1$ in $B_{\delta}(0)$,  $\eta = 0$ in $B_{2\delta}^c(0)$.

Define
$$\bar{u}_\varepsilon ( x) = \eta ( x) u_{\mu,\varepsilon} ( x), ~~\text{where}~~ u_{\mu,\varepsilon} =\varepsilon^{\frac{2-N}{2} }u_{\mu}\big(\frac{x}{\varepsilon}\big).$$
Note that $u_{\mu,\varepsilon}$ is also a positive solution of \eqref{eq3.1} and also a minimizer for $S_{H,\alpha}$
\begin{equation}\label{eq3.5}
\int_{\mathbb{R}^N}|\nabla u_{\mu,\varepsilon}|^2dx -\mu\int_{\mathbb{R}^N}\frac{u_{\mu,\varepsilon}^2}{|x|^2} dx=
\int_{\mathbb{R}^N}\int_{\mathbb{R}^N} \frac{|u_{\mu,\varepsilon}(x)|^{2_{\alpha}^*}|u_{\mu,\varepsilon}(y)|^{2_{\alpha}^*} }{|x-y|^{\alpha}}dxdy= S_{H,\alpha}^{ \frac{2N-\alpha}{N-\alpha+2}}.
\end{equation}
Set
$a=\sqrt{1-\frac{4\mu}{(N-2)^2}}$, then $\frac{\gamma'}{\sqrt{\bar{\mu}}}=1-a$ and  $\frac{\gamma}{\sqrt{\bar{\mu}}}=1+a$. In the spirit of  \cite{Brezis-Nirenberg1983CPAM}, we have
\begin{equation}\label{eq3.6}
\int_\Omega\left(|\nabla \bar{u}_\varepsilon|^2-\mu\frac{\bar{u}_\varepsilon^2}{|x|^2}\right)\mathrm{d}x \leq S_{H,\mu}^{ \frac{2N-\alpha}{N-\alpha+2}}+O(\varepsilon^{(N-2)a}) .\end{equation}
Indeed, by \eqref{eq3.5} we get that
\begin{equation*}
\begin{aligned}
&\int_\Omega\left(|\nabla \bar{u}_\varepsilon|^2-\mu\frac{\bar{u}_\varepsilon^2}{|x|^2}\right)dx\\
&=\int_\Omega \eta^2|\nabla u_{\mu,\varepsilon}|^2+ \int_\Omega |\nabla\eta|^2 u_{\mu,\varepsilon}^2 dx +2 \int_\Omega \eta u_{\mu,\varepsilon}\big(\nabla\eta\cdot\nabla u_{\mu,\varepsilon}\big)dx-\mu\int_\Omega\frac{\eta^2u_{\mu,\varepsilon}^2}{|x|^2}dx\\
&=\int_{\R^N}\int_{\R^N}\frac{|u_{\mu,\varepsilon}(y)|^{2_\alpha^*}| |u_{\mu,\varepsilon}(x)|^{2_\alpha^*}\eta^2(x)}{|x-y|^\alpha} dxdy +\mu\int_{\R^N}\frac{\eta^2u_{\mu,\varepsilon}^2}{|x|^2}dx+ \int_\Omega |\nabla\eta|^2 u_{\mu,\varepsilon}^2 dx-\mu\int_\Omega\frac{\eta^2u_{\mu,\varepsilon}^2}{|x|^2}dx\\
&\leq\int_{\R^N}\int_{\R^N}\frac{|u_{\mu,\varepsilon}(y)|^{2_\alpha^*}| |u_{\mu,\varepsilon}(x)|^{2_\alpha^*}}{|x-y|^\alpha}dxdy + \int_\Omega |\nabla\eta|^2 u_{\mu,\varepsilon}^2 dx\\
&=S_{H,\mu}^{ \frac{2N-\alpha}{N-\alpha+2}} +\int_\Omega |\nabla\eta|^2 u_{\mu,\varepsilon}^2 dx\\
&:=S_{H,\mu}^{ \frac{2N-\alpha}{N-\alpha+2}} +\mathbb{A},
\end{aligned}
\end{equation*}
where
\begin{equation*}
\begin{aligned}
\mathbb{A}&:=\int_\Omega |\nabla\eta|^2 u_{\mu,\varepsilon}^2 dx\\
&\leq C\varepsilon^{(2-N)} \int_{B_{2\delta} \setminus B_{\delta}}\frac{1}{\big( |\frac{x}{\varepsilon}|^{1-a}+|\frac{x}{\varepsilon}|^{1+a} \big)^{N-2}}dx\\
&=C\varepsilon^{(N-2)a}  \int_{B_{2\delta} \setminus B_{\delta}}\frac{1}{\big( |x|^{1-a}(\varepsilon^{2a}+ |x|^{2a} )\big)^{N-2}}dx\\
&\leq C\varepsilon^{(N-2)a}  \int_{B_{2\delta} \setminus B_{\delta}} \frac{1}{ |x|^{(1+a)(N-2)}}dx\\
&=O(\varepsilon^{(N-2)a}).
\end{aligned}
\end{equation*}
On the one hand,
\begin{equation*}
\begin{aligned}
&\int_\Omega\int_\Omega\frac{|\bar{u}_\varepsilon(x)|^{2_\alpha^*} |\bar{u}_\varepsilon(y)|^{2_\alpha^*}}{|x-y|^\alpha}dxdy\\
& \geqslant\int_{B_\delta}\int_{B_\delta}\frac{|\bar{u}_\varepsilon(x) |^{2_\alpha^*}|\bar{u}_\varepsilon(y)|^{2_\alpha^*}}{|x-y|^\alpha}dxdy \\
&=\int_{B_\delta}\int_{B_\delta} \frac{|u_{\mu,\varepsilon}(x)|^{2_\alpha^*}|u_{\mu,\varepsilon}(y) |^{2_\alpha^*}}{|x-y|^\alpha}dxdy\\
&=\int_{\mathbb{R}^N}\int_{\mathbb{R}^N}\frac{|u_{\mu,\varepsilon}(x)|^{2_\alpha^*} |u_{\mu,\varepsilon}(y)|^{2_\alpha^*}}{|x-y|^\alpha}dxdy -2\int_{\mathbb{R}^N\setminus B_\delta}\int_{B_\delta}\frac{|u_{\mu,\varepsilon}(x)|^{2_\alpha^*} |u_{\mu,\varepsilon}(y)|^{2_\alpha^*}}{|x-y|^\alpha}dxdy \\
& -\int_{\mathbb{R}^N\setminus B_\delta}\int_{\mathbb{R}^N\setminus B_\delta}\frac{|u_{\mu,\varepsilon}(x)|^{2_\alpha^*} |u_{\mu,\varepsilon}(y)|^{2_\alpha^*}}{|x-y|^\alpha}dxdy \\
&:=S_{H,\mu}^{ \frac{2N-\alpha}{N-\alpha+2}}-2\mathbb{B}-\mathbb{C},
\end{aligned}
\end{equation*}
where
$$\mathbb{B}  :=\int_{\mathbb{R}^N\setminus B_\delta}\int_{B_\delta}\frac{|u_{\mu,\varepsilon}(x) |^{2_\alpha^*}|u_{\mu,\varepsilon}(y)|^{2_\alpha^*}}{|x-y|^\alpha}dxdy$$
and
$$\mathbb{C}:=\int_{\mathbb{R}^N\setminus B_\delta}\int_{\mathbb{R}^N\setminus B_\delta}\frac{|u_{\mu,\varepsilon}(x)|^{2_\alpha^*} |u_{\mu,\varepsilon}(y)|^{2_\alpha^*}}{|x-y|^\alpha}dxdy.$$
After a simple calculation, we have
$$\begin{aligned}
\mathbb{B}&\leq\int_{\mathbb{R}^N\setminus B_\delta}\int_{B_\delta}\frac{C\varepsilon^{(\alpha-2N)}}{\big(|\frac{x}{\varepsilon}|^{1-a} +|\frac{x}{\varepsilon}|^{1+a} \big)^{\frac{2N-\alpha}{2}}|x-y|^\alpha \big(|\frac{y}{\varepsilon}|^{1-a} +|\frac{y}{\varepsilon}|^{1+a} \big)^{\frac{2N-\alpha}{2}}}dxdy\\
&\leq C\varepsilon^{(\alpha-2N)} \bigg(\int_{\mathbb{R}^N\setminus B_\delta}\frac{1}{\big(|\frac{x}{\varepsilon}|^{1-a} +|\frac{x}{\varepsilon}|^{1+a}  \big)^N}dx\bigg)^{\frac{2N-\alpha}{2N}} \bigg(\int_{B_\delta}\frac{1}{\big(|\frac{y}{\varepsilon}|^{1-a} +|\frac{y}{\varepsilon}|^{1+a}  \big)^N}dy\bigg)^{\frac{2N-\alpha}{2N}}\\
&= C\varepsilon^{(\alpha-2N)} \bigg(\int_{\mathbb{R}^N\setminus B_\delta}\frac{\varepsilon^{(1+a)N}}{\big(|x|^{1-a}(\varepsilon^{2a} +|x|^{2a})   \big)^N}dx\bigg)^{\frac{2N-\alpha}{2N}} \bigg(\int_{B_\delta}\frac{1}{\big(|\frac{y}{\varepsilon}|^{1-a} +|\frac{y}{\varepsilon}|^{1+a}  \big)^N}dy\bigg)^{\frac{2N-\alpha}{2N}}\\
&\leq C\varepsilon^{\frac{(2N-\alpha)a}{2}} \bigg(\int_{|x|>\delta}\frac{1}{|x|^{(1+a)N}   }dx\bigg)^{\frac{2N-\alpha}{2N}} \bigg(\int_{|x|<\frac{\delta}{\varepsilon}}\frac{1}{\big(|x|^{1-a} +|x|^{1+a}  \big)^N}dx\bigg)^{\frac{2N-\alpha}{2N}}\\
&\leq C\varepsilon^{\frac{(2N-\alpha)a}{2}}\bigg(\int_{0}^{\frac{\delta}{ \varepsilon}}\frac{r^{N-1}}{\big(|r|^{1-a} +|r|^{1+a}  \big)^N}dr\bigg)^{\frac{2N-\alpha}{2N}}\\
&=O(\varepsilon^{\frac{(2N-\alpha)a}{2}})
\end{aligned}$$
and
$$\begin{aligned}
\mathbb{C}&\leq\int_{\mathbb{R}^N\setminus B_\delta}\int_{\mathbb{R}^N\setminus B_\delta}\frac{C\varepsilon^{(\alpha-2N)}}{\big(|\frac{x}{\varepsilon}|^{1-a} +|\frac{x}{\varepsilon}|^{1+a} \big)^{\frac{2N-\alpha}{2}}|x-y|^\alpha \big(|\frac{y}{\varepsilon}|^{1-a} +|\frac{y}{\varepsilon}|^{1+a} \big)^{\frac{2N-\alpha}{2}}}dxdy\\
&\leq C\varepsilon^{(\alpha-2N)} \bigg(\int_{\mathbb{R}^N\setminus B_\delta}\frac{1}{\big(|\frac{x}{\varepsilon}|^{1-a} +|\frac{x}{\varepsilon}|^{1+a}  \big)^N}dx\bigg)^{\frac{2N-\alpha}{N}} \\
&= C\varepsilon^{(\alpha-2N)} \bigg(\int_{\mathbb{R}^N\setminus B_\delta}\frac{\varepsilon^{(1+a)N}}{\big(|x|^{1-a}(\varepsilon^{2a} +|x|^{2a})   \big)^N}dx\bigg)^{\frac{2N-\alpha}{N}} \\
&\leq C\varepsilon^{(2N-\alpha)a} \bigg(\int_{|x|>\delta}\frac{1}{|x|^{(1+a)N}   }dx\bigg)^{\frac{2N-\alpha}{N}} \\
&=O(\varepsilon^{(2N-\alpha)a}).
\end{aligned}$$
Thus, \begin{equation}\label{eq3.7}
\begin{aligned}
\int_\Omega\int_\Omega\frac{|\bar{u}_\varepsilon(x)|^{2_\alpha^*} |\bar{u}_\varepsilon(y)|^{2_\alpha^*}}{|x-y|^\alpha}dxdy &\geq S_{H,\alpha}^{ \frac{2N-\alpha}{N-\alpha+2}}-O(\varepsilon^{(2N-\alpha)a}).
\end{aligned}
\end{equation}
On the other hand,
\begin{equation*}
\begin{aligned}
&\Big|\int_\Omega\int_\Omega\frac{|\bar{u}_\varepsilon(x)|^{2_\alpha^*} |\bar{u}_\varepsilon(y)|^{2_\alpha^*}}{|x-y|^\alpha}dxdy- \int_{\mathbb{R}^N}\int_{\mathbb{R}^N} \frac{|u_{\mu,\varepsilon}(x)|^{2_{\alpha}^*}|u_{\mu,\varepsilon}(y)|^{2_{\alpha}^*} }{|x-y|^{\alpha}}dxdy \Big|\\
&\leq \Big|\int_{\mathbb{R}^N}\int_{\mathbb{R}^N}\frac{\big(|\bar{u}_\varepsilon(x)|^{2_\alpha^*} -|u_{\mu,\varepsilon}(x)|^{2_{\mu}^*} \big) |\bar{u}_\varepsilon(y)|^{2_\alpha^*}}{|x-y|^\alpha}dxdy\Big| +\Big| \int_{\mathbb{R}^N}\int_{\mathbb{R}^N} \frac{|u_{\mu,\varepsilon}(x)|^{2_{\alpha}^*}\big(|\bar{u}_\varepsilon(y)|^{2_\alpha^*} -|u_{\mu,\varepsilon}(y)|^{2_{\alpha}^*} \big)}{|x-y|^{\alpha}}dxdy   \Big|\\
&\leq \int_{\mathbb{R}^N}\int_{\mathbb{R}^N}\frac{\big||\eta(x)|^{2_\alpha^*} - 1\big| |u_{\mu,\varepsilon}(x)|^{2_{\alpha}^*} |\eta(y)|^{2_\alpha^*}  |u_{\mu,\varepsilon}(y)|^{2_{\alpha}^*}}{|x-y|^\alpha}dxdy + \int_{\mathbb{R}^N}\int_{\mathbb{R}^N} \frac{|u_{\mu,\varepsilon}(x)|^{2_{\alpha}^*}\big||\eta(y)|^{2_\alpha^*} - 1\big| |u_{\mu,\varepsilon}(y)|^{2_{\alpha}^*}}{|x-y|^{\alpha}}dxdy   \\
&\leq \int_{\mathbb{R}^N\setminus B_\delta}\int_{\mathbb{R}^N}\frac{ |u_{\mu,\varepsilon}(x)|^{2_{\alpha}^*}  |u_{\mu,\varepsilon}(y)|^{2_{\alpha}^*}}{|x-y|^\alpha}dxdy + \int_{\mathbb{R}^N}\int_{\mathbb{R}^N\setminus B_\delta} \frac{|u_{\mu,\varepsilon}(x)|^{2_{\alpha}^*} |u_{\mu,\varepsilon}(y)|^{2_{\alpha}^*}}{|x-y|^{\alpha}}dxdy  \\
&= 2\int_{\mathbb{R}^N\setminus B_\delta}\int_{\mathbb{R}^N}\frac{ |u_{\mu,\varepsilon}(x)|^{2_{\alpha}^*}  |u_{\mu,\varepsilon}(y)|^{2_{\alpha}^*}}{|x-y|^\alpha}dxdy\\
&\leq C\varepsilon^{(\alpha-2N)} \bigg(\int_{\mathbb{R}^N\setminus B_\delta}\frac{1}{\big(|\frac{x}{\varepsilon}|^{1-a} +|\frac{x}{\varepsilon}|^{1+a}  \big)^N}dx\bigg)^{\frac{2N-\alpha}{2N}} \bigg(\int_{\mathbb{R}^N}\frac{1}{\big(|\frac{y}{\varepsilon}|^{1-a} +|\frac{y}{\varepsilon}|^{1+a}  \big)^N}dy\bigg)^{\frac{2N-\alpha}{2N}}\\
&\leq C\varepsilon^{\frac{(2N-\alpha)a}{2}} \bigg(\int_{\mathbb{R}^N\setminus B_\delta}\frac{1}{\big(|x|^{1-a}(\varepsilon^{2a} +|x|^{2a})   \big)^N}dx\bigg)^{\frac{2N-\alpha}{2N}} \bigg(\int_{\mathbb{R}^N}\frac{1}{\big(|y|^{1-a} +|y|^{1+a}  \big)^N}dy\bigg)^{\frac{2N-\alpha}{2N}}\\
&=O(\varepsilon^{\frac{(2N-\alpha)a}{2}}),
\end{aligned}
\end{equation*}
which means that
\begin{equation}\label{eq3.8}
\begin{aligned}
\int_\Omega\int_\Omega\frac{|\bar{u}_\varepsilon(x)|^{2_\alpha^*} |\bar{u}_\varepsilon(y)|^{2_\alpha^*}}{|x-y|^\alpha}dxdy &\leq \int_{\mathbb{R}^N}\int_{\mathbb{R}^N} \frac{|u_{\mu,\varepsilon}(x)|^{2_{\alpha}^*}|u_{\mu,\varepsilon}(y)|^{2_{\alpha}^*} }{|x-y|^{\alpha}}dxdy+O(\varepsilon^{\frac{(2N-\alpha)a}{2}})\\
&=S_{H,\alpha}^{ \frac{2N-\alpha}{N-\alpha+2}}+O(\varepsilon^{\frac{(2N-\alpha)a}{2}}).
\end{aligned}
\end{equation}
By \eqref{eq3.7} and  \eqref{eq3.8}, we get
\begin{equation}\label{eq3.9}
\begin{aligned}
S_{H,\alpha}^{ \frac{2N-\alpha}{N-\alpha+2}}-O(\varepsilon^{(2N-\alpha)a}) \leq \int_\Omega\int_\Omega\frac{|\bar{u}_\varepsilon(x)|^{2_\alpha^*} |\bar{u}_\varepsilon(y)|^{2_\alpha^*}}{|x-y|^\alpha}dxdy
&\leq S_{H,\alpha}^{ \frac{2N-\alpha}{N-\alpha+2}}+O(\varepsilon^{\frac{(2N-\alpha)a}{2}}).
\end{aligned}
\end{equation}
Moreover, by a straightforward computation it follows
\begin{equation}\label{eq3.10}
\begin{aligned}
\int_\Omega|\bar{u}_\varepsilon|^{q+1}dx&=\int_{|x|\leq 2\delta}|\eta ( x) u_{\mu,\varepsilon} ( x)|^{q+1}dx\\
& \geq \int_{|x|\leq \delta}|u_{\mu,\varepsilon} ( x)|^{q+1}dx\\
& \geq C\varepsilon^{\frac{2-N}{2}(q+1)}  \int_{|x|\leq \delta} \frac{1}{\Big( \big|\frac{x}{\varepsilon} \big|^{1-a} +\big|\frac{x}{\varepsilon} \big|^{1+a} \Big)^{\frac{N-2}{2}(q+1) }}dx\\
& = C\varepsilon^{N-\frac{N-2}{2}(q+1)}  \int_{|x|\leq \frac{\delta}{\varepsilon}} \frac{1}{\Big( |x |^{1-a} + |x  |^{1+a} \Big)^{\frac{N-2}{2}(q+1) }}dx\\
& \geq C\varepsilon^{N-\frac{N-2}{2}(q+1)}  \int_{1\leq|x|\leq \frac{\delta}{\varepsilon}} \frac{1}{\Big( |x |^{1-a} + |x  |^{1+a} \Big)^{\frac{N-2}{2}(q+1) }}dx\\
& \geq C\varepsilon^{N-\frac{N-2}{2}(q+1)}  \int_{1\leq|x|\leq \frac{\delta}{\varepsilon}} \frac{1}{ \big|x  \big|^{(1+a)\frac{N-2}{2}(q+1)} }dx\\
& = C\varepsilon^{N-\frac{N-2}{2}(q+1)}  \int_{1}^{\frac{\delta}{\varepsilon}} r^{N-(1+a)\frac{N-2}{2}(q+1)-1}dr\\
&=\begin{cases}
&C\varepsilon^{N-\frac{N-2}{2}(q+1)},~~~~~~~~\text{if }N< (1+a)\frac{N-2}{2}(q+1),\\&C\varepsilon^{N-\frac{N-2}{2}(q+1)}|ln\varepsilon |,~~\text{if }N= (1+a)\frac{N-2}{2}(q+1), \\&C\varepsilon^{\frac{N-2}{2}(q+1)a},~~~~~~~~~~\text{if } N> (1+a)\frac{N-2}{2}(q+1).
\end{cases}\\
\end{aligned}
\end{equation}
In particular, if $q=1$, we have
\begin{equation}\label{eq3.11}
\begin{aligned}
\int_\Omega|\bar{u}_\varepsilon|^{2}dx& \geq\begin{cases}
&C\varepsilon^{2},~~~~~~~~\text{if }N< (N-2)(1+a),\\
&C\varepsilon^{2}|ln\varepsilon |,~~\text{if }N= (N-2)(1+a), \\
&C\varepsilon^{(N-2)a},~~\text{if } N> (N-2)(1+a).
\end{cases}\\
\end{aligned}
\end{equation}
If $\mu<\bar{\mu}-1$, then $N< (N-2)(1+a)$ and $(N-2)a> 2$, thus by \eqref{eq3.6}, \eqref{eq3.9} and  \eqref{eq3.11}, we obtain
$$\begin{aligned}
0<\max_{t\geq0}\mathcal{I}(t\bar{u}_\varepsilon)&\leq \Big(\frac{1}{2}-\frac{1}{2\cdot{2_{\alpha}^*}} \Big) \Bigg( \frac{\int_\Omega|\nabla \bar{u}_\varepsilon|^2dx-\mu  \int_{\Omega} \frac{|\bar{u}_\varepsilon|^2}{|x|^2} dx- \lambda\int_\Omega |\bar{u}_\varepsilon|^2dx }{\Big(\int_\Omega\int_\Omega\frac{|\bar{u}_\varepsilon(x)|^{2_\alpha^*} |\bar{u}_\varepsilon|^{2_\alpha^*}}{|x-y|^\alpha}dxdy \Big)^{\frac{1}{2_\alpha^*}}}\Bigg)^{\frac{2_\alpha^*}{2_\alpha^*-1}}\\
& \leq \frac{N-\alpha+2}{2(2N-\alpha)} \Bigg(\frac{S_{H,\mu}^{\frac{2N-\alpha}{N-\alpha+2}}+  O(\varepsilon^{(N-2)a})- \lambda O(\varepsilon^2) }
{\Big(S_{H,\mu}^{\frac{2N-\alpha}{N-\alpha+2}}- O(\varepsilon^\frac{(N-2)a}{2})\Big)^\frac{N-2}{2N-\alpha}}  \Bigg)^{\frac{2_\alpha^*}{2_\alpha^*-1}}\\
& \leq \frac{N-\alpha+2}{2(2N-\alpha)} \Bigg(S_{H,\mu}+ O(\varepsilon^{(N-2)a})- \lambda O(\varepsilon^2) \Bigg)^{\frac{2N-\alpha}{N+2-\alpha }}\\
&<\frac{N-\alpha+2}{2(2N-\alpha)}S_{H,\mu}^{\frac{2N-\alpha}{N+2-\alpha}}.
\end{aligned}$$
If $\mu=\bar{\mu}-1$, then $N= (N-2)(1+a)$ and $(N-2)a= 2$,  by \eqref{eq3.6}, \eqref{eq3.9} and  \eqref{eq3.11}, we get
$$\begin{aligned}
0<\max_{t\geq0}\mathcal{I}(t\bar{u}_\varepsilon)&\leq \Big(\frac{1}{2}-\frac{1}{2\cdot{2_{\alpha}^*}} \Big) \Bigg( \frac{\int_\Omega|\nabla \bar{u}_\varepsilon|^2dx-\mu  \int_{\Omega} \frac{|\bar{u}_\varepsilon|^2}{|x|^2} dx- \lambda\int_\Omega |\bar{u}_\varepsilon|^2dx }{\Big(\int_\Omega\int_\Omega\frac{|\bar{u}_\varepsilon(x)|^{2_\alpha^*} |\bar{u}_\varepsilon|^{2_\alpha^*}}{|x-y|^\alpha}dxdy \Big)^{\frac{1}{2_\alpha^*}}}\Bigg)^{\frac{2_\alpha^*}{2_\alpha^*-1}}\\
& \leq \frac{N-\alpha+2}{2(2N-\alpha)} \Bigg(\frac{S_{H,\mu}^{\frac{2N-\alpha}{N-\alpha+2}}+  O(\varepsilon^{2})- \lambda C\varepsilon^2|ln \varepsilon|  }
{\Big(S_{H,\mu}^{\frac{2N-\alpha}{N-\alpha+2}}- O(\varepsilon)\Big)^\frac{N-2}{2N-\alpha}}  \Bigg)^{\frac{2_\alpha^*}{2_\alpha^*-1}}\\
& \leq \frac{N-\alpha+2}{2(2N-\alpha)} \Bigg(S_{H,\mu}+ O(\varepsilon^{2})- \frac{\lambda \varepsilon^2|ln \varepsilon|}{\Big(S_{H,\mu}^{\frac{2N-\alpha}{N-\alpha+2}}- O(\varepsilon)\Big)^\frac{N-2}{2N-\alpha}} \Bigg)^{\frac{2N-\alpha}{N+2-\alpha}}\\
&<\frac{N-\alpha+2}{2(2N-\alpha)}S_{H,\mu}^{\frac{2N-\alpha}{N+2-\alpha}}.
\end{aligned}$$
Therefore, $$\max_{t\geq0}\mathcal{I}(t\bar{u}_\varepsilon)< \frac{N-\alpha+2}{2(2N-\alpha)}S_{H, \mu}^{\frac{2N-\alpha}{N+2-\alpha}}.$$
\end{proof}

\textbf{Proof of Theorem \ref{Thm1.2}:}
From Lemma \ref{Lem3.1} and the mountain pass theorem (see \cite{Willem1996}),
there exists a $(PS)_c$ sequence $\{u_n\}$ such that
\begin{equation*}
\mathcal{I}(u_n)\to c~~\text{ and}~~~\mathcal{I}'(u_n)\to 0~\text{in }~ (H_0^1(\Omega))^{-1}
\end{equation*}
at the minimax level
$$c=\inf_{\gamma\in \Gamma}\sup_{t\in[0,1]}\mathcal{I}(\gamma(t)),$$
where  $\Gamma$ is defined by
$$\Gamma:=\Big\{\gamma\in C([0,1], H_0^1(\Omega)): \gamma(0)=0~~\text{and}~~\mathcal{I}(\gamma(1))<0  \Big\}.$$
By the definition of $c$ and Lemma \ref{Lem3.4}, we obtain $c<\frac{N+2-\alpha}{2(2N-\alpha)}S_{H,\mu}^{\frac{2N-\alpha}{N+2-\alpha}} $. Together with Lemma \ref{Lem3.2}, and Lemma \ref{Lem3.3}, we have that the sequence $\{u_n\}$ contains a strongly convergent subsequence and equation \eqref{eq3.1} has a nontrivial solution, observing that $\mathcal{I}(u_0)=c>0$.
\raisebox{-0.5mm}{\rule{2.5mm}{3mm}}\vspace{6pt}

\section{Superlinear perturbation}
In this section, we consider the existence  of solutions for a different perturbation term, that is
\begin{equation}\label{eq4.1}
\left\{\begin{array}{ll}
-\Delta u(x) -\mu\frac{u}{|x|^2}=
\Big( \displaystyle{\int_{\Omega}} \frac{|u(y)|^{2_{\alpha}^*} }{|x-y|^{\alpha}}dy\Big) |u|^{{2_{\alpha}^*}-1}+\lambda u^{q},     \,  &x\in  \Omega, \\
u(x) =0,    \, &  x\in \partial \Omega,
\end{array} \right.
\end{equation}
where $1<q<2^*-1$.
The energy functional $\mathcal{I}$ associated to \eqref{eq4.1} is defined as follows:
\begin{equation*}
\mathcal{I}(u) := \frac{1}{2} \int_{\Omega} |\nabla u|^2dx -\frac{\mu }{2} \int_{\Omega} \frac{|u|^2}{|x|^2} dx-  \frac{\lambda}{q+1} \int_{\Omega} | u|^{q+1}dx -\frac{1}{2\cdot{2_{\alpha}^*}}\int_{\Omega}\int_{\Omega} \frac{|u(x)|^{2_{\alpha}^*}|u(y)|^{2_{\alpha}^*} }{|x-y|^{\alpha}}dxdy.
\end{equation*}

Following the argument for the linear case, with minor modifications we can check that  $\mathcal{I}$ satisfies the Mountain Pass geometry and the $(PS)$ condition at any level $c$, provided $ c< c^*:= \frac{N-\alpha+2}{2(2N-\alpha)}S_{H,\mu}^{\frac{2N-\alpha}{N+2-\alpha}}$. We omit the details to avoid repetitions. So it suffices to find a path
with energy below critical level $c^*$ and we have the following.

\begin{Lem}\label{Lem4.1}
Let $0<\mu < \bar{\mu}$, then  there exists $\varepsilon>0$ small enough such that
$$\max_{t\geq0}\mathcal{I}(t\bar{u}_\varepsilon)\leq c< \frac{N+2-\alpha}{2(2N-\alpha)}S_{H,\mu}^{\frac{2N-\alpha}{N+2-\alpha}},$$
provided that
\begin{itemize}
  \item $N>\frac{2(2a+q+1)}{2a+q-1}$ and  $\lambda>0$ or
  \item $N<\frac{2(2a+q+1)}{2a+q-1}$ and  $\lambda>\lambda_0$, for $\lambda_0$ large enough.
\end{itemize}
\end{Lem}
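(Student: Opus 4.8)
The approach mirrors Lemma \ref{Lem3.4}, the one essential new feature being that here the perturbation $\frac{\lambda}{q+1}\int_\Omega|u|^{q+1}\,dx$ is no longer quadratic in the dilation parameter and therefore cannot be absorbed into the leading term as in the linear case; one has to exploit its negativity more carefully. Write $\varphi_\varepsilon(t):=\mathcal I(t\bar{u}_\varepsilon)=\tfrac{A_\varepsilon}{2}t^2-\tfrac{\lambda D_\varepsilon}{q+1}t^{q+1}-\tfrac{B_\varepsilon}{2\cdot2_\alpha^*}t^{2\cdot2_\alpha^*}$, where $A_\varepsilon:=\|\bar{u}_\varepsilon\|_\mu^2$, $D_\varepsilon:=\int_\Omega|\bar{u}_\varepsilon|^{q+1}\,dx$ and $B_\varepsilon:=\int_\Omega\int_\Omega\frac{|\bar{u}_\varepsilon(x)|^{2_\alpha^*}|\bar{u}_\varepsilon(y)|^{2_\alpha^*}}{|x-y|^\alpha}\,dxdy$. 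The plan is to reuse the estimates already produced in the proof of Lemma \ref{Lem3.4}: by \eqref{eq3.6} and \eqref{eq3.9}, $A_\varepsilon\le S_{H,\mu}^{\frac{2N-\alpha}{N+2-\alpha}}+O(\varepsilon^{(N-2)a})$ and $B_\varepsilon\ge S_{H,\mu}^{\frac{2N-\alpha}{N+2-\alpha}}-O(\varepsilon^{(2N-\alpha)a})$, while testing the quotient defining $S_{H,\mu}$ with $\bar{u}_\varepsilon$ (extended by zero) gives also $A_\varepsilon,B_\varepsilon\to S_{H,\mu}^{\frac{2N-\alpha}{N+2-\alpha}}$ as $\varepsilon\to0^+$; and \eqref{eq3.10} supplies the three-regime lower bound for $D_\varepsilon$. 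Since $\varphi_\varepsilon(0)=0$, $\varphi_\varepsilon>0$ near $0$, $\varphi_\varepsilon(t)\to-\infty$, and $\varphi_\varepsilon$ is strictly decreasing beyond the maximiser $t_\varepsilon:=(A_\varepsilon/B_\varepsilon)^{\frac{N-2}{2(N+2-\alpha)}}$ of $h_\varepsilon(t):=\tfrac{A_\varepsilon}{2}t^2-\tfrac{B_\varepsilon}{2\cdot2_\alpha^*}t^{2\cdot2_\alpha^*}$, the maximum of $\varphi_\varepsilon$ on $[0,\infty)$ is attained at some $t_\varepsilon^*\in(0,t_\varepsilon]$, and $t_\varepsilon\to1$.

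In the regime $N>\tfrac{2(2a+q+1)}{2a+q-1}$ (any $\lambda>0$), I would first record the arithmetic: writing $M=N-2$, the inequality $N>\tfrac{2(2a+q+1)}{2a+q-1}$ is \emph{equivalent} to $N-\tfrac{N-2}{2}(q+1)<(N-2)a$, and it also implies $N<(1+a)\tfrac{N-2}{2}(q+1)$ (because $x\mapsto\tfrac{2x}{x-2}$ is decreasing on $(2,\infty)$ and $(1+a)(q+1)-(2a+q+1)=a(q-1)>0$), so that we are in the first line of \eqref{eq3.10} and $D_\varepsilon\ge C\varepsilon^{\,N-\frac{N-2}{2}(q+1)}$. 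Dropping the nonnegative perturbation term and using the elementary identity applied at the end of the proof of Lemma \ref{Lem3.4} gives $h_\varepsilon(t_\varepsilon)=\tfrac{N+2-\alpha}{2(2N-\alpha)}A_\varepsilon^{\frac{2N-\alpha}{N+2-\alpha}}B_\varepsilon^{-\frac{N-2}{N+2-\alpha}}\le c^*+O(\varepsilon^{(N-2)a})$, where $c^*=\tfrac{N+2-\alpha}{2(2N-\alpha)}S_{H,\mu}^{\frac{2N-\alpha}{N+2-\alpha}}$. On the other hand the critical-point identity $A_\varepsilon-B_\varepsilon(t_\varepsilon^*)^{2(2_\alpha^*-1)}=\lambda D_\varepsilon(t_\varepsilon^*)^{q-1}$, together with $A_\varepsilon,B_\varepsilon\to S_{H,\mu}^{\frac{2N-\alpha}{N+2-\alpha}}$ and $D_\varepsilon\to0$, forces $t_\varepsilon^*\to1$, hence $t_\varepsilon^*\ge\tfrac12$ for $\varepsilon$ small. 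Then
$$\max_{t\ge0}\mathcal I(t\bar{u}_\varepsilon)=\varphi_\varepsilon(t_\varepsilon^*)\le h_\varepsilon(t_\varepsilon)-\frac{\lambda D_\varepsilon}{q+1}(t_\varepsilon^*)^{q+1}\le c^*+O(\varepsilon^{(N-2)a})-C\lambda\,\varepsilon^{\,N-\frac{N-2}{2}(q+1)},$$
and since the subtracted term decays strictly slower than the error term, the right-hand side is $<c^*$ for $\varepsilon$ small, whatever $\lambda>0$.

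In the regime $N<\tfrac{2(2a+q+1)}{2a+q-1}$ I would instead fix a small $\varepsilon>0$ once and for all and simply discard the nonnegative Choquard term, so that $\varphi_\varepsilon(t)\le\tfrac{A_\varepsilon}{2}t^2-\tfrac{\lambda D_\varepsilon}{q+1}t^{q+1}$, whose maximum over $t\ge0$ equals $\tfrac{q-1}{2(q+1)}A_\varepsilon^{\frac{q+1}{q-1}}(\lambda D_\varepsilon)^{-\frac{2}{q-1}}$. As $q>1$ the exponent $-\tfrac{2}{q-1}$ of $\lambda$ is negative, so this quantity tends to $0$ as $\lambda\to\infty$ and, in particular, lies below $c^*$ as soon as $\lambda>\lambda_0$ for a suitable $\lambda_0=\lambda_0(\varepsilon,N,\alpha,\mu,q)$; hence $\max_{t\ge0}\mathcal I(t\bar{u}_\varepsilon)<c^*$. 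In either case the conclusion follows, since the path $t\mapsto t\bar{u}_\varepsilon$ (after the usual reparametrisation) is admissible for the mountain-pass scheme, so the corresponding mountain-pass level is $\le\max_{t\ge0}\mathcal I(t\bar{u}_\varepsilon)<c^*$.

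I expect the one genuinely delicate point to be the bookkeeping in the first regime — namely verifying that the dimensional threshold $N>\tfrac{2(2a+q+1)}{2a+q-1}$ is exactly what makes the gain $\varepsilon^{N-\frac{N-2}{2}(q+1)}$ produced by the superlinear perturbation beat the loss $\varepsilon^{(N-2)a}$ caused by truncating the extremal $u_{\mu,\varepsilon}$, and that it simultaneously selects the correct line of \eqref{eq3.10}. The remaining ingredients are routine: the one-variable calculus for $h_\varepsilon$, the convergence $t_\varepsilon^*\to1$, the elementary $\lambda\to\infty$ asymptotics, and the fact that $\mathcal I$ enjoys the mountain-pass geometry and the $(PS)_c$ condition for $c<c^*$, which is word for word the content of Lemmas \ref{Lem3.1}--\ref{Lem3.3}.
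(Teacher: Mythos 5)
Your proof is correct and follows essentially the same route as the paper's: bound $\mathcal I(t\bar u_\varepsilon)$ by a model function, split off the critical-Choquard one-variable maximum $h_\varepsilon(t_\varepsilon)\le c^*+O(\varepsilon^{(N-2)a})$, show the maximizer stays bounded away from $0$, and exhibit the competing gain $\lambda D_\varepsilon$ from the superlinear term. The one place you improve on the paper is the arithmetic you flag as delicate: you check that $N>\tfrac{2(2a+q+1)}{2a+q-1}$ is equivalent to $N-\tfrac{N-2}{2}(q+1)<(N-2)a$ \emph{and} also forces $N<(1+a)\tfrac{N-2}{2}(q+1)$ (via $(1+a)(q+1)-(2a+q+1)=a(q-1)>0$ and the monotonicity of $x\mapsto\tfrac{2x}{x-2}$), which is what selects the first line of \eqref{eq3.10}; the paper invokes that line without explaining why it applies. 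Your Case II is also marginally cleaner — discarding the nonlocal term and computing the degree-$2$ versus degree-$(q+1)$ maximum $\tfrac{q-1}{2(q+1)}A_\varepsilon^{\frac{q+1}{q-1}}(\lambda D_\varepsilon)^{-\frac{2}{q-1}}$ explicitly — whereas the paper keeps all three terms and argues $t_{\varepsilon,\lambda}\to0$ from the Euler equation for the maximizer, but these are cosmetic variants of the same idea.
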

\begin{proof}
Case I: $N>\frac{2(2a+q+1)}{2a+q-1}$.  In this case, it follows from  $1<q<2^*-1$ that we have $(N-2)a>N-\frac{N-2}{2}(q+1)$.
For any $t>0$,  we have
\begin{equation*}
\begin{aligned}
\mathcal{I}(t\bar{u}_\varepsilon)&= \frac{t^2}{2} \int_{\Omega} |\nabla \bar{u}_\varepsilon|^2dx -\frac{\mu t^2}{2} \int_{\Omega} \frac{|\bar{u}_\varepsilon|^2}{|x|^2} dx-  \frac{\lambda t^{q+1}}{q+1} \int_{\Omega}  |\bar{u}_\varepsilon|^{q+1}dx -\frac{t^{2\cdot{2_{\alpha}^*}}}{2\cdot{2_{\alpha}^*}}\int_{\Omega}\int_{\Omega} \frac{|\bar{u}_\varepsilon(x)|^{2_{\alpha}^*}|\bar{u}_\varepsilon(y)|^{2_{\alpha}^*} }{|x-y|^{\alpha}}dxdy\\
&\leq\frac{t^2}{2}\Big(S_{H,\mu}^{\frac{2N-\alpha}{N-\alpha+2}}+O(\varepsilon^{(N-2)a})\Big) -\lambda t^{q+1} O(\varepsilon^{N-\frac{N-2}{2}(q+1)}) -\frac{t^{2\cdot{2_{\alpha}^*}}}{2\cdot{2_{\alpha}^*}}\Big( S_{H,\mu}^{\frac{2N-\alpha}{N-\alpha+2}}- O(\varepsilon^\frac{(N-2)a}{2} \Big)\\
&=:h(t).
\end{aligned}
\end{equation*}
Since $1 < q <2^*$, then it is easy to see that $h(0)=0$ and $\lim\limits_{t\to+\infty }h(t)=-\infty$. Thus there exists $t_\varepsilon>0$ such that $h(t_\varepsilon)=\max\limits_{t>0}h(t)$ and
\begin{equation*}
\begin{aligned}
t_\varepsilon\Big(S_{H,\mu}^{\frac{2N-\alpha}{N-\alpha+2}}+O(\varepsilon^{(N-2)a})\Big) -\lambda t_\varepsilon^{q} O(\varepsilon^{N-\frac{N-2}{2}(q+1)}) -t_\varepsilon^{2\cdot{2_{\alpha}^*}-1}\Big( S_{H,\mu}^{\frac{2N-\alpha}{N-\alpha+2}}- O(\varepsilon^\frac{(N-2)a}{2} \Big)=0.
\end{aligned}
\end{equation*}
For $\varepsilon>$ sufficiently small,
\begin{equation*}
\begin{aligned}
t_\varepsilon&=\Bigg(\frac{S_{H,\mu}^{\frac{2N-\alpha}{N-\alpha+2}}+O(\varepsilon^{(N-2)a}) -\lambda t_\varepsilon^{q-1} O(\varepsilon^{N-\frac{N-2}{2}(q+1)})}{S_{H,\mu}^{\frac{2N-\alpha}{N-\alpha+2}}- O(\varepsilon^\frac{(N-2)a}{2})} \Bigg)^{\frac{1}{2\cdot{2_{\alpha}^*}-2}}\\
&\leq\Bigg(\frac{S_{H,\mu}^{\frac{2N-\alpha}{N-\alpha+2}}+O(\varepsilon^{(N-2)a}) }{S_{H,\mu}^{\frac{2N-\alpha}{N-\alpha+2}}- O(\varepsilon^\frac{(N-2)a}{2})} \Bigg)^{\frac{1}{2\cdot{2_{\alpha}^*}-2}}\\
&\leq \Big(1+O(\varepsilon^{(N-2)a})   \Big)^{\frac{1}{2\cdot{2_{\alpha}^*}-2}}.
\end{aligned}
\end{equation*}
On the other hand, there exists $t_0 > 0$ independent of $\varepsilon$ such that for $\varepsilon>$  small enough,
\begin{equation*}
\begin{aligned}
t_\varepsilon&=\Bigg(\frac{S_{H,\mu}^{\frac{2N-\alpha}{N-\alpha+2}}+O(\varepsilon^{(N-2)a}) -\lambda t_\varepsilon^{q-1} O(\varepsilon^{N-\frac{N-2}{2}(q+1)})}{S_{H,\mu}^{\frac{2N-\alpha}{N-\alpha+2}}- O(\varepsilon^\frac{(N-2)a}{2})} \Bigg)^{\frac{1}{2\cdot{2_{\alpha}^*}-2}}\\
&\geq \Bigg(\frac{S_{H,\mu}^{\frac{2N-\alpha}{N-\alpha+2}}+O(\varepsilon^{(N-2)a}) -O(\varepsilon^{N-\frac{N-2}{2}(q+1)}) }{S_{H,\mu}^{\frac{2N-\alpha}{N-\alpha+2}}- O(\varepsilon^\frac{(N-2)a}{2})} \Bigg)^{\frac{1}{2\cdot{2_{\alpha}^*}-2}}\\
&>\Big(1  -O(\varepsilon^{N-\frac{N-2}{2}(q+1)})  \Big)^{\frac{1}{2\cdot{2_{\alpha}^*}-2}}\\
&:=t_0>0.
\end{aligned}
\end{equation*}
Thus we have a lower and a upper bound for $t_\varepsilon$, independent of $\varepsilon$. We can argue as in the proof of Lemma \ref{Lem3.4} to conclude that
$$\begin{aligned}
0<\max_{t\geq0}\mathcal{I}(t\bar{u}_\varepsilon)&\leq \Big(\frac{1}{2}-\frac{1}{2\cdot{2_{\alpha}^*}} \Big) \Bigg( \frac{\int_\Omega|\nabla \bar{u}_\varepsilon|^2dx-\mu  \int_{\Omega} \frac{|\bar{u}_\varepsilon|^2}{|x|^2} dx}
{\Big(\int_\Omega\int_\Omega\frac{|\bar{u}_\varepsilon(x)|^{2_\alpha^*} |\bar{u}_\varepsilon|^{2_\alpha^*}}{|x-y|^\alpha}dxdy \Big)^{\frac{1}{2_\alpha^*}}}\Bigg)^{\frac{2_\alpha^*}{2_\alpha^*-1}}-\lambda C \varepsilon^{N-\frac{N-2}{2}(q+1)}\\
& \leq \frac{N-\alpha+2}{2(2N-\alpha)} \Bigg(\frac{S_{H,\mu}^{\frac{2N-\alpha}{N-\alpha+2}}+  O(\varepsilon^{(N-2)a})}
{\Big(S_{H,\mu}^{\frac{2N-\alpha}{N-\alpha+2}}- O(\varepsilon^\frac{(N-2)a}{2})\Big)^\frac{N-2}{2N-\alpha}}  \Bigg)^{\frac{2_\alpha^*}{2_\alpha^*-1}} -\lambda C \varepsilon^{N-\frac{N-2}{2}(q+1)}\\
& \leq \frac{N-\alpha+2}{2(2N-\alpha)} \Bigg(S_{H,\mu}+ O(\varepsilon^{(N-2)a})\Bigg)^{\frac{2N-\alpha}{N+\alpha-2}} -\lambda C \varepsilon^{N-\frac{N-2}{2}(q+1)}\\
& \leq \frac{N-\alpha+2}{2(2N-\alpha)}S_{H,\mu}^{\frac{2N-\alpha}{N+2-\alpha}} + O(\varepsilon^{(N-2)a})-\lambda C \varepsilon^{N-\frac{N-2}{2}(q+1)}\\
&<\frac{N-\alpha+2}{2(2N-\alpha)}S_{H,\mu}^{\frac{2N-\alpha}{N+2-\alpha}}.
\end{aligned}$$

Case II: $N\leq\frac{2(2a+q+1)}{2a+q-1}$.  In this case,  we have $N\geq \frac{N-2}{2}(1+a)(q+1)$ and $(N-2)a\leq N-\frac{N-2}{2}(q+1)$. Similar to Case I, for any fixed $\varepsilon$,
we can obtain that $\max\limits_{t>0} \mathcal{I}(t\bar{u}_\varepsilon)$ can be attained at some $t_{\varepsilon,\lambda}$ with
\begin{equation}\label{eq4.2}
\begin{aligned}
||\bar{u}_\varepsilon||_{\mu}=\lambda t^{q-1}_{\varepsilon,\lambda} \int_{\Omega}  |\bar{u}_\varepsilon|^{q+1}dx -t_{\varepsilon,\lambda}^{2\cdot{2_{\alpha}^*}-2}\int_{\Omega}\int_{\Omega} \frac{|\bar{u}_\varepsilon(x)|^{2_{\alpha}^*}|\bar{u}_\varepsilon(y)|^{2_{\alpha}^*} }{|x-y|^{\alpha}}dxdy.
\end{aligned}
\end{equation}
It follows from \eqref{eq4.2} that $t_{\varepsilon,\lambda}\to 0$  as $\lambda\to +\infty$. Thus
$$\max\limits_{t>0} \mathcal{I}(t\bar{u}_\varepsilon) =\mathcal{I}(t_{\varepsilon,\lambda}\bar{u}_\varepsilon)\leq  \frac{t_{\varepsilon,\lambda}^2}{2}||\bar{u}_\varepsilon||_{\mu} -\frac{t_{\varepsilon,\lambda}^{2\cdot{2_{\alpha}^*}}}{2\cdot{2_{\alpha}^*}}\int_{\Omega}\int_{\Omega} \frac{|\bar{u}_\varepsilon(x)|^{2_{\alpha}^*}|\bar{u}_\varepsilon(y)|^{2_{\alpha}^*} }{|x-y|^{\alpha}}dxdy\to 0~\text{as}~\lambda\to +\infty, $$
which yields that there exists $\lambda_0>0$ such that
$$\max\limits_{t>0} \mathcal{I}(t\bar{u}_\varepsilon) <\frac{ N+2-\alpha }{2(2N-\alpha)}S_{H,\mu}^{\frac{2N-\alpha}{N+2-\alpha}},~\text{for all}~\lambda>\lambda_0.$$
\end{proof}

\textbf{Proof of Theorem \ref{Thm1.3}:}   In order to finish the proof of Theorem \ref{Thm1.3}, we define
$$\Gamma:=\Big\{\gamma\in C([0,1], H_0^1(\Omega)): \gamma(0)=0~~\text{and}~~\mathcal{I}(\gamma(1))<0  \Big\}$$
and the minimax level
$$c=\inf_{\gamma\in \Gamma}\sup_{t\in[0,1]}\mathcal{I}(\gamma(t)).$$
Using Lemma \ref{Lem4.1}, we get that $c<\frac{N+2-\alpha}{2(2N-\alpha)}S_{H,\mu}^{\frac{2N-\alpha}{N+2-\alpha}}$, provided that
\begin{itemize}
  \item $N>\frac{2(2a+q+1)}{2a+q-1}$ and  $\lambda>0$ or
  \item $N<\frac{2(2a+q+1)}{2a+q-1}$ and  $\lambda>\lambda_0$, for $\lambda_0$ large enough.
\end{itemize}
Similarly to the proof of Theorem \ref{Thm1.2}, the equation \eqref{eq4.1} has a nontrivial solution.
\raisebox{-0.5mm}{\rule{2.5mm}{3mm}}\vspace{6pt}

\section{Nonlocal superlinear perturbation}
In this section we finally consider the following nonlocal perturbation,
\begin{equation}\label{eq5.1}
\left\{\begin{array}{ll}
-\Delta u -\mu\frac{u}{|x|^2}=
\Big( \displaystyle{\int_{\Omega}} \frac{|u(y)|^{2_{\alpha}^*} }{|x-y|^{\alpha}}dy\Big) |u|^{{2_{\alpha}^*}-1}+\lambda\Big( \displaystyle{\int_{\Omega}} \frac{|u(y)|^{p} }{|x-y|^{\alpha}}dy\Big) |u|^{p-1} ,     \,  &x\in  \Omega, \\
u(x) =0,    \, &  x\in \partial \Omega.
\end{array} \right.
\end{equation}
Nontrivial solutions of \eqref{eq5.1} are equivalent to nonzero critical points of the following energy functional
\begin{equation*}
\mathcal{J}(u) := \frac{1}{2}\|u\|_\mu^2 -\frac{1}{2\cdot{2_{\alpha}^*}}\int_{\Omega}\int_{\Omega} \frac{|u(x)|^{2_{\alpha}^*}|u(y)|^{2_{\alpha}^*} }{|x-y|^{\alpha}}dxdy -  \frac{\lambda}{2p} \int_{\Omega}\int_{\Omega} \frac{|u(x)|^{p}|u(y)|^{p} }{|x-y|^{\alpha}}dxdy.
\end{equation*}

Similarly as in the previous sections,  one can check that  $\mathcal{J}$ satisfies the Mountain Pass geometry and the $(PS)$ condition at any level $c$ such that $ c< \frac{N+2-\alpha}{2(2N-\alpha)}S_{H,\mu}^{\frac{2N-\alpha}{N+2-\alpha}}$, so we omit the details. Let us just point out that for a bounded sequence $\{u_n\}$ in $H_0^1(\Omega)$,
p to a subsequence, $u_n\rightharpoonup u_0$ in $H_0^1(\Omega)$, and then,
since $u_n\to u_0$ in $L^s(\Omega)$ for $s\in [1, 2^*)$,  we have that
$$ \int_{\Omega}\int_{\Omega} \frac{|u_n(x)|^{p}|u_n(y)|^{p} }{|x-y|^{\alpha}}dxdy \to \int_{\Omega}\int_{\Omega} \frac{|u_0(x)|^{p}|u_0(y)|^{p} }{|x-y|^{\alpha}}dxdy,~~~p\in(1, 2^*_\alpha-1).$$ We therefore turn to the following estimate.

\begin{Lem}\label{Lem5.1}
Let $0<\mu < \bar{\mu}$, then  there exists $\varepsilon>0$ small enough such that
$$\max_{t\geq0}\mathcal{J}(t\bar{u}_\varepsilon)<c< \frac{N+2-\alpha}{2(2N-\alpha)}S_{H,\mu}^{\frac{2N-\alpha}{N+2-\alpha}},$$
provided that
\begin{itemize}
  \item $N>\frac{2a-\alpha+2p}{a+p-2}$ and  $\lambda>0$, or
  \item $N\leq\frac{2a-\alpha+2p}{a+p-2}$ and  $\lambda$ is sufficiently large.
\end{itemize}
\end{Lem}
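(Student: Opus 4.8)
The plan is to run the scheme of Lemma \ref{Lem4.1}. As before (Lemmas \ref{Lem3.1}--\ref{Lem3.3}), using that $1<p<2_\alpha^*-1$ makes the perturbation term weakly continuous, one checks that $\mathcal J$ has the mountain-pass geometry and satisfies $(PS)_c$ for every $c<c^*:=\frac{N+2-\alpha}{2(2N-\alpha)}S_{H,\mu}^{\frac{2N-\alpha}{N+2-\alpha}}$, so the whole problem reduces to producing a ray along which the energy stays strictly below $c^*$, i.e.\ to proving $\max_{t\ge0}\mathcal J(t\bar u_\varepsilon)<c^*$ for some small $\varepsilon>0$; the minimax level $c$ then satisfies $c\le\max_{t\ge0}\mathcal J(t\bar u_\varepsilon)<c^*$, since $\{t\bar u_\varepsilon\}_{t\ge0}$, reparametrized on $[0,1]$ and stopped once $\mathcal J<0$, is an admissible path. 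The one genuinely new ingredient with respect to Lemma \ref{Lem4.1} is a sharp estimate, in powers of $\varepsilon$, of the nonlocal perturbation integral
\[
\mathcal N_p(\bar u_\varepsilon):=\int_\Omega\int_\Omega\frac{|\bar u_\varepsilon(x)|^p|\bar u_\varepsilon(y)|^p}{|x-y|^\alpha}\,dx\,dy,
\]
which plays here the role that \eqref{eq3.10} played in the local case.

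First I would prove this estimate. Recalling $\bar u_\varepsilon=\eta\,u_{\mu,\varepsilon}$ with $\eta=1$ on $B_\delta$, $\eta=0$ outside $B_{2\delta}$, $0\le\eta\le1$, and $u_{\mu,\varepsilon}(x)=\varepsilon^{\frac{2-N}{2}}u_\mu(x/\varepsilon)$, the change of variables $x=\varepsilon\xi$, $y=\varepsilon\zeta$ gives
\[
\mathcal N_p(\bar u_\varepsilon)=\varepsilon^{(2-N)p+2N-\alpha}\int_{B_{2\delta/\varepsilon}}\int_{B_{2\delta/\varepsilon}}\frac{\eta(\varepsilon\xi)^p\,\eta(\varepsilon\zeta)^p\,|u_\mu(\xi)|^p|u_\mu(\zeta)|^p}{|\xi-\zeta|^\alpha}\,d\xi\,d\zeta .
\]
For the lower bound I restrict the integration to $B_{\delta/\varepsilon}\times B_{\delta/\varepsilon}$ (where $\eta(\varepsilon\cdot)\equiv1$) and use the lower bound of \eqref{eq1.8}; for the upper bound I use $0\le\eta\le1$, extend to $B_{2\delta/\varepsilon}\times B_{2\delta/\varepsilon}$ and apply the Hardy--Littlewood--Sobolev inequality \eqref{eq2.1} with exponents $t=r=\tfrac{2N}{2N-\alpha}$ together with the pointwise bound $u_\mu(\xi)\lesssim\big(|\xi|^{1-a}+|\xi|^{1+a}\big)^{-\frac{N-2}{2}}$ from \eqref{eq1.8} (recall $\gamma'/\sqrt{\bar\mu}=1-a$, $\gamma/\sqrt{\bar\mu}=1+a$). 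Tracking the near-origin behaviour and the decay at infinity of $u_\mu$ as in the derivation of \eqref{eq3.10}, this yields a two-sided bound $c_1\varepsilon^{\theta}\le\mathcal N_p(\bar u_\varepsilon)\le c_2\varepsilon^{\theta}$ (with a logarithmic factor at the borderline), where $\theta=\theta(N,\alpha,p,\mu)$ is governed by whether the rescaled double integral converges or diverges as $\varepsilon\to0^+$, i.e.\ by the finiteness of $\int_{\mathbb R^N}\int_{\mathbb R^N}\frac{|u_\mu|^p|u_\mu|^p}{|x-y|^\alpha}$; the condition $N>\frac{2a-\alpha+2p}{a+p-2}$ is precisely the one ensuring $\theta<(N-2)a$, i.e.\ that the negative perturbation contribution beats the loss $O(\varepsilon^{(N-2)a})$ coming from the cut-off.

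With this estimate, the two cases proceed along the lines of Lemma \ref{Lem4.1}. In Case I ($N>\frac{2a-\alpha+2p}{a+p-2}$, $\lambda>0$), combining the lower bound on $\mathcal N_p(\bar u_\varepsilon)$ with \eqref{eq3.6} and \eqref{eq3.7}--\eqref{eq3.9} (and noting that the error $O(\varepsilon^{(2N-\alpha)a})$ in the lower bound of \eqref{eq3.9} is of higher order than $O(\varepsilon^{(N-2)a})$), one gets, for all $t\ge0$,
\[
\mathcal J(t\bar u_\varepsilon)\le\frac{t^2}{2}\Big(S_{H,\mu}^{\frac{2N-\alpha}{N-\alpha+2}}+O(\varepsilon^{(N-2)a})\Big)-\frac{\lambda c_1}{2p}\,t^{2p}\varepsilon^{\theta}-\frac{t^{2\cdot2_\alpha^*}}{2\cdot2_\alpha^*}\Big(S_{H,\mu}^{\frac{2N-\alpha}{N-\alpha+2}}-O(\varepsilon^{(2N-\alpha)a})\Big)=:h(t).
\]
Since $1<p<2_\alpha^*-1$ forces $2<2p<2\cdot2_\alpha^*$, $h$ attains its maximum at a unique $t_\varepsilon>0$, and its Euler equation shows $t_\varepsilon$ stays in a fixed compact subinterval of $(0,\infty)$ uniformly in $\varepsilon$ (if $t_\varepsilon\to0$ or $t_\varepsilon\to\infty$ along a sequence, one contradicts $S_{H,\mu}^{\frac{2N-\alpha}{N-\alpha+2}}\in(0,\infty)$). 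Dropping the negative middle term and maximizing the other two exactly as in Lemma \ref{Lem3.4}, while retaining a definite loss $-c\lambda\varepsilon^{\theta}$ from the perturbation at $t=t_\varepsilon$, gives
\[
\max_{t\ge0}\mathcal J(t\bar u_\varepsilon)\le c^*+O(\varepsilon^{(N-2)a})-c\lambda\varepsilon^{\theta}<c^*
\]
for $\varepsilon$ small, because $\theta<(N-2)a$ in this regime. In Case II ($N\le\frac{2a-\alpha+2p}{a+p-2}$, $\lambda$ large) I would instead fix any admissible small $\varepsilon$ — for which $\|\bar u_\varepsilon\|_\mu^2$ and all convolution integrals of $\bar u_\varepsilon$ are finite, $p<2_\alpha^*$ handling the singularity of $u_{\mu,\varepsilon}$ at the origin — and note that $t\mapsto\mathcal J(t\bar u_\varepsilon)$ is positive near $0$ and tends to $-\infty$, hence attains its maximum at some $t_{\varepsilon,\lambda}>0$ with
\[
\|\bar u_\varepsilon\|_\mu^2=\lambda\,t_{\varepsilon,\lambda}^{2p-2}\,\mathcal N_p(\bar u_\varepsilon)+t_{\varepsilon,\lambda}^{2\cdot2_\alpha^*-2}\int_\Omega\int_\Omega\frac{|\bar u_\varepsilon(x)|^{2_\alpha^*}|\bar u_\varepsilon(y)|^{2_\alpha^*}}{|x-y|^\alpha}\,dx\,dy .
\]
Since $2p-2>0$ this forces $t_{\varepsilon,\lambda}\to0$ as $\lambda\to+\infty$, so discarding the two nonpositive terms, $\max_{t\ge0}\mathcal J(t\bar u_\varepsilon)=\mathcal J(t_{\varepsilon,\lambda}\bar u_\varepsilon)\le\tfrac{1}{2}t_{\varepsilon,\lambda}^2\|\bar u_\varepsilon\|_\mu^2\to0$, and there is $\lambda_0>0$ with $\max_{t\ge0}\mathcal J(t\bar u_\varepsilon)<c^*$ for every $\lambda>\lambda_0$.

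I expect the main obstacle to be the first step. Unlike the single integral \eqref{eq3.10}, $\mathcal N_p(\bar u_\varepsilon)$ is doubly nonlocal, so obtaining matching two-sided bounds requires combining the Hardy--Littlewood--Sobolev inequality with the precise asymptotics \eqref{eq1.8} of the non-explicit extremal $u_\mu$, simultaneously controlling its near-origin singularity (of order $|x|^{-(1-a)\frac{N-2}{2}}$) and its decay at infinity (of order $|x|^{-(1+a)\frac{N-2}{2}}$); it is exactly the location of the convergence/divergence threshold of $\int_{\mathbb R^N}\int_{\mathbb R^N}\frac{|u_\mu|^p|u_\mu|^p}{|x-y|^\alpha}$ relative to the cut-off loss $O(\varepsilon^{(N-2)a})$ that produces the dichotomy in the statement.
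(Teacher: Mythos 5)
Your proposal is correct and follows the paper's strategy in all essentials. The core new ingredient — the lower bound
\[
\mathcal N_p(\bar u_\varepsilon)\ \ge\ c_1\,\varepsilon^{2N-\alpha-(N-2)p},
\]
obtained by restricting to $B_\delta\times B_\delta$ where $\eta\equiv1$, inserting the pointwise lower bound from \eqref{eq1.8}, and rescaling — is exactly what the paper computes for the quantity it calls $\mathbb D$, and your Case I (uniform two-sided control $t_0\le t_\varepsilon\le t_1$ followed by the elementary inequality and the observation that $N>\frac{2a-\alpha+2p}{a+p-2}$ is precisely $(N-2)a>2N-\alpha-(N-2)p$) matches the paper line by line. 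Your planned matching \emph{upper} bound on $\mathcal N_p(\bar u_\varepsilon)$ is superfluous: since the perturbation enters $\mathcal J$ with a minus sign only the lower bound is used, and the paper accordingly never derives the upper one. The only genuine divergence is in Case II. You fix $\varepsilon$, use the stationarity identity to conclude $t_{\varepsilon,\lambda}\to0$ as $\lambda\to+\infty$, and bound $\max_t\mathcal J(t\bar u_\varepsilon)=\mathcal J(t_{\varepsilon,\lambda}\bar u_\varepsilon)\le\tfrac12 t_{\varepsilon,\lambda}^2\|\bar u_\varepsilon\|_\mu^2\to0$; this is exactly the mechanism the paper uses in Lemma~\ref{Lem4.1} and it gives an $\varepsilon$-independent threshold $\lambda_0$. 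The paper's Lemma~\ref{Lem5.1}, by contrast, couples the parameters by taking $\lambda=\varepsilon^{-\theta}$ with $\theta>2N-\alpha-(N-2)p-(N-2)a$ directly in \eqref{eq5.2}. Both are valid; your version is arguably cleaner and more uniform across the two lemmas, while the paper's is a one-line substitution into the already-established inequality.
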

\begin{proof}
By the definition of $\bar{u}_\varepsilon$, we have
$$\begin{aligned}
\mathbb{D}=&\int_\Omega\int_\Omega\frac{|\bar{u}_\varepsilon(x)|^{p} |\bar{u}_\varepsilon(y)|^{p}}{|x-y|^\alpha}dxdy\\
& \geqslant\int_{B_\delta}\int_{B_\delta}\frac{|\bar{u}_\varepsilon(x) |^{p}|\bar{u}_\varepsilon(y)|^{p}}{|x-y|^\alpha}dxdy \\
&=\int_{B_\delta}\int_{B_\delta} \frac{|u_{\mu,\varepsilon}(x)|^{p}|u_{\mu,\varepsilon}(y) |^{p}}{|x-y|^\alpha}dxdy\\
&\geq\int_{B_\delta}\int_{B_\delta}\frac{C\varepsilon^{(2-N)p}}{ \big(|\frac{x}{\varepsilon}|^{1-a} +|\frac{x}{\varepsilon}|^{1+a} \big)^{\frac{(N-2)p}{2}}|x-y|^\alpha \big(|\frac{y}{\varepsilon}|^{1-a} +|\frac{y}{\varepsilon}|^{1+a} \big)^{\frac{(N-2)p}{2}}}dxdy\\
&= \int_{B_\delta}\int_{B_\delta}\frac{C\varepsilon^{2N-\alpha-(N-2)p}}{ \big(|\frac{x}{\varepsilon}|^{1-a} +|\frac{x}{\varepsilon}|^{1+a} \big)^{\frac{(N-2)p}{2}}|\frac{x}{\varepsilon}-\frac{y}{\varepsilon}|^\alpha \big(|\frac{y}{\varepsilon}|^{1-a} +|\frac{y}{\varepsilon}|^{1+a} \big)^{\frac{(N-2)p}{2}}}d\frac{x}{\varepsilon}d\frac{y}{\varepsilon} \\
&=C\varepsilon^{2N-\alpha-(N-2)p}\int_{B_\delta}\int_{B_\delta}\frac{1}{ \big(|x|^{1-a} +|x|^{1+a} \big)^{\frac{(N-2)p}{2}}|x-y|^\alpha \big(|y|^{1-a} +|y|^{1+a} \big)^{\frac{(N-2)p}{2}}}dxdy\\
&=O(\varepsilon^{2N-\alpha-(N-2)p}).
\end{aligned}$$
It follows from $p<2_\alpha^*$ that $2N-\alpha-(N-2)p>0$.
For any $t>0$,  we have
\begin{equation*}
\begin{aligned}
\mathcal{J}(t\bar{u}_\varepsilon)&= \frac{t^2}{2}|\bar{u}_\varepsilon |^2_\mu -\frac{t^{2\cdot{2_{\alpha}^*}}}{2\cdot{2_{\alpha}^*}}\int_{\Omega}\int_{\Omega} \frac{|\bar{u}_\varepsilon(x)|^{2_{\alpha}^*}|\bar{u}_\varepsilon(y)|^{2_{\alpha}^*} }{|x-y|^{\alpha}}dxdy  -  \frac{\lambda t^{2p}}{2p} \int_{\Omega}\int_{\Omega} \frac{|\bar{u}_\varepsilon(x)|^{p}|\bar{u}_\varepsilon(y)|^{p} }{|x-y|^{\alpha}}dxdy\\
&\leq\frac{t^2}{2}\Big(S_{H,\mu}^{\frac{2N-\alpha}{N-\alpha+2}}+O(\varepsilon^{(N-2)a})\Big) -\frac{t^{2\cdot{2_{\alpha}^*}}}{2\cdot{2_{\alpha}^*}}\Big( S_{H,\mu}^{\frac{2N-\alpha}{N-\alpha+2}}- O(\varepsilon^\frac{(N-2)a}{2} \Big) -\lambda t^{2p} O(\varepsilon^{2N-\alpha-(N-2)p}) \\
&=:\tilde{h}(t).
\end{aligned}
\end{equation*}
Since $1 < p <2_{\alpha}^*$, then it is easy to see that $h(0)=0$ and $\lim\limits_{t\to+\infty }h(t)=-\infty$. Thus there exists $t_\varepsilon>0$ such that $h(t_\varepsilon)=\max\limits_{t>0}f(t)$ and
\begin{equation*}
\begin{aligned}
t_\varepsilon\Big(S_{H,\mu}^{\frac{2N-\alpha}{N-\alpha+2}}+O(\varepsilon^{(N-2)a})\Big) -t^{2\cdot{2_{\alpha}^*}-1}\Big( S_{H,\mu}^{\frac{2N-\alpha}{N-\alpha+2}}- O(\varepsilon^\frac{(N-2)a}{2} \Big)-\lambda t^{2p-1} O(\varepsilon^{2N-\alpha-(N-2)p})  =0.
\end{aligned}
\end{equation*}
For $\varepsilon>$ sufficiently small,
\begin{equation*}
\begin{aligned}
t_\varepsilon&=\Bigg(\frac{S_{H,\mu}^{\frac{2N-\alpha}{N-\alpha+2}}+O(\varepsilon^{(N-2)a}) - \lambda t^{2p-1} O(\varepsilon^{2N-\alpha-(N-2)p})}{S_{H,\mu}^{\frac{2N-\alpha}{N-\alpha+2}}- O(\varepsilon^\frac{(N-2)a}{2})} \Bigg)^{\frac{1}{2\cdot{2_{\alpha}^*}-2}}\\
&\leq\Bigg(\frac{S_{H,\mu}^{\frac{2N-\alpha}{N-\alpha+2}}+O(\varepsilon^{(N-2)a}) }{S_{H,\mu}^{\frac{2N-\alpha}{N-\alpha+2}}- O(\varepsilon^\frac{(N-2)a}{2})} \Bigg)^{\frac{1}{2\cdot{2_{\alpha}^*}-2}}\\
&\leq \Big(1+O(\varepsilon^{(N-2)a})   \Big)^{\frac{1}{2\cdot{2_{\alpha}^*}-2}}:=t_1.
\end{aligned}
\end{equation*}
On the other hand, there exists $t_1 > 0$ independent of $\varepsilon$ such that, for $\varepsilon>$  small enough,
\begin{equation*}
\begin{aligned}
t_\varepsilon
&\geq \Bigg(\frac{S_{H,\mu}^{\frac{2N-\alpha}{N-\alpha+2}}+O(\varepsilon^{(N-2)a}) -O(\varepsilon^{2N-\alpha-(N-2)p }) }{S_{H,\mu}^{\frac{2N-\alpha}{N-\alpha+2}}- O(\varepsilon^\frac{(N-2)a}{2})} \Bigg)^{\frac{1}{2\cdot{2_{\alpha}^*}-2}}\\
&>\Big(1  -O(\varepsilon^{2N-\alpha-(N-2)p })  \Big)^{\frac{1}{2\cdot{2_{\alpha}^*}-2}} :=t_0>0.
\end{aligned}
\end{equation*}
Thus, $t_\varepsilon$ has a lower bound and a upper bound independent of $\varepsilon$.
Now, by the elementary inequality $(m+n)^\varrho<m^\varrho+ \varrho(m+n)^{\varrho-1}n$ for $m,n>0$ and $\varrho\geq1$, we obtain
\begin{equation}\label{eq5.2}
\begin{aligned}
0<\max_{t\geq0}\mathcal{J}(t\bar{u}_\varepsilon)&\leq \Big(\frac{1}{2}-\frac{1}{2\cdot{2_{\alpha}^*}} \Big) \Bigg( \frac{\int_\Omega|\nabla \bar{u}_\varepsilon|^2dx-\mu  \int_{\Omega} \frac{|\bar{u}_\varepsilon|^2}{|x|^2} dx}
{\Big(\int_\Omega\int_\Omega\frac{|\bar{u}_\varepsilon(x)|^{2_\alpha^*} |\bar{u}_\varepsilon|^{2_\alpha^*}}{|x-y|^\alpha}dxdy \Big)^{\frac{1}{2_\alpha^*}}}\Bigg)^{\frac{2_\alpha^*}{2_\alpha^*-1}}-\lambda O( \varepsilon^{2N-\alpha-(N-2)p})\\
& \leq \frac{N-\alpha+2}{2(2N-\alpha)} \Bigg(\frac{S_{H,\mu}^{\frac{2N-\alpha}{N-\alpha+2}}+  O(\varepsilon^{(N-2)a})}
{\Big(S_{H,\mu}^{\frac{2N-\alpha}{N-\alpha+2}}- O(\varepsilon^\frac{(N-2)a}{2})\Big)^\frac{N-2}{2N-\alpha}}  \Bigg)^{\frac{2_\alpha^*}{2_\alpha^*-1}} -\lambda O( \varepsilon^{2N-\alpha-(N-2)p})\\
& =\frac{N-\alpha+2}{2(2N-\alpha)} \Bigg(S_{H,\mu}+ O(\varepsilon^{(N-2)a})\Bigg)^{\frac{2N-\alpha}{N+2-\alpha}} -\lambda O( \varepsilon^{2N-\alpha-(N-2)p})\\
& \leq \frac{N-\alpha+2}{2(2N-\alpha)}S_{H,\mu}^{\frac{2N-\alpha}{N+2-\alpha}} + O(\varepsilon^{(N-2)a})-\lambda O( \varepsilon^{2N-\alpha-(N-2)p}).
\end{aligned}
\end{equation}
Now we distinguish the following cases:
\begin{itemize}
\item[(\romannumeral1)] In the case $N>\frac{2a-\alpha+2p}{a+p-2}$, by \eqref{eq5.2} we get
$$\max_{t\geq0}\mathcal{J}(t\bar{u}_\varepsilon)\leq \frac{N-\alpha+2}{2(2N-\alpha)}S_{H,\mu}^{\frac{2N-\alpha}{N+2-\alpha}} + O(\varepsilon^{(N-2)a})- O( \varepsilon^{2N-\alpha-(N-2)p}). $$
In view of $(N-2)a>2N-\alpha-(N-2)p>0$, we get the conclusion for $\varepsilon$ small enough.
\item[(\romannumeral2)] In the case $N\leq\frac{2a-\alpha+2p}{a+p-2}$, by \eqref{eq5.2} we have
$$\max_{t\geq0}\mathcal{J}(t\bar{u}_\varepsilon)\leq \frac{N-\alpha+2}{2(2N-\alpha)}S_{H,\mu}^{\frac{2N-\alpha}{N+2-\alpha}} + O(\varepsilon^{(N-2)a})-\lambda O( \varepsilon^{2N-\alpha-(N-2)p}). $$
for $\lambda=\varepsilon^{-\theta}$ with $\theta>2N-\alpha-(N-2)p-(N-2)a$, we also get the conclusion.
\end{itemize}
\end{proof}

\textbf{Proof of Theorem \ref{Thm1.4}:} We know that there exists a $(PS)_c$ sequence $\{u_n\}$ of $\mathcal{J}$  with $c< \frac{N+2-\alpha}{2(2N-\alpha)}S_{H,\mu}^{\frac{2N-\alpha}{N+2-\alpha}}$
provided that
\begin{itemize}
  \item $N>\frac{2a-\alpha+2p}{a+p-2}$ and  $\lambda>0$, or
  \item $N\leq\frac{2a-\alpha+2p}{a+p-2}$ and  $\lambda$ is sufficiently large.
\end{itemize}
Similarly as in the previous sections, this yields a nontrivial solution for equation \eqref{eq5.1}.
\raisebox{-0.5mm}{\rule{2.5mm}{3mm}}\vspace{6pt}

\section*{Acknowledgments}
This work was supported by Xingdian talent support program of Yunnan Province, National Natural Science Foundation of China (12261107, 12101546), Yunnan Fundamental Research Projects (202301AU070144, 202301AU070159), Scientific Research Fund of Yunnan Educational Commission (2023J0199, 2023Y0515). A.J. is partially supported by PRIN 2022 "\emph{Pattern formation in nonlinear phenomena}" and is a member of the INDAM Research Group GNAMPA.

\bibliographystyle{plain}
\bibliography{gu}

\begin{thebibliography}{10}

\bibitem{Bahrami2014}
M.~Bahrami, A.~Gro\ss~ardt, S.~Donadi, and A.~Bassi.
\newblock The {S}chr\"{o}dinger-{N}ewton equation and its foundations.
\newblock {\em New J. Phys.}, 16(November):115007, 17, 2014.

\bibitem{Brezis-Nirenberg1983CPAM}
H.~Br\'{e}zis and L.~Nirenberg.
\newblock Positive solutions of nonlinear elliptic equations involving critical
  {S}obolev exponents.
\newblock {\em Comm. Pure Appl. Math.}, 36(4):437--477, 1983.

\bibitem{Cao-Han2004JDE}
D.~Cao and P.~Han.
\newblock Solutions for semilinear elliptic equations with critical exponents
  and {H}ardy potential.
\newblock {\em J. Differential Equations}, 205(2):521--537, 2004.

\bibitem{Cao-Peng2003PAMS}
D.~Cao and S.~Peng.
\newblock A global compactness result for singular elliptic problems involving
  critical {S}obolev exponent.
\newblock {\em Proc. Amer. Math. Soc.}, 131(6):1857--1866, 2003.

\bibitem{Cao-Peng2003JDE}
D.~Cao and S.~Peng.
\newblock A note on the sign-changing solutions to elliptic problems with
  critical {S}obolev and {H}ardy terms.
\newblock {\em J. Differential Equations}, 193(2):424--434, 2003.

\bibitem{Cao-Yan2010CVPDE}
D.~Cao and S.~Yan.
\newblock Infinitely many solutions for an elliptic problem involving critical
  {S}obolev growth and {H}ardy potential.
\newblock {\em Calc. Var. Partial Differential Equations}, 38(3-4):471--501,
  2010.

\bibitem{Catrina-Wang2001}
F.~Catrina and Z.~Wang.
\newblock On the {C}affarelli-{K}ohn-{N}irenberg inequalities: sharp constants,
  existence (and nonexistence), and symmetry of extremal functions.
\newblock {\em Comm. Pure Appl. Math.}, 54(2):229--258, 2001.

\bibitem{Cerami-Solimini-Struwe1986JFA}
G.~Cerami, S.~Solimini, and M.~Struwe.
\newblock Some existence results for superlinear elliptic boundary value
  problems involving critical exponents.
\newblock {\em J. Funct. Anal.}, 69(3):289--306, 1986.

\bibitem{Cerami-Zhong-Zou2015CVPDE}
G.~Cerami, X.~Zhong, and W.~Zou.
\newblock On some nonlinear elliptic {PDE}s with {S}obolev-{H}ardy critical
  exponents and a {L}i-{L}in open problem.
\newblock {\em Calc. Var. Partial Differential Equations}, 54(2):1793--1829,
  2015.

\bibitem{Chen2003JDE}
J.~Chen.
\newblock Existence of solutions for a nonlinear {PDE} with an inverse square
  potential.
\newblock {\em J. Differential Equations}, 195(2):497--519, 2003.

\bibitem{Chen-Zou2012JDE}
Z.~Chen and W.~Zou.
\newblock On an elliptic problem with critical exponent and {H}ardy potential.
\newblock {\em J. Differential Equations}, 252(2):969--987, 2012.

\bibitem{Choquard-Stubbe-Vuffray2008DIE}
P.~Choquard, J.~Stubbe, and M.~Vuffray.
\newblock Stationary solutions of the {S}chr\"{o}dinger-{N}ewton model---an
  {ODE} approach.
\newblock {\em Differential Integral Equations}, 21(7-8):665--679, 2008.

\bibitem{Chou-Chu1993JLMS}
K.~Chou and C.~Chu.
\newblock On the best constant for a weighted {S}obolev-{H}ardy inequality.
\newblock {\em J. London Math. Soc. (2)}, 48(1):137--151, 1993.

\bibitem{Egnell1989IUMJ}
H.~Egnell.
\newblock Elliptic boundary value problems with singular coefficients and
  critical nonlinearities.
\newblock {\em Indiana Univ. Math. J.}, 38(2):235--251, 1989.

\bibitem{Ekeland-Ghoussoub2002BAMS}
I.~Ekeland and N.~Ghoussoub.
\newblock Selected new aspects of the calculus of variations in the large.
\newblock {\em Bull. Amer. Math. Soc. (N.S.)}, 39(2):207--265, 2002.

\bibitem{Felli-Marchini-Terracini2007JFA}
V.~Felli, E.~Marchini, and S~Terracini.
\newblock On {S}chr\"{o}dinger operators with multipolar inverse-square
  potentials.
\newblock {\em J. Funct. Anal.}, 250(2):265--316, 2007.

\bibitem{Ferrero-Gazzola2001JDE}
A.~Ferrero and F.~Gazzola.
\newblock Existence of solutions for singular critical growth semilinear
  elliptic equations.
\newblock {\em J. Differential Equations}, 177(2):494--522, 2001.

\bibitem{Frank-Land-Spector1971RMP}
W.~Frank, D.~Land, and R.~Spector.
\newblock Singular potentials.
\newblock {\em Rev. Modern Phys.}, 43(1):36--98, 1971.

\bibitem{Gao-Yang2017JMAA}
F.~Gao and M.~Yang.
\newblock On nonlocal {C}hoquard equations with {H}ardy-{L}ittlewood-{S}obolev
  critical exponents.
\newblock {\em J. Math. Anal. Appl.}, 448(2):1006--1041, 2017.

\bibitem{Gao-Yang2018SCM}
F.~Gao and M.~Yang.
\newblock The {B}rezis-{N}irenberg type critical problem for the nonlinear
  {C}hoquard equation.
\newblock {\em Sci. China Math.}, 61(7):1219--1242, 2018.

\bibitem{Garcia-Peral1998JDE}
J.~P. Garc\'{\i}a~Azorero and I.~Peral~Alonso.
\newblock Hardy inequalities and some critical elliptic and parabolic problems.
\newblock {\em J. Differential Equations}, 144(2):441--476, 1998.

\bibitem{Ghimenti-Liu-Tang2023}
M.~Ghimenti, M.~Liu, and Z.~Tang.
\newblock Multiple solutions for a fractional {C}hoquard problem with slightly
  subcritical exponents on bounded domains.
\newblock {\em NoDEA Nonlinear Differential Equations Appl.}, 30(2):Paper No.
  28, 27, 2023.

\bibitem{Ghimenti-Pagliardini2019CVPDE}
M.~Ghimenti and D.~Pagliardini.
\newblock Multiple positive solutions for a slightly subcritical {C}hoquard
  problem on bounded domains.
\newblock {\em Calc. Var. Partial Differential Equations}, 58(5):Paper No. 167,
  21, 2019.

\bibitem{Ghoussoub-Yuan2000}
N.~Ghoussoub and C.~Yuan.
\newblock Multiple solutions for quasi-linear {PDE}s involving the critical
  {S}obolev and {H}ardy exponents.
\newblock {\em Trans. Amer. Math. Soc.}, 352(12):5703--5743, 2000.

\bibitem{Guo-Tang2025arXiv}
T.~Guo and Tang X.
\newblock Existence and qualitative properties of solutions for a choquard-type
  equation with hardy potential.
\newblock {\em arXiv: 2312.11855}.

\bibitem{He2022JMAA}
R.~He.
\newblock Infinitely many solutions for the {B}r\'{e}zis-{N}irenberg problem
  with nonlinear {C}hoquard equations.
\newblock {\em J. Math. Anal. Appl.}, 515(2):Paper No. 126426, 24, 2022.

\bibitem{Ho-Perera-Sim2023}
K.~Ho, K.~Perera, and I.~Sim.
\newblock On the {B}rezis-{N}irenberg problem for the {$(p, q)$}-{L}aplacian.
\newblock {\em Ann. Mat. Pura Appl. (4)}, 202(4):1991--2005, 2023.

\bibitem{Jannelli1999JDE}
E.~Jannelli.
\newblock The role played by space dimension in elliptic critical problems.
\newblock {\em J. Differential Equations}, 156(2):407--426, 1999.

\bibitem{Landau-Lifshitz1985Book}
L.~Landau and E.~Lifshitz.
\newblock {\em Quantum mechanics: non-relativistic theory. {C}ourse of
  {T}heoretical {P}hysics, {V}ol. 3}.
\newblock Addison-Wesley Series in Advanced Physics. Pergamon Press, Ltd.,
  London-Paris; Addison-Wesley Publishing Company, Inc., Reading, MA, 1958.
\newblock Translated from the Russian by J. B. Sykes and J. S. Bell.

\bibitem{Li-Yang-Zhou2022SCM}
X.~Li, M.~Yang, and X.~Zhou.
\newblock Qualitative properties and classification of solutions to elliptic
  equations with {S}tein-{W}eiss type convolution part.
\newblock {\em Sci. China Math.}, 65(10):2123--2150, 2022.

\bibitem{Lieb1967SAM}
E.~Lieb.
\newblock Existence and uniqueness of the minimizing solution of {C}hoquard's
  nonlinear equation.
\newblock {\em Studies in Appl. Math.}, 57(2):93--105, 1976/77.

\bibitem{Lieb-Loss2001book}
E.~Lieb and M.~Loss.
\newblock {\em Analysis}, volume~14 of {\em Graduate Studies in Mathematics}.
\newblock American Mathematical Society, Providence, RI, second edition, 2001.

\bibitem{Moroz-Van-Schaftingen2013JFA}
V.~Moroz and J.~Van~Schaftingen.
\newblock Groundstates of nonlinear {C}hoquard equations: existence,
  qualitative properties and decay asymptotics.
\newblock {\em J. Funct. Anal.}, 265(2):153--184, 2013.

\bibitem{Pan-Wen-Yang}
K.~Pan, S.~Wen, and J.~Yang.
\newblock Qualitative analysis to an eigenvalue problem of the hartree type
  brr\'{e}zis-nirenberg problem.
\newblock {\em arXiv:2402.12934 [math.AP].}

\bibitem{Pekar1954}
S.~Pekar.
\newblock Untersuchungen \"{u}ber die elektronentheorie der kristalle.
\newblock {\em Akademie Verlag}, 1954.

\bibitem{Reed-Simon1978Book}
M.~Reed and B.~Simon.
\newblock {\em Methods of modern mathematical physics. {IV}. {A}nalysis of
  operators}.
\newblock Academic Press [Harcourt Brace Jovanovich, Publishers], New
  York-London, 1978.

\bibitem{Ruiz-Willem2003JDE}
D.~Ruiz and M.~Willem.
\newblock Elliptic problems with critical exponents and {H}ardy potentials.
\newblock {\em J. Differential Equations}, 190(2):524--538, 2003.

\bibitem{Tod-Moroz1999N}
P.~Tod and I.~Moroz.
\newblock An analytical approach to the {S}chr\"{o}dinger-{N}ewton equations.
\newblock {\em Nonlinearity}, 12(2):201--216, 1999.

\bibitem{Willem1996}
M.~Willem.
\newblock {\em Minimax theorems}, volume~24 of {\em Progress in Nonlinear
  Differential Equations and their Applications}.
\newblock Birkh\"auser Boston, Inc., Boston, MA, 1996.

\bibitem{Yang-Ye-Zhao2023JDE}
M.~Yang, W.~Ye, and S.~Zhao.
\newblock Existence of concentrating solutions of the {H}artree type
  {B}r\'{e}ezis-{N}irenberg problem.
\newblock {\em J. Differential Equations}, 344:260--324, 2023.

\end{thebibliography}

\end{document}